\numberwithin{equation}{section}
\theoremstyle{plain}
\newtheorem{theorem}{Theorem}
\newtheorem{lemma}[theorem]{Lemma}
\newtheorem{proposition}[theorem]{Proposition}
\newtheorem{definition}[theorem]{Definition}
\numberwithin{equation}{section} \numberwithin{theorem}{section}
\newproof{pot}{{\bf{Proof of Theorem \ref{Thm1.1}}\rm}}
\newenvironment{remark}[1][Remark:]{\begin{trivlist}
		\item[\hskip \labelsep {\bfseries #1}]}{\end{trivlist}}
\begin{document}
	\begin{frontmatter}
		\author{Xiaojun Chang\corref{cor2}}
		\ead{changxj100@nenu.edu.cn}
		\cortext[cor2]{Corresponding author.}
		\address{School of Mathematics and Statistics \& Center for Mathematics and Interdisciplinary Sciences,\\
			Northeast Normal University, Changchun 130024, Jilin,
			China}

        \author{Hichem Hajaiej}
        \ead{hhajaie@calstatela.edu}
		\address{Department of Mathematics, Cal State LA, Los Angeles CA 90032, USA}

		\author{Zhouji Ma}
		\ead{mazj588@nenu.edu.cn}
		\address{School of Mathematics and Statistics \& Center for Mathematics and Interdisciplinary Sciences,\\
			Northeast Normal University, Changchun 130024, Jilin,
			China}

		\author{Linjie Song}
		\ead{songlinjie18@mails.ucas.edu.cn}
		\address{Institute of Mathematics, AMSS, Chinese Academy of Science, Beijing 100190, China\\
			University of Chinese Academy of Science, Beijing 100049, China}

\title{Existence and instability of standing waves for the biharmonic nonlinear Schr\"odinger equation with combined nonlinearities}

\begin{abstract}
 We consider the following biharmonic nonlinear Schr\"odinger equation with combined nonlinearities
\begin{equation*}
\begin{aligned}
i\partial_{t}\psi-\Delta^{2}\psi+\mu|\psi|^{q-2}\psi+|\psi|^{p-2}\psi=0, \qquad(t,x)\in\mathbb{R}\times\mathbb{R}^{N},
\end{aligned}
\end{equation*}
where $N\geq5,\ \psi\in\mathbb{R}\times\mathbb{R}^{N}\rightarrow\mathbb{C},\ \mu>0,\ 2+\frac{8}{N}\leq q<p\leq4^*$ and $4^*=\frac{2N}{N-4}$. We prove the existence of normalized ground state solutions for the biharmonic Schr\"odinger equation with combined nonlinearities and show that all ground states correspond to the local minima of the associated energy functional restricted to the Pohozaev set. Moreover, we prove that the standing waves are strongly unstable by blowup. In particular, our results cover the Sobolev case $p=4^*=\frac{2N}{N-4}$. Our method is novel and innovative as previous ideas cannot be used in many cases under this study.
\end{abstract}
\begin{keyword}
Biharmonic Schr\"odinger equation; Normalized ground state solutions;  Combined nonlinearities; Instability.
\end{keyword}
\end{frontmatter}

\section{Introduction and main results}\label{intro}
In the last decades, there has been a growing interest in the study of the biharmonic Schr\"odinger equation
\begin{equation}\label{eq000}
i\partial_{t}\psi+\gamma\Delta^{2}\psi+\epsilon\Delta\psi+f(|\psi|)\psi=0,
\end{equation}
where $N\geq5,\gamma\not=0,\ \epsilon=\pm1$ or $\epsilon=0$, $\psi:\mathbb{R}\times\mathbb{R}^{N}\rightarrow\mathbb{C}$. In \cite{K1996, KS2000}, the biharmonic Schr\"odinger equation with mixed-dispersion term was introduced by Karpman and Shagalov to  capture the effects of small fourth-order dispersion terms in the propagation of intense laser beams in a bulk medium with Kerr nonlinearity. Ben-Artzi et al. \cite{Hbe} obtained necessary Strichartz estimates for the biharmonic operator $\Delta^2$. Here, we are particularly interested in the case $\epsilon=0$. This kind of biharmonic Schr\"odinger equations was first introduced in \cite{IK1983,T1985} to study the stability of solitons in magnetic materials once the effective quasi particle mass becomes infinite. Fibich, Ilan and Papanicolaou \cite{FIB} described some properties of the biharmonic Schr\"odinger equation in the subcritical case. Pausader \cite{PAU2} proved the global well-posedness and scattering theory under some conditions for the mass critical biharmonic Schr\"odinger equation. The mass supercritical and Sobolev subcritical regime were considered in \cite{GUO}. For the global well-posedness and scattering of the focusing Sobolev critical biharmonic Schr\"odinger equations, one can refer to \cite{MIA} and \cite{PAU}. For more related studies, the reader can see \cite{GCH, MZJ, Phan2018, ZJY, ZHU} for the biharmonic Schr\"odinger equation, and \cite{GOU, GOUL, DJSN, DBO, BFJ2019, FER, FIB} for the biharmonic Schr\"odinger equation with mixed-dispersion term.

In this paper, we are concerned with the existence and orbital instability of ground state solutions having prescribed mass, also called normalized ground state solutions for the following biharmonic nonlinear Schr\"odinger equation with combined nonlinearities
\begin{equation}\label{eq01}
i\partial_{t}\psi-\Delta^{2}\psi+\mu|\psi|^{q-2}\psi+|\psi|^{p-2}\psi=0, \quad\psi(0,x)=\psi_{0}\in H^2(\mathbb{R}^N),\quad(t,x)\in\mathbb{R}\times\mathbb{R}^{N},
\end{equation}
where $N\geq5,\ \psi:\mathbb{R}\times\mathbb{R}^{N}\rightarrow\mathbb{C},\ \mu>0,\ 2+\frac{8}{N} \leq q < p \leq 4^*$ and $4^* = \frac{2N}{N-4}$.

 Firstly, we recall that the standing waves of \eqref{eq01}, are solutions of the form $\psi(t,x)=e^{i\omega t}u(x),\ \omega\in\mathbb{R}$. Then the ansatz yields the elliptic equation
 \begin{equation}\label{eq02}
    \Delta^{2}u+\omega u-\mu|u|^{q-2}u-|u|^{p-2}u=0,\quad\mathrm{in}\ \mathbb{R}^{N}.
\end{equation}
While looking for solutions to \eqref{eq02}, one reasonable approach is to fix $\omega\in\mathbb{R}$ and search for the critical points of the following action functional
$$A_{p,q}(u)=\frac{1}{2}\|\Delta u\|_{2}^{2}+\frac{\omega}{2}\|u\|_{2}^{2}-\frac{\mu}{q}\|u\|_{q}^{q}-\frac{1}{p}\|u\|_{p}^{p}.$$
 Alternatively, we can search for the solutions to \eqref{eq02} with prescribed mass, which is a meaningful problem from both mathematical and physical perspectives. Indeed, the mass of a quantum system is a fundamental property that is related to the probability density of the system. In quantum mechanics, the squared modulus of the wave function represents the probability density of finding the system in a particular state. For normalized solutions, the total probability of finding the system in any state is conserved over time, as the mass is conserved.
By considering solutions to the biharmonic Schr\"odinger equation with prescribed mass, we can gain insights into the properties of the system in a consistent and meaningful way. Furthermore, studying the solutions to the biharmonic Schr\"odinger equation with prescribed mass
  can lead to interesting mathematical challenges and the discovery of new tools and insights.

We define on $H^{2}=H^{2}(\mathbb{R}^{N},\mathbb{C})$  the energy functional
$$E_{p,q}(u):=\frac{1}{2}\|\Delta u\|_{2}^{2}-\frac{\mu}{q}\|u\|_{q}^{q}-\frac{1}{p}\|u\|_{p}^{p}.$$
It is standard to check that $E_{p,q}$ is of class $C^{1}$ and that a critical point of $E_{p,q}$ restricted to the mass constraint
$$S(c)=\{u\in H^{2}:\|u\|_{2}^{2}=c\}$$
corresponds to a solution of \eqref{eq02} with prescribed mass $\|u\|_{2}^{2}=c$. Here, the parameter $\omega\in\mathbb{R}$ is unknown and it appears as a Lagrange multiplier. In this paper, we are interested in the solutions that minimize $E_{p,q}(u)$ among all non-trivial solutions, namely ground state solutions. We define it in the following way:
\begin{definition}\label{blp}
We say that a solution $u_{c}\in S(c)$ to \eqref{eq02} is a ground state solution to \eqref{eq02} if it is a solution with the least energy among all the solutions that belong to $S(c)$. Namely, it satisfies
$$E_{p,q}(u_{c})=\inf\{E_{p,q}(u), u\in S(c),(E_{p,q}|_{S(c)})^{'}(u)=0\}.$$
\end{definition}

In recent years, the study of the normalized ground state solutions of nonlinear Schr\"odinger type equations has been a hot topic. Particularly, there have been several results for the mixed-dispersion fourth-order Schr\"odinger equations. However, there are few references to the corresponding study of biharmonic Schr\"odinger equation with mixed nonlinearities in the literature. Recently, in \cite{MZJ}, the authors proved the existence of the normalized ground state solutions to \eqref{eq01} for $2<q<2+\frac{8}{N}<p=4^*$. In this paper, we will study the normalized ground state solutions to \eqref{eq01} in the remaining regimes.

Under the assumption $2+\frac{8}{N}\leq q<p\leq4^*$, $E_{p,q}$ restricted to $S(c)$ is unbounded from below. To overcome this difficulty, we construct a subset of $S(c)$ on which $E_{p,q}$ is bounded from below and coercive. We exploit a natural constraint called Pohozaev set of $E_{p,q}$ restricted to $S(c)$, in which all the critical points of $E_{p,q}|_{S(c)}$ belong to this Pohozaev set. The set is defined as follows
$$\mathcal{Q}_{p,q}(c)=\{u\in S(c):Q_{p,q}(u)=0\},$$
 where
$$Q_{p,q}(u)=\|\Delta u\|_{2}^{2}-\mu\gamma_{q}\|u\|_{q}^{q}-\gamma_{p}\|u\|_{p}^{p}.$$
with
$$\gamma_{r}=\frac{N(r-2)}{4r}=\frac{N}{2}(\frac{1}{2}-\frac{1}{r}), \forall r \in (2,4^*].$$
Hence it is natural to consider the minimization problem on this Pohozaev set
$$m_{p,q}(c)=\inf\limits_{u\in\mathcal{Q}_{p,q}(c)}E_{p,q}(u).$$

Then we state the following existence results.
\begin{theorem}\label{Thm1.1}
Let $N\geq5$, $c>0$, $\mu>0$, $2+\frac{8}{N}\leq q<p<4^*$. If $q=2+\frac{8}{N}$, we further assume that $\mu c^{\frac{4}{N}}<\frac{N+4}{NC^q_{N,q}}$. Then $E_{p,q}|_{S(c)}$ has a critical point $u$ at a positive level $E_{p,q}(u)>0$ with the following properties: $u$ solves \eqref{eq02} for some $\omega>0$, and is a normalized ground state of \eqref{eq02} on $S(c)$.
\end{theorem}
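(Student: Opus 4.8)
The plan is to realize $m_{p,q}(c)$ as a minimax value over the mass sphere and to produce a Palais--Smale sequence lying asymptotically on the Pohozaev set, then to recover compactness.

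\textbf{Step 1 (Fiber maps and the geometry of $\mathcal Q_{p,q}(c)$).} For $u\in S(c)$ and $t>0$ I would use the $L^2$-preserving dilation $(t\star u)(x)=t^{N/2}u(tx)$, which keeps $\|t\star u\|_2^2=c$ while $\|\Delta(t\star u)\|_2^2=t^4\|\Delta u\|_2^2$ and $\|t\star u\|_r^r=t^{N(r-2)/2}\|u\|_r^r$. Writing $\psi_u(t):=E_{p,q}(t\star u)$, one computes $t\psi_u'(t)=2\,Q_{p,q}(t\star u)$, so the zeros of $\psi_u'$ are exactly the dilations that land on $\mathcal Q_{p,q}(c)$. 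Since the exponents satisfy $4\le N(q-2)/2<N(p-2)/2$, with the first inequality strict unless $q=2+\tfrac8N$, the map $\psi_u$ is positive and increasing near $t=0$ and tends to $-\infty$ as $t\to\infty$; dividing $\psi_u'(t)=0$ by $t^3$ leaves an equation whose right-hand side is strictly increasing, hence a \emph{unique} critical point $t(u)$, which is the global maximum. In the borderline case $q=2+\tfrac8N$ the coefficient of $t^4$ equals $\tfrac12\|\Delta u\|_2^2-\tfrac\mu q\|u\|_q^q$, and the Gagliardo--Nirenberg inequality $\|u\|_q^q\le C^q_{N,q}c^{4/N}\|\Delta u\|_2^2$ together with $\mu c^{4/N}<\frac{N+4}{NC^q_{N,q}}$ keeps this coefficient positive, so the same picture persists. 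This yields a continuous map $u\mapsto t(u)$ and the characterization $m_{p,q}(c)=\inf_{u\in S(c)}\max_{t>0}E_{p,q}(t\star u)$.

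\textbf{Step 2 (Coercivity and $m_{p,q}(c)>0$).} On $\mathcal Q_{p,q}(c)$ I substitute $\|\Delta u\|_2^2=\mu\gamma_q\|u\|_q^q+\gamma_p\|u\|_p^p$ into $E_{p,q}$ to get $E_{p,q}(u)=\mu(\tfrac{\gamma_q}{2}-\tfrac1q)\|u\|_q^q+(\tfrac{\gamma_p}{2}-\tfrac1p)\|u\|_p^p$, where both coefficients are nonnegative and the $p$-coefficient is strictly positive since $p\gamma_p>2$. Using the free identity $E_{p,q}=E_{p,q}-\tfrac1{q\gamma_q}Q_{p,q}$ on $\mathcal Q_{p,q}(c)$ (and the smallness hypothesis in the critical case $q\gamma_q=2$), I would establish $E_{p,q}(u)\ge\delta\|\Delta u\|_2^2$, giving coercivity of $E_{p,q}$ on the Pohozaev set; moreover, plugging the Gagliardo--Nirenberg bounds (whose $\|\Delta u\|_2$-exponents $q\gamma_q\ge2$ and $p\gamma_p>2$) into $Q_{p,q}(u)=0$ forces a uniform lower bound $\|\Delta u\|_2\ge\rho>0$, whence $m_{p,q}(c)>0$.

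\textbf{Step 3 (A good Palais--Smale sequence).} I would check that $\mathcal Q_{p,q}(c)$ is a natural constraint: the non-degeneracy $\psi_u''(t(u))\ne0$ forces the multiplier attached to $Q_{p,q}$ to vanish, so constrained critical points of $E_{p,q}|_{\mathcal Q_{p,q}(c)}$ solve \eqref{eq02}. Applying Ekeland's principle to the minimax of Step 1, in the spirit of Jeanjean's monotonicity trick, I would extract $\{u_n\}\subset S(c)$ with $E_{p,q}(u_n)\to m_{p,q}(c)$, $Q_{p,q}(u_n)\to0$ and $(E_{p,q}|_{S(c)})'(u_n)\to0$; coercivity makes it bounded in $H^2$. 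Testing the equation with $u_n$ and combining with $Q_{p,q}(u_n)\to0$ gives multipliers $\omega_n$ with $\omega_n c=\mu(1-\gamma_q)\|u_n\|_q^q+(1-\gamma_p)\|u_n\|_p^p+o(1)$; since $\gamma_q,\gamma_p<1$ and $m_{p,q}(c)>0$ keeps $\|u_n\|_q^q+\|u_n\|_p^p$ away from $0$, one gets $\omega_n\to\omega>0$.

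\textbf{Step 4 (Compactness --- the main obstacle).} This is where I expect the real difficulty. For the biharmonic operator Schwarz symmetrization does not decrease $\|\Delta u\|_2$, so I cannot simply pass to radially decreasing minimizers, and the translation invariance of $\mathbb R^N$ must be dealt with directly. I would run concentration--compactness on $\{u_n\}$: positivity of $m_{p,q}(c)$ keeps $\|u_n\|_q^q+\|u_n\|_p^p$ bounded below, excluding vanishing, so modulo translations $u_n(\cdot-y_n)\rightharpoonup u\ne0$, and $u$ solves \eqref{eq02} with multiplier $\omega>0$, whence $Q_{p,q}(u)=0$ by the Pohozaev identity. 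The decisive point is to rule out dichotomy, i.e.\ to prove $\|u\|_2^2=c$. Setting $v_n=u_n(\cdot-y_n)-u$, the Brezis--Lieb lemma splits mass, energy and $Q_{p,q}$; if $\|v_n\|_2^2\to c-c'>0$ then $u\in\mathcal Q_{p,q}(c')$ gives $E_{p,q}(u)\ge m_{p,q}(c')$, while $\liminf E_{p,q}(v_n)\ge0$ and lower semicontinuity yield $m_{p,q}(c)\ge m_{p,q}(c')$, which I would contradict by establishing the strict monotonicity $m_{p,q}(c')>m_{p,q}(c)$ for $c'<c$. Once dichotomy is excluded, $\|v_n\|_2\to0$, and interpolation against the bounded $L^{4^*}$ norm forces $\|v_n\|_q,\|v_n\|_p\to0$ (here $p<4^*$ is essential); feeding this into $Q_{p,q}(v_n)\to0$ gives $\|\Delta v_n\|_2\to0$, so $u_n\to u$ strongly in $H^2$. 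Then $u\in\mathcal Q_{p,q}(c)$ with $E_{p,q}(u)=m_{p,q}(c)$, and since every solution on $S(c)$ lies in $\mathcal Q_{p,q}(c)$ and hence has energy $\ge m_{p,q}(c)$, the function $u$ is the desired normalized ground state, with $\omega>0$ and $E_{p,q}(u)=m_{p,q}(c)>0$.
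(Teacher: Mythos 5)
Your proposal follows the same overall architecture as the paper's proof: the Pohozaev set $\mathcal{Q}_{p,q}(c)$, the fiber-map analysis giving a unique maximizer $t(u)$ and the characterization $m_{p,q}(c)=\inf_{u\in S(c)}\max_{t>0}E_{p,q}(t\star u)$ (your dilation $t^{N/2}u(tx)$ is just the paper's $u_t=t^{N/4}u(\sqrt{t}x)$ reparametrized), coercivity and positivity of the level, a Palais--Smale sequence asymptotically on the Pohozaev set, and concentration--compactness combined with monotonicity of $c\mapsto m_{p,q}(c)$ to exclude dichotomy. One implementation difference: for the Palais--Smale sequence the paper does not use Ekeland's principle with Jeanjean's monotonicity trick; it builds the reduced functional $\Psi(u)=E_{p,q}(u_{t_u})$, proves via the implicit function theorem that $u\mapsto t_u$ (hence $\Psi$) is $C^1$, and applies Ghoussoub's minimax principle for homotopy-stable families to $I=\Psi|_{S(c)}$, producing a Palais--Smale sequence lying exactly in $\mathcal{Q}_{p,q}(c)$. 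Your route is a standard alternative and would equally deliver boundedness, $Q_{p,q}(u_n)\to0$, and the positive limit multiplier, so nothing essential is lost there.

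The one genuine soft spot is in your Step 4, where you propose to rule out dichotomy "by establishing the strict monotonicity $m_{p,q}(c')>m_{p,q}(c)$ for $c'<c$". The paper never proves unconditional strict monotonicity: the mass-rescaling $v(x)=(c_1/c_2)^{(N-4)/8}u((c_1/c_2)^{1/4}x)$ gives a strict inequality for each fixed $u$, but strictness is lost on taking the infimum, so only the non-increasing property is obtained this way. What the paper proves instead is a conditional statement: if $m_{p,q}(c_1)$ is attained by a function which in addition solves \eqref{eq02} (necessarily with $\omega>0$), then $c\mapsto m_{p,q}(c)$ is strictly decreasing in a right neighborhood of $c_1$; this uses the two-parameter scaling $u_{\lambda,t}=t^{N/4}\sqrt{\lambda}\,u(\sqrt{t}x)$ together with the implicit function theorem, and the sign $\omega>0$ is what makes the energy strictly decrease along the admissible deformation. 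In your argument this conditional version is exactly what is needed and is in fact available: the Brezis--Lieb splitting plus the non-strict monotonicity force $E_{p,q}(u)=m_{p,q}(c')$, i.e. attainment at $c'$ by a solution, after which the conditional lemma yields the contradiction with non-increasingness. You should make this step explicit; as written, the blanket strict monotonicity is an unsupported claim (it can be proved directly, but that requires uniform positive lower bounds on the fiber parameter $t_v$ and on $\|u\|_q^q+\|u\|_p^p$ along minimizing sequences, neither of which appears in your sketch).
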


\begin{theorem}\label{Thm1.2}
	Let $N\geq5$, $c>0$, $\mu>0$ and $2+\frac{8}{N}\leq q < 4^*$. If $q=2+\frac{8}{N}$, we further assume that $\mu c^{\frac{4}{N}}<\frac{N+4}{NC^q_{N,q}}$. If $N = 5$, we further assume that $q > \frac{22}{3}$. Then $E_{4^*,q}|_{S(c)}$ has a critical point $u$ at a level $E_{4^*,q}(u) \in (0,\frac{2}{N}\mathcal{S}^{\frac{N}{4}})$ with the following properties: $u$ solves \eqref{eq02} for some $\omega>0$, and is a normalized ground state of \eqref{eq02} on $S(c)$ for $p = 4^*$.
\end{theorem}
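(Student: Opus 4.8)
The plan is to realize the ground state as a minimizer of $E_{4^*,q}$ over the Pohozaev set $\mathcal{Q}_{4^*,q}(c)$ and to show that the resulting level $m_{4^*,q}(c)$ lies strictly below the compactness threshold $\frac{2}{N}\mathcal{S}^{N/4}$, where $\mathcal{S}$ is the best constant in the biharmonic Sobolev inequality $\mathcal{S}\|u\|_{4^*}^2\le\|\Delta u\|_2^2$. I would begin with a fibering analysis. For $u\in S(c)$ put $u_t(x)=t^{N/2}u(tx)$, so that $\|\Delta u_t\|_2^2=t^4\|\Delta u\|_2^2$, $\|u_t\|_r^r=t^{N(r-2)/2}\|u\|_r^r$, and $g_u(t):=E_{4^*,q}(u_t)$ satisfies $g_u'(t)=\frac{2}{t}Q_{4^*,q}(u_t)$. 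Since $q\ge 2+\frac{8}{N}$ forces every nonlinear exponent to be at least the kinetic exponent $4$ (with the critical term strictly larger, as $\gamma_{4^*}=1$), $g_u$ increases from $0$ and tends to $-\infty$, hence has a unique strict maximum $t_u$ with $u_{t_u}\in\mathcal{Q}_{4^*,q}(c)$; at the endpoint $q=2+\frac{8}{N}$ the $L^q$-term also scales like $t^4$ and positivity of the leading coefficient is exactly the assumption $\mu c^{4/N}<\frac{N+4}{NC_{N,q}^q}$ via Gagliardo--Nirenberg. This makes $\mathcal{Q}_{4^*,q}(c)$ a natural constraint. Eliminating the kinetic term through $Q_{4^*,q}(u)=0$ gives $E_{4^*,q}(u)=\mu\frac{N(q-2)-8}{8q}\|u\|_q^q+\frac{2}{N}\|u\|_{4^*}^{4^*}\ge 0$, and inserting the Sobolev and Gagliardo--Nirenberg bounds into $Q_{4^*,q}(u)=0$ forces $\|\Delta u\|_2\ge\delta>0$ on the manifold (the mass condition being precisely what secures this at $q=2+\frac{8}{N}$). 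This yields coercivity of $E_{4^*,q}$ on $\mathcal{Q}_{4^*,q}(c)$ and $m_{4^*,q}(c)>0$.

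The decisive step, and the one I expect to be the main obstacle, is the strict inequality $m_{4^*,q}(c)<\frac{2}{N}\mathcal{S}^{N/4}$. I would test with truncations $u_\varepsilon=\eta\,U_\varepsilon$ of the rescaled extremals $U_\varepsilon(x)=\varepsilon^{-(N-4)/2}U(x/\varepsilon)$, where $U$ realizes $\mathcal{S}$ and decays like $|x|^{-(N-4)}$. After renormalizing to mass $c$ and projecting onto $\mathcal{Q}_{4^*,q}(c)$ by the fibering map, the standard expansions $\|\Delta u_\varepsilon\|_2^2=\mathcal{S}^{N/4}+O(\varepsilon^{N-4})$ and $\|u_\varepsilon\|_{4^*}^{4^*}=\mathcal{S}^{N/4}+O(\varepsilon^{N})$ show that the pure-critical part of the energy equals $\frac{2}{N}\mathcal{S}^{N/4}$ up to errors, while the focusing term $-\frac{\mu}{q}\|u_\varepsilon\|_q^q$ produces a strictly negative correction of order $\varepsilon^{N-(N-4)q/2}$ that dominates the truncation error for small $\varepsilon$. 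The obstruction is genuinely dimensional: for $N=5$ one has $U^2\notin L^1$, so $\|u_\varepsilon\|_2^2$ concentrates only like $\varepsilon$ and the renormalization to $S(c)$ perturbs every $\varepsilon$-power; the balance then succeeds only when the subcritical exponent is large enough, which is exactly the hypothesis $q>\frac{22}{3}$. This comparison of $\varepsilon$-powers under the $L^2$-normalization is where the bulk of the technical work lies.

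For existence at the level $m_{4^*,q}(c)$ I would construct a Palais--Smale sequence $(u_n)\subset\mathcal{Q}_{4^*,q}(c)$ for $E_{4^*,q}|_{S(c)}$ at this level, for instance by applying Ekeland's principle to the reduced functional $u\mapsto E_{4^*,q}(u_{t_u})$ on $S(c)$ and transporting it back, using that $\mathcal{Q}_{4^*,q}(c)$ is a natural constraint. Coercivity gives boundedness in $H^2$, so $u_n\rightharpoonup u$ and the Lagrange multipliers $\omega_n\to\omega$. Combining the Nehari relation $\|\Delta u\|_2^2+\omega\|u\|_2^2=\mu\|u\|_q^q+\|u\|_{4^*}^{4^*}$ with $Q_{4^*,q}(u)=0$ gives $\omega\|u\|_2^2=\mu(1-\gamma_q)\|u\|_q^q$, whence $\omega>0$ as soon as $u\neq0$. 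Writing $u_n=u+w_n$ with $w_n\rightharpoonup0$, the Brezis--Lieb lemma splits the critical and kinetic norms; a persisting critical bubble would satisfy $\|\Delta w_n\|_2^2=\|w_n\|_{4^*}^{4^*}+o(1)$ and, by $\mathcal{S}$, carry a quantum $\ge\mathcal{S}^{N/4}$ of kinetic energy, forcing $m_{4^*,q}(c)\ge E_{4^*,q}(u)+\frac{2}{N}\mathcal{S}^{N/4}\ge\frac{2}{N}\mathcal{S}^{N/4}$ and contradicting the previous step; the same threshold argument (applied when $\|u_n\|_q^q\to0$) excludes $u=0$, with a translation/profile decomposition absorbing the subcritical term. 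Hence $w_n\to0$ strongly, $u_n\to u$ in $H^2$, $u\in S(c)$ and $E_{4^*,q}(u)=m_{4^*,q}(c)\in(0,\frac{2}{N}\mathcal{S}^{N/4})$. Since every critical point of $E_{4^*,q}|_{S(c)}$ lies in $\mathcal{Q}_{4^*,q}(c)$ by the Pohozaev identity, minimality over the whole manifold certifies $u$ as a normalized ground state with $\omega>0$.
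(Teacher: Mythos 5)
Your strategy is the same as the paper's: minimize $E_{4^*,q}$ on $\mathcal{Q}_{4^*,q}(c)$, prove $0<m_{4^*,q}(c)<\frac{2}{N}\mathcal{S}^{N/4}$ with mass-normalized truncated extremals, build a Palais--Smale sequence inside $\mathcal{Q}_{4^*,q}(c)$ by the natural-constraint reduction, and recover compactness below the threshold. However, your final step has a genuine gap. Excluding a critical bubble and excluding vanishing only controls $\|\Delta w_n\|_2$, $\|w_n\|_{4^*}$ and $\|w_n\|_q$; none of these controls $\|w_n\|_2$, so ``hence $w_n\to0$ strongly, $u_n\to u$ in $H^2$, $u\in S(c)$'' does not follow. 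The remainder can carry positive mass while all its kinetic, critical and subcritical norms tend to zero (dichotomy in mass), leaving $\|u\|_2^2=c_1<c$: then $u$ minimizes $m_{4^*,q}(c_1)$, which is the theorem at the wrong mass. The paper closes exactly this hole with its monotonicity machinery: $c\mapsto m_{p,q}(c)$ is non-increasing (Lemma 2.6), and strictly decreasing to the right of any mass at which the infimum is attained by a solution with $\omega>0$ (Lemma 2.10, proved via an implicit-function perturbation of the Pohozaev constraint); if $c_1<c$ these two facts force $m(c_1)>m(c)$, contradicting $E_{4^*,q}(u)=m(c)\geq m(c_1)$. Alternatively, you could patch the gap with ingredients you already have: since $\omega_n\to\omega>0$ and $dE_{4^*,q}(u_n)+\omega_n u_n\to0$ in the dual space, testing the difference of the equations for $u_n$ and $u$ against $w_n$ and using Brezis--Lieb gives $\|\Delta w_n\|_2^2+\omega\|w_n\|_2^2=\mu\|w_n\|_q^q+\|w_n\|_{4^*}^{4^*}+o(1)$, and once the right-hand side vanishes, $\omega>0$ yields $\|w_n\|_2\to0$. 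Either way an explicit argument is required; bubble exclusion alone cannot produce $L^2$ convergence.

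There is also a quantitative flaw in your threshold estimate. After renormalizing to mass $c$, the subcritical correction is \emph{not} of order $\epsilon^{N-\frac{(N-4)q}{2}}$ for $5\leq N\leq 8$: the rescaling factor $\sqrt[4]{c^{-1}\|u_\epsilon\|_2^2}$ is itself $\epsilon$-dependent because $\|u_\epsilon\|_2^2\sim\epsilon^{N-4}$ (with a logarithm at $N=8$), and this shifts every power, which is why the paper's Lemma 4.1 records a table of cases. This is not a cosmetic point. For $N=6$ and $q\in[\frac{10}{3},4]$ your claimed order gives $\epsilon^{6-q}$ with $6-q\geq 2=N-4$, so the correction you invoke does \emph{not} dominate the $O(\epsilon^{N-4})$ truncation error and your comparison fails on part of the admissible range; the correct renormalized order is $\epsilon^{(6-q)/2}$ with $(6-q)/2<2$, which does dominate. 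So the mass renormalization is precisely what rescues the estimate in low dimensions (and, as you correctly note, what produces the restriction $q>\frac{22}{3}$ at $N=5$); the careful bookkeeping cannot be waved through as ``standard expansions.''
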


 To search for the normalized ground state solutions of \eqref{eq01}, the first challenging thing is to construct a bounded Palais-Smale sequence $\{u_{n}\}$ of $E_{p,q}|_{S(c)}$. To overcome this difficulty, we prove the existence of a Palais-Smale sequence $\{{u_n}\}$ of $E_{p,q}|_{S(c)}$ at level $m_{p,q}(c)$ by constructing it within the subspace $\mathcal{Q}_{p,q}(c)$, where the boundedness of the sequence can be obtained, see Lemma \ref{cpq}. In fact, motivated by the work of Bartsch and Soave \cite{bar,bar3}, we first show that $t_{u}$ defined in Lemma \ref{uni} is unique, and the map $u\mapsto t_{u}$ is of $C^1$ as stated in Lemma \ref{rem}. This implies that the functional $\Psi(u)$ defined in Section \ref{ground states1} is also $C^1$, as mentioned in Lemma \ref{cc1}. By adapting the arguments used in \cite{bar} to the $C^1$ constrained functional $I:=\Psi(u)|_{S(c)}$, we can obtain the desired Palais-Smale sequence.

In addition, another major obstruction is faced because of the lack of the compactness for the obtained sequence $\{u_{n}\}$. For the Sobolev subcritical case, the key ingredient is to show that the map $c\rightarrow m_{p,q}(c)$ is strictly decreasing, as stated in Lemma \ref{dea}. In fact, up to a translation and up to a subsequence, we can see that $\{u_{n}\}$ has a nontrivial weak limit $u$ with $\|u\|_{2}^{2}=c_{1}\in(0,c]$, which satisfies
$$\Delta^{2}u+\omega u-\mu_{n}|u|^{q-2}u-|u|^{p-2}u=0,$$
and $u\in \mathcal{Q}_{p,q}(c_{1})$. By using some arguments, we can show that $\lim\limits_{n\rightarrow\infty}E_{p,q}(u_{n}-u)\geq0$ and it follows that $$m_{p,q}(c)=\lim\limits_{n\rightarrow\infty}E_{p,q}(u_{n})=E_{p,q}(u)+\lim\limits_{n\rightarrow\infty}E_{p,q}(u_{n}-u)\geq m_{p,q}(c_1).$$
This allows us to get the strong convergence of the sequence $\{u_{n}\}$ if $c\rightarrow m_{p,q}(c)$ is strictly decreasing for $c>0$. However, in the Sobolev critical case, the problem becomes more complicated. We need to construct an appropriate testing function to derive an upper bound for $m_{4^,q}(c)$, which is denoted as $m_{4^,q}(c)<\frac{2}{N}\mathcal{S}^{\frac{N}{4}}$ and is obtained in Lemma \ref{ub}. This upper bound provides an estimate on the energy level $m_{4^*,q}(c)$, which is crucial for further analysis and convergence results.

Together with above analysis, we obtain a non-existence result as follows.
\begin{theorem}\label{Thm1.3}
	Let $N\geq5$, $c>0$, $\mu>0$, $2+\frac{8}{N} = q < p \leq 4^*$. We further assume that $\mu c^{\frac{4}{N}} \geq \frac{N+4}{NC^q_{N,q}}$. Then $E_{p,q}|_{S(c)}$ has no critical point $u$ at the level $E_{p,q}(u) = m_{p,q}(c)$ (including the case $\mathcal{Q}_{p,q}^\mu(c)=\emptyset$).
\end{theorem}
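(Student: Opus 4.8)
The plan is to prove non-existence by showing that, under the stated hypothesis, the ground state level collapses to zero, $m_{p,q}(c)=0$, and that this value is never attained on $\mathcal{Q}_{p,q}(c)$. Since every critical point of $E_{p,q}|_{S(c)}$ belongs to the Pohozaev set $\mathcal{Q}_{p,q}(c)$, any critical point at level $E_{p,q}(u)=m_{p,q}(c)$ would be a nontrivial element of $\mathcal{Q}_{p,q}(c)$ realizing the value $m_{p,q}(c)$, so the conclusion follows once both facts are in place; the degenerate case $\mathcal{Q}_{p,q}(c)=\emptyset$ is immediate, as there are then no critical points at all.

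First I would exploit the mass-critical character of the lower exponent. When $q=2+\frac{8}{N}$ one has $q\gamma_q=2$, hence $\frac{\gamma_q}{2}-\frac1q=0$. Substituting the constraint $\|\Delta u\|_2^2=\mu\gamma_q\|u\|_q^q+\gamma_p\|u\|_p^p$ into $E_{p,q}$ makes the $q$-term drop out entirely, leaving
\[ E_{p,q}(u)=\Big(\frac{\gamma_p}{2}-\frac1p\Big)\|u\|_p^p=\frac{p\gamma_p-2}{2p}\|u\|_p^p\qquad\text{for all }u\in\mathcal{Q}_{p,q}(c). \]
Because $p>q=2+\frac8N$ forces $p\gamma_p=\frac{N(p-2)}{4}>2$ (with equality replaced by $\gamma_{4^*}=1$ when $p=4^*$), the prefactor is strictly positive. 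Thus $E_{p,q}>0$ on $\mathcal{Q}_{p,q}(c)$, so $m_{p,q}(c)\geq0$, and a critical point at level $m_{p,q}(c)$ would force $m_{p,q}(c)=\frac{p\gamma_p-2}{2p}\|u\|_p^p>0$. The whole matter therefore reduces to proving $m_{p,q}(c)=0$.

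To evaluate $m_{p,q}(c)$ I would use the mass-preserving fiber map $(u)_t(x):=t^{N/2}u(tx)$, for which $\|(u)_t\|_2^2=\|u\|_2^2$, $\|\Delta(u)_t\|_2^2=t^4\|\Delta u\|_2^2$, $\|(u)_t\|_q^q=t^4\|u\|_q^q$ (again using $q\gamma_q=2$), and $\|(u)_t\|_p^p=t^{2p\gamma_p}\|u\|_p^p$. Writing $A(u)=\frac12\|\Delta u\|_2^2-\frac\mu q\|u\|_q^q$, $B(u)=\frac1p\|u\|_p^p$ and $\beta=2p\gamma_p>4$, one gets $E_{p,q}((u)_t)=A(u)t^4-B(u)t^\beta$. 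Since $\beta>4$, the fiber meets $\mathcal{Q}_{p,q}(c)$ at exactly one $t=t_u>0$ precisely when $A(u)>0$, and a direct computation yields the scale-invariant value $E_{p,q}((u)_{t_u})=c_\beta\,A(u)^{\beta/(\beta-4)}B(u)^{-4/(\beta-4)}$ with $c_\beta=\frac{\beta-4}{\beta}\big(\frac4\beta\big)^{4/(\beta-4)}>0$. Hence $m_{p,q}(c)=\inf\{c_\beta A(u)^{\beta/(\beta-4)}B(u)^{-4/(\beta-4)}:u\in S(c),\ A(u)>0\}$, and it suffices to drive $A(u)\to0^+$ along a sequence on which $\|u\|_p$ stays bounded away from zero. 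This is where the hypothesis enters through the sharp biharmonic Gagliardo--Nirenberg inequality $\|u\|_q^q\leq C_{N,q}^q\|\Delta u\|_2^2\|u\|_2^{8/N}$: normalizing $\|\Delta u\|_2=1$ (legitimate by scale invariance), $A(u)>0$ reads $\|u\|_q^q<\frac{q}{2\mu}$, while $\sup\{\|u\|_q^q:u\in S(c),\ \|\Delta u\|_2=1\}=C_{N,q}^qc^{4/N}$. Since $q=\frac{2(N+4)}{N}$, the assumption $\mu c^{4/N}\geq\frac{N+4}{NC_{N,q}^q}$ is exactly $\frac{q}{2\mu}\leq C_{N,q}^qc^{4/N}$, so the target level $\frac{q}{2\mu}$ is reachable as a limit of $\|u\|_q^q$ from below; taking $u_n$ with $\|u_n\|_q^q\nearrow\frac{q}{2\mu}$ (approaching, or perturbing, the Gagliardo--Nirenberg optimizer) gives $A(u_n)\to0^+$ with $\|u_n\|_p$ bounded and bounded below, whence $m_{p,q}(c)=0$.

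Combining the two steps finishes the argument: $m_{p,q}(c)=0$, but every $u\in\mathcal{Q}_{p,q}(c)$ satisfies $E_{p,q}(u)=\frac{p\gamma_p-2}{2p}\|u\|_p^p>0$, so the infimum is not attained and no critical point can sit at the level $m_{p,q}(c)$. I expect the main obstacle to be exactly the middle step, namely producing a minimizing sequence with $A(u_n)\to0^+$ while keeping $\|u_n\|_p$ non-vanishing; this is the point where the sharp constant $C_{N,q}^q$, its (near-)optimizer, and the threshold hypothesis $\mu c^{4/N}\geq\frac{N+4}{NC_{N,q}^q}$ are indispensable, and it is the feature that distinguishes this mass-critical regime from the strictly subcritical one of Theorem \ref{Thm1.1}.
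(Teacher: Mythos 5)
Your proposal follows, in substance, the same strategy as the paper (Lemma \ref{uni2} and the proof of Theorem \ref{Thm1.3}): since $q\gamma_q=2$, the $q$-term cancels on the Pohozaev set, giving $E_{p,q}(u)=\bigl(\frac{\gamma_p}{2}-\frac{1}{p}\bigr)\|u\|_p^p>0$ on $\mathcal{Q}_{p,q}^\mu(c)$, so everything reduces to proving $m_{p,q}^\mu(c)=0$; and your scale-invariant formula $E_{p,q}((u)_{t_u})=c_\beta A(u)^{\beta/(\beta-4)}B(u)^{-4/(\beta-4)}$ with $A(u)\to0^+$, $B(u)$ bounded below, is an algebraic repackaging of the paper's Claim 2, where $t_{w_n}^{\frac{N(p-2)}{4}-2}=\bigl(\|\Delta w_n\|_{2}^{2}-\mu\gamma_q\|w_n\|_{q}^{q}\bigr)/\bigl(\gamma_{p}\|w_n\|_{p}^{p}\bigr)\to0$ forces the constrained energy to vanish in the limit.

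The genuine gap is the step you yourself flag, and it is not merely technical. From $\sup\{\|u\|_q^q:u\in S(c),\ \|\Delta u\|_2=1\}=C_{N,q}^qc^{4/N}\geq\frac{q}{2\mu}$ one cannot conclude that there exist $u_n$ in this constraint set with $\|u_n\|_q^q\nearrow\frac{q}{2\mu}$ \emph{strictly from below}. When the hypothesis is strict, $\mu c^{4/N}>\frac{N+4}{NC^q_{N,q}}$, the near-supremum functions satisfy $\|u\|_q^q>\frac{q}{2\mu}$, i.e.\ $A(u)<0$, so they are useless, and a priori the set of attained values of $\|u\|_q^q$ could skip a whole neighborhood just below the threshold; what is needed is that this value set is an interval, i.e.\ a connectedness or intermediate-value argument on the constraint set. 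This is precisely what the paper's Claim 1 supplies: take $\phi_1\in S(c)$ with $\|\Delta\phi_1\|_2^2>\frac{N}{N+4}\mu\|\phi_1\|_q^q$ (available whenever $\mathcal{Q}_{p,q}^\mu(c)\neq\emptyset$ by Lemma \ref{uni2} $(ii)$), take a Gagliardo--Nirenberg minimizer $\phi_2$ with $\|\phi_2\|_2^2=c$, which under the hypothesis satisfies the reverse inequality $\|\Delta\phi_2\|_2^2\leq\frac{N}{N+4}\mu\|\phi_2\|_q^q$, and move along the normalized segment $\rho_\tau=\sqrt{c}\,(\tau\phi_1+(1-\tau)\phi_2)/\|\tau\phi_1+(1-\tau)\phi_2\|_2$; continuity produces $\tau_0$ with equality at $\rho_{\tau_0}$ and strict inequality for $\tau>\tau_0$, which is exactly your sequence with $A(w_n)\to0^+$ and $w_n\to w_\infty$ strongly. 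Your remaining unproved assertion, that $\|u_n\|_p$ stays bounded away from zero, then follows either from this strong convergence or from interpolating $L^q$ between $L^2$ and $L^p$ (since $\|u_n\|_q^q\to\frac{q}{2\mu}>0$ and $\|u_n\|_2^2=c$). So your architecture is sound, but the middle step requires an explicit interpolation construction of this type (which, note, also uses the existence of a Gagliardo--Nirenberg optimizer); it cannot be extracted from the sharp-constant statement alone.
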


Finally in this section, we will discuss the strong instability of standing waves to \eqref{eq01}. First we recall the definition of instability below.
\begin{definition}\label{inst}
We say that a solution $u\in H^{2}({\mathbb{R}^{N}})$ to \eqref{eq02} is unstable by blowup in finite time, if for any $\epsilon>0$, there exists a $v\in H^{2}({\mathbb{R}^{N}})$ such that $\|v-u\|_{H^2}<\epsilon$ and the solution $\psi(t)$ to \eqref{eq01} with initial data $\psi_{0}=v$ blows up in finite time.
\end{definition}

The instability of standing waves to nonlinear Schr\"odinger equations has been extensively studied in the literature, we refer the reader to \cite{H-S, Coz, SOA, SOA2} and the references therein. The strategy for proving instability is often based on the variational characterization of the ground states on the Pohozaev or Nehari manifold, see \cite{BOU} for example. The key point is to show the following inequality
$$Q_{p,q}(\psi)\leq E_{p,q}(\psi_{0})-E_{p,q}(u),$$
where $u$ is the corresponding ground state. Then by using the virial identity, we can obtain a differential inequality as follows
$$\frac{d^2}{dt^2}\int_{\mathbb{R}^{N}}|x\psi(t)|^2dx=8Q_{p,q}(\psi(t))\leq-(E_{p,q}(u)-E_{p,q}(\psi_{0}))<0.$$
This implies that the solution $\psi(t)$ blows up in finite time.

While for the biharmonic nonlinear Schr\"odinger equation with mixed dispersion
\begin{equation}\label{eq230219}
i\partial\psi-\gamma\Delta^2\psi+\epsilon\Delta\psi+|\psi|^{2\sigma}\psi=0,\qquad \gamma>0,\ \epsilon\in\mathbb{R},\ 4\leq\sigma\leq\frac{4}{N-4},	
\end{equation}
the virial identity is not available due to the appearance of the operator $\Delta^2$.
Boulenger and Lenzmann \cite{BOU} used a localized version of the virial identity from \cite{MIA,PAU} to prove blowup results for radial solutions in $H^{2}(\mathbb{R}^N)$ of \eqref{eq230219} with $\gamma=1$ under various criteria in the mass-critical, mass-supercritical and Sobolev critical cases. In particular when $\epsilon=0$, they assumed that the energy of the initial data was either negative or non-negative with some additional conditions on the initial data. Bonheure et al. \cite{GOUL} proved the instability by blowup of the radial ground state solutions for \eqref{eq230219} when $\epsilon=0$ in both the mass-critical and mass-supercritical regimes. Later, in \cite{GOU}, they showed that the instability of the radial normalized ground state solutions can be proved by blowup with $\epsilon=1$ when $4\leq\sigma<\frac{4}{N-4}$.

In the present paper, we aim to study the instability by blowup for ground states of \eqref{eq01}. To the best of our knowledge, so far, there has been no existing instability results for the biharmonic nonlinear Schr\"odinger equations with combined nonlinearities. We will borrow an idea from Boulenger and Lenzmann \cite{BOU} to study this issue. However, we cannot directly use the arguments from Boulenger and Lenzmann's work
  due to the fact that the energy of the initial data satisfies $E_{p,q}(\psi_{0})>0$ without any additional assumptions on $\psi_{0}$. Therefore, we need to come up with a different approach to establish instability results for \eqref{eq01}.

More precisely, in order to study the instability of \eqref{eq01}, one of the main challenges is to obtain the following estimate
\begin{equation}\label{eq230218}
\frac{d}{dt}M_{\varphi_{R}}[\psi(t)]\leq-\delta\|\Delta\psi(t)\|^2_2, \qquad	 \delta>0,
\end{equation}
where the modified energy $M_{\varphi_{R}}[\psi(t)]$ is defined in \eqref{eq6001}. This estimate is crucial for establishing the instability result. However, there are several terms in the inequality that make it difficult to estimate \eqref{eq230218}, including the term $\mu|u|_q^q$ or $|u|_p^p$ that remains in the inequality after applying variational arguments.
Another challenge is that the term $|\Delta\psi(t)|^2_2$ in \eqref{eq230218} may be unbounded, which further complicates the estimation process. To overcome these challenges, a key point is to carefully choose a suitable bound for $|\Delta\psi(t)|^2_2$, which can be used to
obtain the desired estimate for $\frac{d}{dt}M_{\varphi_{R}}[u(t)]$.

In what follows, we state our main instability results in Sobolev subcritical case and Sobolev critical case respectively.
\begin{theorem}\label{orbi}
Let $2+\frac{8}{N}\leq q<p<4^*$. Then standing waves associated with the ground state solutions to \eqref{eq02} with prescribed mass are strongly unstable.
\end{theorem}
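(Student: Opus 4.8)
The plan is to run the classical Berestycki--Cazenave scheme, adapted to the biharmonic setting, by combining the variational geometry of the ground state with the localized virial machinery of Boulenger--Lenzmann \cite{BOU}. Let $u$ be a ground state from Theorem \ref{Thm1.1} and write $m:=m_{p,q}(c)$; the admissible perturbations below inherit the symmetry of $u$, and I take $u$ radial so that the radial localized virial estimates of \cite{BOU,MIA,PAU} apply. The first ingredient is the $L^2$-preserving dilation $u_t(x):=t^{N/2}u(tx)$, which keeps $u_t\in S(c)$ and obeys the scaling relation $\frac{d}{dt}E_{p,q}(u_t)=\frac{2}{t}Q_{p,q}(u_t)$. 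Because $2+\frac{8}{N}\le q<p<4^*$ forces $q\gamma_q\ge 2$ and $p\gamma_p>2$, a direct comparison of the scaling exponents shows that $t\mapsto E_{p,q}(u_t)$ is strictly increasing on $(0,1)$ and strictly decreasing on $(1,\infty)$, with its unique global maximum at $t=1$ (where $Q_{p,q}(u)=0$ since $u\in\mathcal{Q}_{p,q}(c)$). Hence $E_{p,q}(u_t)<m$ and $Q_{p,q}(u_t)<0$ for every $t>1$. Fixing $\lambda>1$ and setting $\psi_0:=u_\lambda$, I obtain radial data with $\|\psi_0-u\|_{H^2}\to 0$ as $\lambda\to 1^+$, while $E_{p,q}(\psi_0)<m$ and $Q_{p,q}(\psi_0)<0$; these are the perturbations required by Definition \ref{inst}.

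Next I would show that the region $\mathcal{B}:=\{v\in S(c):E_{p,q}(v)<m,\ Q_{p,q}(v)<0\}$ is invariant under the flow of \eqref{eq01}. By conservation of mass and energy, $\psi(t)\in S(c)$ and $E_{p,q}(\psi(t))=E_{p,q}(\psi_0)<m$ on the maximal existence interval. If $Q_{p,q}(\psi(t_0))=0$ at some $t_0$, then $\psi(t_0)\in\mathcal{Q}_{p,q}(c)$, so $E_{p,q}(\psi(t_0))\ge m$, a contradiction; since $t\mapsto Q_{p,q}(\psi(t))$ is continuous it cannot change sign, so $Q_{p,q}(\psi(t))<0$ throughout and $\psi(t)\in\mathcal{B}$.

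The heart of the argument, and the step I expect to be the main obstacle, is the coercive virial estimate \eqref{eq230218}. On $\mathcal{B}$ I would exploit the exact identity
\[
Q_{p,q}(v)=q\gamma_q E_{p,q}(v)+\Big(1-\tfrac{q\gamma_q}{2}\Big)\|\Delta v\|_2^2+\tfrac{N(q-p)}{4p}\|v\|_p^p,
\]
in which both $1-\frac{q\gamma_q}{2}\le 0$ and $\frac{N(q-p)}{4p}<0$, together with the strict energy gap $E_{p,q}(\psi(t))=E_{p,q}(\psi_0)=:m-\kappa$ with $\kappa>0$, and a uniform lower bound $\|\Delta\psi(t)\|_2^2\ge\rho>0$ valid on $\mathcal{B}$ (obtained by inserting the biharmonic Gagliardo--Nirenberg inequality into $Q_{p,q}<0$, using the smallness condition $\mu c^{4/N}<\frac{N+4}{NC^q_{N,q}}$ when $q=2+\frac{8}{N}$). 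The delicate point is absorbing the positive contribution $q\gamma_q E_{p,q}(\psi(t))$: this is exactly the leftover kinetic/nonlinear obstruction flagged after \eqref{eq230218}, and it is resolved by carefully choosing the bound for $\|\Delta\psi(t)\|_2^2$ so that the gap $\kappa$ turns the identity into $Q_{p,q}(\psi(t))\le-\delta\|\Delta\psi(t)\|_2^2$ for some $\delta=\delta(\kappa)>0$. Feeding this into the localized virial identity for the modified energy $M_{\varphi_R}$ from \eqref{eq6001}, whose localization errors are controlled for radial functions and made negligible by taking $R$ large, yields precisely \eqref{eq230218}.

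Finally I would conclude finite-time blowup. Since $|M_{\varphi_R}[\psi]|\lesssim_R\|\psi\|_2\|\nabla\psi\|_2$ is controlled by the $H^2$-norm, whereas \eqref{eq230218} makes $M_{\varphi_R}[\psi(t)]$ strictly decreasing, a global-in-time solution with bounded $\|\Delta\psi(t)\|_2$ would drive $M_{\varphi_R}$ to $-\infty$, which is impossible. Quantitatively, setting $y(t):=\int_0^t\|\Delta\psi(s)\|_2^2\,ds$ and combining \eqref{eq230218} with the upper bound on $|M_{\varphi_R}|$ produces a Glassey-type differential inequality forcing $y$, and hence $\|\Delta\psi(t)\|_2$, to blow up at some finite $T^*<\infty$. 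As $\psi_0$ can be taken arbitrarily $H^2$-close to $u$, the standing wave $e^{i\omega t}u$ is strongly unstable, which proves Theorem \ref{orbi}.
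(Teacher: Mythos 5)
Your overall scheme (radial perturbation $\psi_0=u_\lambda$ with $\lambda>1$, invariance of the set $\{E_{p,q}<m_{p,q}(c),\,Q_{p,q}<0\}$ under the flow, localized radial virial estimate, Glassey-type ODE argument plus the blowup alternative) coincides with the paper's, but the heart of the argument --- deriving $\frac{d}{dt}M_{\varphi_R}[\psi(t)]\le-\delta\|\Delta\psi(t)\|_2^2$ --- has a genuine gap. Your identity
\[
Q_{p,q}(v)=q\gamma_q E_{p,q}(v)+\Bigl(1-\tfrac{q\gamma_q}{2}\Bigr)\|\Delta v\|_2^2+\tfrac{N(q-p)}{4p}\|v\|_p^p
\]
is correct, but it contains the full energy $E_{p,q}(\psi_0)\approx m_{p,q}(c)>0$, \emph{not} the gap $\kappa=m_{p,q}(c)-E_{p,q}(\psi_0)$: the gap simply does not appear in it, so it cannot "turn the identity into" a coercive bound. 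The identity yields $Q_{p,q}\le-\delta\|\Delta\psi\|_2^2$ only when $\|\Delta\psi(t)\|_2^2$ is large compared with $q\gamma_q E_{p,q}(\psi_0)/(\tfrac{q\gamma_q}{2}-1)$; for times at which $\|\Delta\psi(t)\|_2^2$ lies in a bounded range above your lower bound $\rho$, its right-hand side can be positive, and the qualitative information $Q_{p,q}(\psi(t))<0$ coming from the invariant set gives no quantitative control (in particular no way to absorb the virial error terms, which are only small after fixing $\xi$ and $R$ uniformly in $t$). What closes this regime in the paper is a separate variational estimate (Step 3 of its proof): since $Q_{p,q}(\psi)<0$, Lemma \ref{uni} gives $s_\psi\in(0,1)$ with $Q_{p,q}(\psi_{s_\psi})=0$, and the concavity of $s\mapsto E_{p,q}(\psi_s)$ on $[s_\psi,\infty)$ yields the uniform bound
\[
Q_{p,q}(\psi(t))\le E_{p,q}(\psi_0)-m_{p,q}(c)=-\kappa<0\quad\text{for all }t\in[0,T),
\]
which is then combined with a two-case splitting in $\|\Delta\psi(t)\|_2^2$. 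This concavity step is exactly where the energy gap enters, and it is missing from your proposal.

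A second, related omission concerns the endpoint $q=2+\frac{8}{N}$, which the theorem allows: there $q\gamma_q=2$, so $1-\frac{q\gamma_q}{2}=0$ and the kinetic term disappears from your identity altogether, leaving $Q_{p,q}=2E_{p,q}(\psi_0)+\frac{N(q-p)}{4p}\|\psi\|_p^p$ with no $\|\Delta\psi\|_2^2$ on the right-hand side, even for large kinetic energy. The paper devotes a genuinely different computation to this case (the parameters $\sigma,\beta_1,\beta_2$, the Young-inequality interpolation $\|\psi\|_q^q\le C_\eta\|\psi\|_2^2+\eta\|\psi\|_p^p$, and the threshold $\rho_0$ in \eqref{eq230217}). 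Alternatively, one could note that on $\{Q_{p,q}<0\}$ the smallness assumption $\mu c^{4/N}<\frac{N+4}{NC^q_{N,q}}$ forces, as in \eqref{eq8801}, $\gamma_p\|\psi\|_p^p\ge\bigl(1-\tfrac{\mu NC^q_{N,q}}{N+4}c^{4/N}\bigr)\|\Delta\psi\|_2^2$, so the $\|\psi\|_p^p$ term itself controls $\|\Delta\psi\|_2^2$; but either way an additional argument is required, and your sketch does not register that this case needs one.
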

\begin{theorem}\label{orbi2}
Let $2+\frac{8}{N}\leq q<p=4^*$. Then standing waves associated with the ground state solutions to \eqref{eq01} with prescribed mass are strongly unstable.
\end{theorem}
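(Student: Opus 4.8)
The plan is to prove strong instability via the blowup mechanism of Boulenger and Lenzmann \cite{BOU}, adapted to the positive-energy, combined-nonlinearity setting. Throughout, let $u$ be a ground state furnished by Theorem \ref{Thm1.2}, set $m:=m_{4^*,q}(c)=E_{4^*,q}(u)>0$, and recall that $Q_{4^*,q}(u)=0$. The building block is the mass-preserving dilation $u_{\tau}(x):=\tau^{N/2}u(\tau x)$, for which $\|u_{\tau}\|_2^2=c$, $\|\Delta u_{\tau}\|_2^2=\tau^4\|\Delta u\|_2^2$ and $\|u_{\tau}\|_r^r=\tau^{N(r-2)/2}\|u\|_r^r$. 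Writing $f(\tau):=E_{4^*,q}(u_{\tau})$, a direct computation gives the identity $f'(\tau)=\frac{2}{\tau}Q_{4^*,q}(u_{\tau})$. Since $\gamma_{4^*}=1$ and $q\geq 2+\frac{8}{N}$, the map $\tau\mapsto \tau^{-4}Q_{4^*,q}(u_{\tau})$ is nonincreasing and vanishes at $\tau=1$; hence $Q_{4^*,q}(u_{\tau})<0$ and $f'(\tau)<0$ for every $\tau>1$. Consequently, for $\tau>1$ sufficiently close to $1$, the function $v:=u_{\tau}$ satisfies $\|v-u\|_{H^2}<\epsilon$ together with the two crucial strict inequalities $E_{4^*,q}(v)<m$ and $Q_{4^*,q}(v)<0$. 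These $v$ serve as the perturbed initial data in Definition \ref{inst}; note also that $v$ is radial whenever $u$ is, a property the flow preserves.

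Next I would set up the invariant region and the sharp $Q$-estimate. Let $\psi$ be the solution of \eqref{eq01} with $\psi(0)=v$ on its maximal interval $[0,T_{\max})$; mass and energy are conserved, so $\psi(t)\in S(c)$ and $E_{4^*,q}(\psi(t))=E_{4^*,q}(v)<m$ for all $t$. I claim $Q_{4^*,q}(\psi(t))<0$ is preserved: if $Q_{4^*,q}$ vanished at some first time, then $\psi(t)\in\mathcal{Q}_{4^*,q}(c)$, forcing $E_{4^*,q}(\psi(t))\geq m$, a contradiction. The decisive quantitative ingredient is the variational inequality
\begin{equation*}
Q_{4^*,q}(w)\leq 2\bigl(E_{4^*,q}(w)-m\bigr)\qquad\text{whenever } w\in S(c),\ Q_{4^*,q}(w)<0,
\end{equation*}
which I would prove by comparing $w$ with its dilation at the unique $\tau_0\in(0,1)$ satisfying $w_{\tau_0}\in\mathcal{Q}_{4^*,q}(c)$, using the monotonicity of $\tau\mapsto \tau^{-4}Q_{4^*,q}(w_{\tau})$ to integrate $f'(\tau)=\frac{2}{\tau}Q_{4^*,q}(w_{\tau})$ from $\tau_0$ to $1$. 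Applied along the flow and combined with energy conservation, this yields the uniform bound
\begin{equation*}
Q_{4^*,q}(\psi(t))\leq 2\bigl(E_{4^*,q}(v)-m\bigr)=:-2\delta_0<0,\qquad t\in[0,T_{\max}).
\end{equation*}

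The core of the argument is then the localized virial estimate. Using the functional $M_{\varphi_{R}}[\psi]$ defined in \eqref{eq6001}, one differentiates in time to obtain a representation of the form $\frac{d}{dt}M_{\varphi_{R}}[\psi(t)]=8\,Q_{4^*,q}(\psi(t))+\mathcal{R}_R(t)$, where $\mathcal{R}_R(t)$ gathers the cutoff errors produced by $\Delta^2$ and by the two nonlinearities in the region $|x|\gtrsim R$. \textbf{This remainder estimate is the main obstacle}, and it is genuinely harder in the Sobolev critical case: the leftover critical term $\|\psi\|_{4^*}^{4^*}$ cannot be absorbed by subcritical interpolation, and the kinetic quantity $\|\Delta\psi(t)\|_2^2$ appearing in $\mathcal{R}_R$ need not be bounded a priori. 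I would handle this by invoking radial decay (Strauss-type) bounds for $\psi$ on $|x|\gtrsim R$ to make the nonlinear part of $\mathcal{R}_R$ small, and by exploiting the uniform negativity $Q_{4^*,q}(\psi(t))\leq -2\delta_0$ so that a fixed fraction of $-8\delta_0$ dominates the surviving kinetic error; combined with a judicious choice of the upper bound for $\|\Delta\psi(t)\|_2^2$ flagged around \eqref{eq230218}, taking $R$ large enough gives $\frac{d}{dt}M_{\varphi_{R}}[\psi(t)]\leq -\delta\|\Delta\psi(t)\|_2^2$ for some $\delta>0$, and in particular $\frac{d}{dt}M_{\varphi_{R}}[\psi(t)]\leq -4\delta_0<0$.

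Finally I would close the argument by the standard convexity/ODE mechanism. The elementary bound $|M_{\varphi_{R}}[\psi(t)]|\lesssim_R \|\psi(t)\|_2\|\nabla\psi(t)\|_2\lesssim_R c^{3/4}\|\Delta\psi(t)\|_2^{1/2}$, valid by mass conservation and interpolation, together with $\frac{d}{dt}M_{\varphi_{R}}[\psi(t)]\leq -\delta\|\Delta\psi(t)\|_2^2$, shows that $M_{\varphi_{R}}[\psi(t)]$ becomes negative in finite time and thereafter obeys a differential inequality of the form $\frac{d}{dt}M_{\varphi_{R}}\leq -\delta' M_{\varphi_{R}}^{4}$ with $M_{\varphi_{R}}<0$, which drives $M_{\varphi_{R}}[\psi(t)]\to-\infty$, hence $\|\Delta\psi(t)\|_2\to\infty$, in finite time. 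This contradicts $T_{\max}=\infty$, so $\psi$ must blow up in finite time. Since $v$ may be taken arbitrarily close to $u$ in $H^2$, the standing wave $e^{i\omega t}u$ is strongly unstable, proving Theorem \ref{orbi2}.
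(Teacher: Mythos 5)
Your overall architecture coincides with the paper's: perturbed initial data obtained by the mass-preserving dilation of the ground state (your $u_\tau$ is the paper's $u_t$ with $t=\tau^2$), invariance under the flow of the set $\{E_{4^*,q}<m,\ Q_{4^*,q}<0\}$, a uniform negative bound on $Q_{4^*,q}(\psi(t))$, a localized virial estimate of the form $\frac{d}{dt}M_{\varphi_R}[\psi(t)]\leq-\delta\|\Delta\psi(t)\|_2^2$, and the Cauchy--Schwarz/ODE mechanism at the end. The parts you actually carry out are correct; in particular your inequality $Q_{4^*,q}(w)\leq 2\bigl(E_{4^*,q}(w)-m\bigr)$ for $w\in S(c)$ with $Q_{4^*,q}(w)<0$, obtained by integrating $f'(\tau)=\frac{2}{\tau}Q_{4^*,q}(w_\tau)$ against the monotonicity of $\tau\mapsto\tau^{-4}Q_{4^*,q}(w_\tau)$, is a legitimate variant of the paper's concavity argument (which yields $Q_{4^*,q}(\psi(t))\leq E_{4^*,q}(\psi_0)-m=:-a^*$); either way one gets the required uniform bound, and your closing ODE step matches the paper's Step 5.

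The genuine gap is in the step you yourself flag as the main obstacle, and your proposed mechanism for it would fail. You argue that ``a fixed fraction of $-8\delta_0$ dominates the surviving kinetic error'' after taking $R$ large. This works only while $\|\Delta\psi(t)\|_2^2$ stays below a fixed threshold: the cutoff errors in Lemma \ref{vir} grow with $\|\Delta\psi(t)\|_2$ (as $R^{-2}\|\Delta\psi\|_2$, $R^{-\frac{(q-2)(N-1)}{2}}\|\Delta\psi\|_2^{\frac{q-2}{4}}$ and, for $p=4^*$, $R^{-\frac{(4^*-2)(N-1)}{2}}\|\Delta\psi\|_2^{\frac{4^*-2}{4}}$, which for $N=5$ is \emph{quadratic}, namely $R^{-16}\|\Delta\psi\|_2^2$), while $-16\delta_0$ is a constant; moreover a constant bound $\frac{d}{dt}M_{\varphi_R}\leq-4\delta_0$ cannot yield $\leq-\delta\|\Delta\psi(t)\|_2^2$ on the set where $\|\Delta\psi(t)\|_2^2$ is large. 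What is missing is precisely the complementary regime: when $\|\Delta\psi(t)\|_2^2>\rho^*$ (the paper's threshold built from $E_{4^*,q}(\psi_0)$), the bound $Q_{4^*,q}(\psi(t))\leq-2\delta_0$ is useless, and one must instead use energy conservation to rewrite $8Q_{4^*,q}(\psi(t))=2N(q-2)E_{4^*,q}(\psi_0)-(N(q-2)-8)\|\Delta\psi(t)\|_2^2-\frac{2N(4^*-q)}{4^*}\|\psi(t)\|_{4^*}^{4^*}$, so that the negative part scales like $-\|\Delta\psi(t)\|_2^2$ and absorbs the $C\xi\|\Delta\psi\|_2^2$ errors produced by Young's inequality. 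Furthermore, when $q=2+\frac{8}{N}$ the coefficient $N(q-2)-8$ vanishes and this rewriting degenerates; the paper then needs an additional Young interpolation of $\|\psi\|_q^q$ between $\|\psi\|_2^2$ and $\|\psi\|_{4^*}^{4^*}$ with a small parameter $\eta$ (the $\sigma^*,\beta_2^*,C_\eta$ computation) to still extract a term $-\delta^*\|\Delta\psi(t)\|_2^2$. Without this dichotomy and these two special-case arguments ($q=2+\frac{8}{N}$ and $N=5$), your Step 4 does not close; with them, your proof becomes essentially the paper's.
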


\begin{remark}
When considering the instability of normalized ground states solutions to \eqref{eq01}, we work in the space $H^2_{r}(\mathbb{R}^{N})$. It is still not clear for us how to control the error terms without the radial symmetry, even though the local virial identities hold true without radial assumptions. If we set $p,q\in\mathbb{N}$, then we may use the symmetric-decreasing rearrangement in Fourier space to prove a radial symmetry result for ground states. See \cite{BOU,LEN1,LEN2}.
\end{remark}

This paper is organized as follows: In Section \ref{preliminary}, some preliminary results are presented. In Section \ref{ground states1}, we give the proof of Theorem \ref{Thm1.1}. In Section \ref{ground states2}, the proof of Theorem \ref{Thm1.2} is established. In Section \ref{non-existence}, we prove Theorem \ref{Thm1.3}. Finally in Section \ref{insta}, we give the proofs of Theorem \ref{orbi} and Theorem\ref{orbi2} which are used to study the strong instability of standing waves corresponding to the normalized ground states. $\|\cdot\|_q$ will denote the standard norm in $L^{q}(\mathbb{R}^N)$.

\section{Preliminaries}\label{preliminary}
In this section, we present some useful lemmas to be often used in this paper.

Take
$\|u\|_{H^{2}}=(\int_{\mathbb{R}^{N}}|\Delta u|^{2}+|u|^{2}dx)^{\frac{1}{2}}$
 to be the norm of $H^{2}$, which is equivalent to the standard one
$\|u\|_{H^{2}}=(\int_{\mathbb{R}^{N}}|D^{2}u|^{2}+|\nabla u|^{2}+|u|^{2}dx)^{\frac{1}{2}}.$
We first recall the well-known Gagliardo-Nirenberg inequality for any $u\in H^{2}$.
\begin{lemma}\label{gni}
Let $N\geq5$ and $2<r<4^*$. Then the following sharp Gagliardo-Nirenberg inequality
 \begin{equation}\label{eq05}
\|u\|_{r}^{r}\leq C^{r}_{N,r}\|\Delta u\|_{2}^{r\gamma_{r}}\|u\|_{2}^{r(1-\gamma_{r})},
\end{equation}	
holds for any $u\in H^{2}(\mathbb{R}^N)$, where $C_{N,r}$ denotes the sharp constant.	
\end{lemma}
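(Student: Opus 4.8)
The plan is to proceed in three stages: fix the interpolation exponent by a scaling analysis, establish the inequality with a finite constant by interpolating between $L^2$ and $L^{4^*}$, and finally identify the optimal constant as a constrained infimum whose attainment is the delicate point.

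First I would record the scaling structure of the problem. For $u\in H^2(\mathbb{R}^N)$ and parameters $a,\lambda>0$, consider the two–parameter family $u_{a,\lambda}(x)=a\,u(\lambda x)$, so that $\|u_{a,\lambda}\|_r^r=a^r\lambda^{-N}\|u\|_r^r$, $\|\Delta u_{a,\lambda}\|_2^2=a^2\lambda^{4-N}\|\Delta u\|_2^2$ and $\|u_{a,\lambda}\|_2^2=a^2\lambda^{-N}\|u\|_2^2$. Requiring the quotient
$$J(u):=\frac{\|\Delta u\|_2^{r\gamma_r}\,\|u\|_2^{r(1-\gamma_r)}}{\|u\|_r^r}$$
to be invariant under this group, matching the powers of $a$ and of $\lambda$ separately, forces exactly $\gamma_r=\frac{N(r-2)}{4r}$. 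This is the only exponent for which an inequality of the stated homogeneous form can hold, and it shows that $J$ is invariant under dilations and mass rescalings.

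Next I would prove that $J$ is bounded below away from zero, which is the inequality itself for some finite $C_{N,r}$. Since $2<r<4^*$, choose $t\in(0,1)$ with $\frac{1}{r}=\frac{1-t}{2}+\frac{t}{4^*}$; a direct computation using $4^*=\frac{2N}{N-4}$ gives $t=\gamma_r$. Hölder's inequality yields $\|u\|_r\leq\|u\|_2^{1-t}\|u\|_{4^*}^{t}$, and combining this with the Sobolev embedding $H^2(\mathbb{R}^N)\hookrightarrow L^{4^*}(\mathbb{R}^N)$, written as $\|u\|_{4^*}\leq\mathcal{S}^{-1/2}\|\Delta u\|_2$ with $\mathcal{S}$ the best Sobolev constant, produces
$$\|u\|_r^r\leq \mathcal{S}^{-\tfrac{r\gamma_r}{2}}\,\|\Delta u\|_2^{r\gamma_r}\,\|u\|_2^{r(1-\gamma_r)},$$
so the exponents come out automatically equal to those predicted by Step 1. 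I would then \emph{define} the sharp constant through $C_{N,r}^{-r}=\inf_{u\in H^2\setminus\{0\}}J(u)>0$, positivity being guaranteed by the bound just established.

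It remains to show this infimum is attained, which I expect to be the main obstacle. I would take a minimizing sequence $\{u_n\}$ and use the full invariance group of Step 1 to normalize $\|\Delta u_n\|_2=\|u_n\|_2=1$, so that $\{u_n\}$ is bounded in $H^2$ and $\|u_n\|_r^r\to C_{N,r}^r$. The difficulty is the loss of compactness: the functional is invariant under translations and dilations, so a minimizing sequence may vanish, split (dichotomy), or escape to infinity. I would exclude vanishing using the lower bound on $\|u_n\|_r$, exclude dichotomy via the strict subadditivity of the minimization level forced by the scaling homogeneity, and remove translations by recentering, following Lions' concentration–compactness principle. A genuinely delicate point special to the biharmonic setting is that one cannot reduce to radial functions by symmetrization: the symmetric–decreasing rearrangement controls $\|\nabla u\|_2$ but not $\|\Delta u\|_2$, so radial symmetry of an optimizer is not automatic (this is precisely the subtlety noted in the Remark, where a Fourier rearrangement is invoked). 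Hence compactness must be extracted by the concentration–compactness argument directly; once a nontrivial weak limit preserving the normalized norms is secured, weak lower semicontinuity of $\|\Delta\cdot\|_2$ together with the normalization shows it attains the infimum, identifying $C_{N,r}$ as the sharp constant. Since the inequality is classical, the existence of the optimal constant may alternatively be quoted from the literature.
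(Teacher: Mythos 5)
Your proposal is correct, and it is necessarily a different route from the paper's, because the paper offers no proof at all: Lemma \ref{gni} is simply recalled as a well-known result, with the sharp constant $C_{N,r}$ taken from the literature. Your first two steps already constitute a complete and correct proof of the statement. The scaling computation correctly identifies $\gamma_r=\frac{N(r-2)}{4r}$ as the only admissible exponent; the interpolation exponent $t$ determined by $\frac{1}{r}=\frac{1-t}{2}+\frac{t}{4^*}$ indeed equals $\gamma_r$ (since this relation reads $\frac{1}{r}=\frac{1}{2}-\frac{2t}{N}$); and H\"older's inequality together with the Sobolev inequality $\mathcal{S}\|u\|_{4^*}^{2}\leq\|\Delta u\|_{2}^{2}$ yields \eqref{eq05} with the explicit constant $\mathcal{S}^{-r\gamma_r/2}$. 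Consequently $\inf J>0$, the inequality holds with $C_{N,r}^{-r}:=\inf J$, and this constant is sharp by construction.

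The one point to correct is your framing of Step 3 as the ``main obstacle'': attainment of the infimum is not needed for the lemma as stated. The sharp constant is well defined, and \eqref{eq05} holds with it, as soon as the infimum is positive --- which your Step 2 already delivers. So the concentration--compactness sketch, the only incomplete portion of your proposal (strict subadditivity and the exclusion of dichotomy are asserted rather than proved), can simply be omitted for this statement. Attainment does matter elsewhere in the paper: the proof of Lemma \ref{uni2} takes a minimizer $\phi_2$ of \eqref{eq05}, and there one must genuinely invoke either Lions' method or the Fourier rearrangement of Lenzmann--Sok; your observation that Schwarz symmetrization controls $\|\nabla u\|_2$ but not $\|\Delta u\|_2$ is exactly the right reason why the classical Weinstein argument does not transfer to the biharmonic setting.
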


Then, by similar arguments as in \cite{M1996}, we have the following Lions' type lemma in $H^{2}(\mathbb{R}^{N})$.
\begin{lemma}\label{pll}
Let $R>0$. If $\{u_{n}\}$ is bounded in $H^{2}$ and if
$$\sup\limits_{y\in\mathbb{R}^{N}}\int_{B(y,R)}|u_{n}|^{2}dx\rightarrow0,\quad\mathrm{as}\quad n\rightarrow\infty,$$
then $u_{n}\rightarrow0$ in $L^{r}(\mathbb{R}^{N})$ for $2<p<4^*$.
\end{lemma}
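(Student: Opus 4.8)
The plan is to localize the problem to balls of fixed radius $R$, prove an interpolation inequality there that mirrors the global Gagliardo--Nirenberg inequality of Lemma \ref{gni}, and then sum over a covering of $\mathbb{R}^{N}$ of finite overlap. The key structural fact is that the Sobolev embedding $H^{2}(B(y,R))\hookrightarrow L^{4^{*}}(B(y,R))$ holds with a constant independent of the center $y$, by translation invariance of Lebesgue measure; this lets every local estimate be applied uniformly as $y$ ranges over $\mathbb{R}^{N}$. The threshold exponent governing the argument is $2+\frac{8}{N}$, the $L^{2}$-critical exponent, exactly as in the rest of the paper.

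First I would record the local interpolation inequality. Fix $r\in(2,4^{*})$ and note that the exponent $t$ determined by $\frac{1}{r}=\frac{1-t}{2}+\frac{t}{4^{*}}$ is precisely $t=\gamma_{r}$, since $\frac{1}{2}-\frac{1}{4^{*}}=\frac{2}{N}$ and hence $t=\frac{N}{2}(\frac{1}{2}-\frac{1}{r})=\gamma_{r}$. Combining H\"older interpolation on the ball with the uniform Sobolev embedding gives
\[
  \|u\|_{L^{r}(B(y,R))}\le C\,\|u\|_{L^{2}(B(y,R))}^{1-\gamma_{r}}\,\|u\|_{H^{2}(B(y,R))}^{\gamma_{r}},
\]
with $C$ independent of $y$. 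Raising this to the power $r$, summing over a locally finite cover $\{B(y_{j},R)\}_{j}$ of $\mathbb{R}^{N}$ with overlap bounded by some $k=k(N)$, and pulling out the supremum in the $L^{2}$-factor, I obtain
\[
  \int_{\mathbb{R}^{N}}|u|^{r}\,dx\le C\Big(\sup_{y\in\mathbb{R}^{N}}\int_{B(y,R)}|u|^{2}\,dx\Big)^{\frac{r(1-\gamma_{r})}{2}}\sum_{j}\|u\|_{H^{2}(B(y_{j},R))}^{r\gamma_{r}}.
\]

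It remains to show the sum on the right stays bounded along the sequence. Here the computation $r\gamma_{r}=\frac{N(r-2)}{4}$ is decisive: when $r\ge 2+\frac{8}{N}$ one has $r\gamma_{r}\ge 2$, so writing $a_{j}=\|u\|_{H^{2}(B(y_{j},R))}^{2}$ and using the elementary inequality $\sum_{j}a_{j}^{s}\le(\sum_{j}a_{j})^{s}$ for $s=\frac{r\gamma_{r}}{2}\ge 1$ together with the finite overlap yields
\[
  \sum_{j}\|u\|_{H^{2}(B(y_{j},R))}^{r\gamma_{r}}\le\Big(k\,\|u\|_{H^{2}}^{2}\Big)^{\frac{r\gamma_{r}}{2}},
\]
which is bounded because $\{u_{n}\}$ is bounded in $H^{2}$. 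Since the supremum factor tends to $0$, this forces $u_{n}\to 0$ in $L^{r}$ for every $r\in[2+\frac{8}{N},4^{*})$. For the remaining range $2<r<2+\frac{8}{N}$, I would interpolate $\|u_{n}\|_{L^{r}}\le\|u_{n}\|_{L^{2}}^{1-\sigma}\|u_{n}\|_{L^{2+8/N}}^{\sigma}$ (with $\sigma\in(0,1)$ from $\frac{1}{r}=\frac{1-\sigma}{2}+\frac{\sigma}{2+8/N}$); the first factor is bounded and the second tends to $0$ by the previous step, so $u_{n}\to 0$ in $L^{r}$ as well.

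The main obstacle is the summation step. With the $H^{2}$-norms on the balls, the $\ell^{1}\hookrightarrow\ell^{s}$ bound only applies when $s=\frac{r\gamma_{r}}{2}\ge 1$, i.e. $r\ge 2+\frac{8}{N}$; for powers below $2$ the sum $\sum_{j}\|u\|_{H^{2}(B(y_{j},R))}^{r\gamma_{r}}$ cannot be controlled by $\|u\|_{H^{2}}$ in one step, which is exactly why the argument splits at $r=2+\frac{8}{N}$ and why the sub-threshold range must be recovered by a second interpolation against the merely bounded $L^{2}$-norm. Keeping the embedding constants uniform over the covering and fixing a cover of finite overlap with balls of the prescribed radius $R$ are the remaining points requiring care; everything else follows routinely from the boundedness of $\{u_{n}\}$ in $H^{2}$ and the vanishing of the supremum.
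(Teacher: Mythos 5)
Your proof is correct, and it is essentially the standard Lions-type vanishing argument that the paper itself invokes without proof (it simply defers to ``similar arguments as in'' Willem's book): local interpolation between $L^{2}$ and the Sobolev embedding with a $y$-independent constant, a covering of $\mathbb{R}^{N}$ by balls of radius $R$ with finite overlap, and a final interpolation to recover the exponents below the $L^{2}$-critical threshold $2+\frac{8}{N}$. The only cosmetic difference is that you treat the whole range $[2+\frac{8}{N},4^{*})$ directly via the $\ell^{1}\hookrightarrow\ell^{s}$ inequality, whereas the classical version proves vanishing at the single exponent $2+\frac{8}{N}$ and interpolates in both directions; both are equally valid.
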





For any $u\in S(c)$, we define
\begin{equation}\label{eq06}
u_{t}(x)=t^{\frac{N}{4}}	u(\sqrt{t}x).
\end{equation}
Clearly, $u_{t}\in S(c)$ for any $t>0$. Furthermore,
\begin{equation}\label{eq07}
E_{p,q}(u_{t})=\frac{1}{2}t^{2}\|\Delta u\|_{2}^{2}-\frac{\mu}{q}t^{\frac{N(q-2)}{4}}\|u\|_{q}^{q}-\frac{1}{p}t^{\frac{N(p-2)}{4}}\|u\|_{p}^{p}
\end{equation}
and
$$Q_{p,q}(u_{t})=t^{2}\|\Delta u\|_{2}^{2}-\mu\gamma_{q}t^{\frac{N(q-2)}{4}}\|u\|_{q}^{q}-\gamma_{p}t^{\frac{N(p-2)}{4}}\|u\|_{p}^{p}.$$
Then one has the following properties for $E_{p,q}(u_{t})$ and $Q_{p,q}(u_{t})$.
\begin{lemma}\label{uni}
	Let $N\geq5$, $c>0$, $\mu>0$ and $2+\frac{8}{N}\leq q<p\leq4^*$. If $q=2+\frac{8}{N}$, we further assume that $\mu c^{\frac{4}{N}}<\frac{N+4}{NC^q_{N,q}}$. Then for any $u\in S(c)$, there exists a unique $t_{u}\in(0,+\infty)$ for which  $u_{t_{u}}\in\mathcal{Q}_{p,q}(c)$. Moreover, $t_{u}$ is the unique critical point of $E_{p,q}(u_{t})$ such that $E_{p,q}(u_{t_{u}})=\max_{t\in(0,+\infty)}E_{p,q}(u_{t})$ and the function $u\mapsto E_{p,q}(u_{t_{u}})$ is concave  on $[t_{u},+\infty)$. In particular, if $Q_{p,q}(u)\leq0$, then $t_{u}\in(0,1]$.	
\end{lemma}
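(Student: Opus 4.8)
The plan is to reduce the whole statement to the analysis of the single–variable function $\varphi(t):=E_{p,q}(u_t)$ whose explicit expression is \eqref{eq07}. The pivotal observation is the scaling identity
$$t\,\varphi'(t)=Q_{p,q}(u_t),\qquad t>0,$$
obtained by differentiating \eqref{eq07} and comparing with the displayed formula for $Q_{p,q}(u_t)$, using $\tfrac{N(r-2)}{4r}=\gamma_r$. Thus, for $t>0$, the relation $u_t\in\mathcal{Q}_{p,q}(c)$ (that is $Q_{p,q}(u_t)=0$) is equivalent to $\varphi'(t)=0$, and the existence and uniqueness of $t_u$, its maximality, and the final claim will all follow from the sign behaviour of $\varphi'$.

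Set $a=\|\Delta u\|_2^2>0$, $b=\mu\|u\|_q^q>0$, $d=\|u\|_p^p>0$, $\alpha=\tfrac{N(q-2)}{4}$ and $\beta=\tfrac{N(p-2)}{4}$, so that
$$Q_{p,q}(u_t)=t^2\big(a-b\gamma_q t^{\alpha-2}-d\gamma_p t^{\beta-2}\big).$$
Since $q\ge 2+\tfrac8N$ and $p>q$, we have $\beta>\alpha\ge2$, hence $\beta-2>0$. I would split into two cases. If $q>2+\tfrac8N$, then $\alpha>2$ as well and the bracket $g(t):=a-b\gamma_q t^{\alpha-2}-d\gamma_p t^{\beta-2}$ is strictly decreasing from $g(0^+)=a>0$ to $-\infty$; hence it has a unique zero $t_u$, with $g>0$ on $(0,t_u)$ and $g<0$ on $(t_u,\infty)$. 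If $q=2+\tfrac8N$, then $\alpha=2$ and $Q_{p,q}(u_t)=t^2\big((a-b\gamma_q)-d\gamma_p t^{\beta-2}\big)$; here the mass assumption is used. In this borderline case $q\gamma_q=2$, $q(1-\gamma_q)=\tfrac8N$ and $\gamma_q=\tfrac{N}{N+4}$, so Lemma \ref{gni} with $r=q$ gives $\|u\|_q^q\le C^q_{N,q}\,a\,c^{4/N}$, whence $b\gamma_q\le \mu\gamma_q C^q_{N,q}c^{4/N}\,a$; the hypothesis $\mu c^{4/N}<\tfrac{N+4}{NC^q_{N,q}}$ is exactly $\mu\gamma_q C^q_{N,q}c^{4/N}<1$, so $a-b\gamma_q>0$ and the bracket is again strictly decreasing from a positive value to $-\infty$. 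In either case $\varphi'=t^{-1}Q_{p,q}(u_t)$ changes sign exactly once, from $+$ to $-$, so $t_u$ is the unique critical point of $\varphi$ on $(0,\infty)$ and $\varphi(t_u)=\max_{t>0}\varphi(t)$.

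For the concavity on $[t_u,+\infty)$ I would differentiate once more,
$$\varphi''(t)=a-b\gamma_q(\alpha-1)t^{\alpha-2}-d\gamma_p(\beta-1)t^{\beta-2},$$
and use $\varphi'(t_u)=0$, i.e. $a=b\gamma_q t_u^{\alpha-2}+d\gamma_p t_u^{\beta-2}$, to eliminate $a$ and obtain
$$\varphi''(t_u)=b\gamma_q(2-\alpha)t_u^{\alpha-2}+d\gamma_p(2-\beta)t_u^{\beta-2}<0,$$
since $2-\alpha\le0$, $2-\beta<0$ and $d>0$. Because $\alpha-2\ge0$ and $\beta-2>0$, the map $\varphi''$ is nonincreasing on $(0,\infty)$, so $\varphi''(t)\le\varphi''(t_u)<0$ for all $t\ge t_u$, giving concavity of $t\mapsto E_{p,q}(u_t)$ on $[t_u,+\infty)$ (I read the statement's "$u\mapsto E_{p,q}(u_{t_u})$" as this map). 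Finally, if $Q_{p,q}(u)\le0$, then evaluating the scaling identity at $t=1$ yields $\varphi'(1)=Q_{p,q}(u_1)=Q_{p,q}(u)\le0$; since $\varphi'>0$ on $(0,t_u)$, this forces $1\ge t_u$, i.e. $t_u\in(0,1]$.

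The step I expect to be the main obstacle is the mass–critical borderline $q=2+\tfrac8N$: there the quadratic term $\tfrac12 a t^2$ of $\varphi$ competes directly with the $t^{\alpha}=t^2$ term coming from the $L^q$ nonlinearity, and the monotonicity argument collapses unless the leading coefficient $a-b\gamma_q$ is positive. Keeping it positive is precisely what the sharp Gagliardo–Nirenberg inequality together with the smallness condition $\mu c^{4/N}<\tfrac{N+4}{NC^q_{N,q}}$ provide; away from this threshold ($q>2+\tfrac8N$) the statement is a soft consequence of the strict monotonicity of $g$.
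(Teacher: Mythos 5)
Your proposal is correct and follows essentially the same route as the paper: factor $Q_{p,q}(u_t)=t^2 g(t)$ with $g$ strictly decreasing (using the sharp Gagliardo--Nirenberg inequality and the smallness condition $\mu c^{4/N}<\tfrac{N+4}{NC^q_{N,q}}$ precisely in the borderline case $q=2+\tfrac8N$), identify $t\varphi'(t)=Q_{p,q}(u_t)$ to get the unique critical point and maximality, and use the relation $Q_{p,q}(u_{t_u})=0$ in the second derivative to get concavity on $[t_u,+\infty)$ and the bound $t_u\le1$ when $Q_{p,q}(u)\le0$. The only cosmetic difference is that the paper substitutes $t=st_u$ and checks negativity of the second derivative for $s\ge1$ directly, whereas you deduce the same from $\varphi''(t_u)<0$ plus the monotonicity of $\varphi''$; these are interchangeable.
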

\begin{proof}
First we let $$Q_{p,q}(u_{t})=t^{2}(\|\Delta u\|_{2}^{2}-\mu\gamma_{q}t^{\frac{N(q-2)}{4}-2}\|u\|_{q}^{q}-\gamma_{p}t^{\frac{N(p-2)}{4}-2}\|u\|_{p}^{p})=t^{2}h(t).$$
 When $2+\frac{8}{N}<q<p\leq4^*$, we have $\frac{N(p-2)}{4}-2>\frac{N(q-2)}{4}-2>0$. When $2+\frac{8}{N}=q<p\leq4^*$ and $\mu c^{\frac{4}{N}}<\frac{N+4}{NC^q_{N,q}}$, we have $\frac{N(p-2)}{4}-2>\frac{N(q-2)}{4}-2=0$. Using the Gagliardo-Nirenberg inequality, we get
 $$\mu\gamma_{q}\|u\|_{q}^{q}\leq\mu\gamma_{q}C^{q}_{N,q}\|\Delta u\|_{2}^{q\gamma_{q}}\|u\|_{2}^{q(1-\gamma_{q})}<\|\Delta u\|_{2}^{2}.$$
Hence, for $2+\frac{8}{N}\leq q<p\leq4^*$, it always holds that $h(t)>0$ for $t$ small enough, $h(t)<0$ for $t$ large enough and $h'(t)<0$ for all $t\in(0,+\infty)$. So $Q_{p,q}(u_{t})$ has a unique zero $t_{u}$.

 By direct calculations, we have $\frac{d}{dt}
 E_{p,q}(u_{t})=t^{-1}Q_{p,q}(u_{t})$, $E_{p,q}(u_{t})>0$ for $t>0$ small enough and $\lim\limits_{t\rightarrow\infty}E_{p,q}(u_{t})=-\infty$. Thus, $t_{u}$ is the unique critical point of $E_{p,q}(u_{t})$ and $E_{p,q}(u_{t_{u}})=\max_{t\in(0,+\infty)}E_{p,q}(u_{t})$.
 Now writing $t=st_{u}$, we have
 \begin{align*}
 	\frac{d^2}{d^2t}E_{p,q}(u_{t})&=\|\Delta u\|_{2}^{2}-\mu\gamma_{q}(\frac{N(q-2)}{4}-1)t^{\frac{N(q-2)}{4}-2}\|u\|_{q}^{q}-\gamma_{p}(\frac{N(p-2)}{4}-1)t^{\frac{N(p-2)}{4}-2}\|u\|_{p}^{p}\\
 	&=\|\Delta u\|_{2}^{2}-\mu\gamma_{q}(\frac{N(q-2)}{4}-1)s^{\frac{N(q-2)}{4}-2}t_{u}^{\frac{N(q-2)}{4}-2}\|u\|_{q}^{q}\\
 	&-\gamma_{p}(\frac{N(p-2)}{4}-1)s^{\frac{N(p-2)}{4}-2}t_{u}^{\frac{N(p-2)}{4}-2}\|u\|_{p}^{p}\\
 	&=\frac{1}{t_{u}^{2}}\big[t_{u}^{2}\|\Delta u\|_{2}^{2}-\mu\gamma_{q}(\frac{N(q-2)}{4}-1)s^{\frac{N(q-2)}{4}-2}t_{u}^{\frac{N(q-2)}{4}}\|u\|_{q}^{q}\\
 	&-\gamma_{p}(\frac{N(p-2)}{4}-1)s^{\frac{N(p-2)}{4}-2}t_{u}^{\frac{N(p-2)}{4}}\|u\|_{p}^{p}\big].\\
 \end{align*}
 Since
 $$0=Q_{p,q}(u_{t_{u}})=t_{u}^{2}\|\Delta u\|_{2}^{2}-\mu\gamma_{q}t_{u}^{\frac{N(q-2)}{4}}\|u\|_{q}^{q}-\gamma_{p}t_{u}^{\frac{N(p-2)}{4}}\|u\|_{p}^{p},$$
we infer that $\frac{d^2}{d^2t}E_{p,q}(u_{t})<0$ for any $s\geq1$. In addition, if $Q_{p,q}(u)\leq0$, it follows that $0<t_{u}\leq1$. This completes the proof.
\end{proof}
\begin{lemma}\label{rem}
Let $2+\frac{8}{N}\leq q<p\leq4^*$. For any $u\in S(c)$, the map $u\mapsto t_{u}$ is of class $C^1$, where $t_{u}$ is defined in Lemma \ref{uni}.	
\end{lemma}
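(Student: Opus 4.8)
The plan is to realize $u\mapsto t_u$ as an implicitly defined function and to apply the Implicit Function Theorem on the constraint manifold $S(c)$. Define
$$F:S(c)\times(0,+\infty)\to\mathbb{R},\qquad F(u,t):=Q_{p,q}(u_{t})=t^{2}\|\Delta u\|_{2}^{2}-\mu\gamma_{q}t^{\frac{N(q-2)}{4}}\|u\|_{q}^{q}-\gamma_{p}t^{\frac{N(p-2)}{4}}\|u\|_{p}^{p}.$$
By Lemma \ref{uni}, for each $u\in S(c)$ the equation $F(u,\cdot)=0$ has the \emph{unique} positive solution $t=t_{u}$, so that $F(u,t_{u})=0$ and $t_{u}$ is characterized globally by this identity.

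First I would verify that $F$ is of class $C^1$ on $S(c)\times(0,+\infty)$ and compute the $t$-derivative at $t_u$. The maps $u\mapsto\|\Delta u\|_{2}^{2}$, $u\mapsto\|u\|_{q}^{q}$ and $u\mapsto\|u\|_{p}^{p}$ are $C^1$ on $H^{2}$ (the same regularity already used for $E_{p,q}$, valid since $2<q<p\leq4^*$ and $H^{2}\hookrightarrow L^{r}$ for $2\leq r\leq4^*$); their restrictions to the $C^1$ submanifold $S(c)$ stay $C^1$, and the coefficients $t^{2},t^{\frac{N(q-2)}{4}},t^{\frac{N(p-2)}{4}}$ are smooth in $t$ on $(0,+\infty)$, so $F$ is jointly $C^1$. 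Writing $F(u,t)=t^{2}h(t)$ with $h(t)=\|\Delta u\|_{2}^{2}-\mu\gamma_{q}t^{\frac{N(q-2)}{4}-2}\|u\|_{q}^{q}-\gamma_{p}t^{\frac{N(p-2)}{4}-2}\|u\|_{p}^{p}$ as in Lemma \ref{uni}, we have $h(t_{u})=0$, and since $\frac{\partial F}{\partial t}(u,t)=2th(t)+t^{2}h'(t)$,
$$\frac{\partial F}{\partial t}(u,t_{u})=t_{u}^{2}h'(t_{u})<0,$$
because Lemma \ref{uni} yields $h'(t)<0$ for all $t\in(0,+\infty)$. (Equivalently, since $\frac{d}{dt}E_{p,q}(u_{t})=t^{-1}F(u,t)$ one finds $\frac{\partial F}{\partial t}(u,t_{u})=t_{u}\frac{d^{2}}{dt^{2}}E_{p,q}(u_{t})\big|_{t=t_{u}}<0$ by the strict concavity established there.) In particular the non-degeneracy $\frac{\partial F}{\partial t}(u,t_{u})\neq0$ holds.

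Next I would invoke the Implicit Function Theorem. Fix $u_{0}\in S(c)$ and put $t_{0}:=t_{u_{0}}$. Since $F$ is $C^1$ with $F(u_{0},t_{0})=0$ and $\partial_{t}F(u_{0},t_{0})\neq0$, there exist a neighborhood $U$ of $u_{0}$ in $S(c)$, an interval $I\ni t_{0}$, and a $C^1$ map $\tau:U\to I$ with $F(u,\tau(u))=0$ for all $u\in U$. By the global uniqueness of the zero from Lemma \ref{uni}, $\tau(u)$ must equal $t_{u}$ for every $u\in U$; hence $u\mapsto t_{u}$ coincides locally with the $C^1$ map $\tau$. As $u_{0}\in S(c)$ is arbitrary and the overlapping local solutions agree by uniqueness, $u\mapsto t_{u}$ is of class $C^1$ on all of $S(c)$.

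The only genuine obstacle is the non-degeneracy $\partial_{t}F(u,t_{u})\neq0$; the $C^1$-regularity of $F$ and the patching of local solutions are routine. This non-degeneracy is precisely the strict monotonicity $h'(t)<0$ (equivalently the strict concavity of $t\mapsto E_{p,q}(u_{t})$ at its unique maximum) already furnished by Lemma \ref{uni}, which shows that $t_{u}$ is a simple zero and thereby makes the Implicit Function Theorem applicable.
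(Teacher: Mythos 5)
Your proposal is correct and takes essentially the same route as the paper: the paper applies the Implicit Function Theorem to $\Psi(t,u)=\psi'_{u}(t)=t^{-1}Q_{p,q}(u_{t})$, which differs from your $F(u,t)=Q_{p,q}(u_{t})$ only by the positive factor $t$, and both obtain the required non-degeneracy ($\partial_{t}\Psi(t_{u},u)=\psi''_{u}(t_{u})<0$ in the paper, $\partial_{t}F(u,t_{u})=t_{u}^{2}h'(t_{u})<0$ in yours) from Lemma \ref{uni}. Your write-up merely makes explicit the joint $C^1$-regularity of the functional and the patching of local implicit solutions via global uniqueness, steps the paper leaves implicit.
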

\begin{proof}
For $2+\frac{8}{N}\leq q<p\leq4^*$ and $u\in S(c)$, we consider the $C^1$ function $\Psi:\mathbb{R}\times S(c)\mapsto\mathbb{R}$ defined by $\Psi(t,u)=\psi'_{u}(t)$, where $\psi_{u}(t)=E_{p,q}(u_{t})$. By direct computation and the definition of $t_{u}$, we have
$$\Psi(t_{u},u)=\frac{1}{t_{u}}Q_{p,q}(u_{t_{u}})=0\quad\mathrm{and}\quad\partial_{t}\Psi(t_{u},u)=\psi''_{u}(t_{u})<0.$$
 By the Implicit Function Theorem, we can get that the map $u\mapsto t_{u}$ is of class $C^1$.
 \end{proof}

\begin{lemma}\label{cpq}
Let $N\geq5$, $c>0$, $\mu>0$ and $2+\frac{8}{N}\leq q<p\leq4^*$. If $q=2+\frac{8}{N}$, we further assume that $\mu c^{\frac{4}{N}}<\frac{N+4}{NC^q_{N,q}}$. Then,
\begin{description}
	\item $(i)\ \mathcal{Q}_{p,q}(c)\not=\emptyset;$
	\item $(ii)\ \inf\limits_{u\in\mathcal{Q}_{p,q}(c)}\|\Delta u\|_{2}^{2}>0;$
	\item $(iii)\ m_{p,q}(c)>0;$
	\item $(iv)\ E_{p,q}$ is coercive on $\mathcal{Q}_{p,q}(c)$, that is $E_{p,q}(u_{n})\rightarrow+\infty$ for any $\{u_{n}\}\subset\mathcal{Q}_{p,q}(c)$ with $\|u_{n}\|_{H^2}\rightarrow\infty$\textcolor[rgb]{0.7,0.04,0.8}{.}
\end{description}	
\end{lemma}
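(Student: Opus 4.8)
The plan is to derive, on $\mathcal{Q}_{p,q}(c)$, a single algebraic identity that exposes the sign structure of $E_{p,q}$, and then to read off all four statements from it. The key observation is that subtracting the multiple $\frac{1}{q\gamma_q}Q_{p,q}(u)$ from $E_{p,q}(u)$ annihilates the $L^q$-term exactly; since $Q_{p,q}(u)=0$ for $u\in\mathcal{Q}_{p,q}(c)$, this gives
$$E_{p,q}(u)=\Big(\frac{1}{2}-\frac{1}{q\gamma_q}\Big)\|\Delta u\|_2^2+\Big(\frac{\gamma_p}{q\gamma_q}-\frac{1}{p}\Big)\|u\|_p^p.$$
Because $q\geq 2+\tfrac{8}{N}$ forces $q\gamma_q=\tfrac{N(q-2)}{4}\geq 2$ and $p>q$ forces $p\gamma_p=\tfrac{N(p-2)}{4}>q\gamma_q$, the first coefficient is nonnegative (vanishing precisely when $q=2+\tfrac{8}{N}$), while the second coefficient $\frac{\gamma_p}{q\gamma_q}-\frac{1}{p}=\frac{p\gamma_p-q\gamma_q}{pq\gamma_q}$ is strictly positive. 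This identity is the backbone of the whole lemma.

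Part $(i)$ is immediate: $S(c)\neq\emptyset$, and for any fixed $u\in S(c)$ Lemma \ref{uni} produces a unique $t_u>0$ with $u_{t_u}\in\mathcal{Q}_{p,q}(c)$. For $(ii)$ I would combine the constraint $\|\Delta u\|_2^2=\mu\gamma_q\|u\|_q^q+\gamma_p\|u\|_p^p$ with the sharp Gagliardo–Nirenberg inequality of Lemma \ref{gni}. Writing $A:=\|\Delta u\|_2^2$ and using $\|u\|_2^2=c$, this yields
$$A\leq \mu\gamma_q C_{N,q}^q\, c^{\frac{q(1-\gamma_q)}{2}}A^{\frac{q\gamma_q}{2}}+\gamma_p C_{N,p}^p\, c^{\frac{p(1-\gamma_p)}{2}}A^{\frac{p\gamma_p}{2}}.$$
When $q>2+\tfrac{8}{N}$ both exponents exceed $1$, so $A$ cannot be arbitrarily small; when $q=2+\tfrac{8}{N}$ the first exponent equals $1$ and its coefficient is $\mu\gamma_q C_{N,q}^q c^{4/N}=\mu\frac{N}{N+4}C_{N,q}^q c^{4/N}<1$ by the standing assumption $\mu c^{4/N}<\frac{N+4}{NC_{N,q}^q}$, after which the remaining superlinear term again prevents $A\to 0$. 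In either case $\inf_{\mathcal{Q}_{p,q}(c)}\|\Delta u\|_2^2>0$.

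For $(iii)$ and $(iv)$ I would feed these facts back into the displayed identity. If $q>2+\tfrac{8}{N}$ the first coefficient $a:=\frac12-\frac{1}{q\gamma_q}$ is strictly positive, so $E_{p,q}(u)\geq a\|\Delta u\|_2^2$, which is bounded below by a positive constant through $(ii)$, giving $m_{p,q}(c)>0$; moreover it tends to $+\infty$ whenever $\|\Delta u_n\|_2\to\infty$, which—since the mass is fixed, so $\|u\|_{H^2}^2=\|\Delta u\|_2^2+c$—is exactly the coercivity claim $(iv)$. The delicate borderline case is $q=2+\tfrac{8}{N}$, where $a=0$ and the identity reduces to $E_{p,q}(u)=\big(\tfrac{\gamma_p}{2}-\tfrac1p\big)\|u\|_p^p$. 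Here I would transfer the lower bound to the $L^p$-term: Gagliardo–Nirenberg gives $\mu\gamma_q\|u\|_q^q\leq\alpha\|\Delta u\|_2^2$ with $\alpha=\mu\frac{N}{N+4}C_{N,q}^q c^{4/N}<1$, so the constraint forces $\gamma_p\|u\|_p^p=\|\Delta u\|_2^2-\mu\gamma_q\|u\|_q^q\geq(1-\alpha)\|\Delta u\|_2^2$. Combined with $(ii)$ this bounds $\|u\|_p^p$ below by a positive constant (yielding $(iii)$), and combined with $\|\Delta u_n\|_2\to\infty$ it forces $\|u_n\|_p^p\to\infty$ (yielding $(iv)$). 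I expect this mass-subcritical threshold case to be the main obstacle, since it is precisely where the assumption $\mu c^{4/N}<\frac{N+4}{NC_{N,q}^q}$ is indispensable and where positivity and coercivity must be routed through the $L^p$-term rather than through $\|\Delta u\|_2^2$.
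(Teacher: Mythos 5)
Your proposal is correct and follows essentially the same route as the paper: part $(i)$ via Lemma \ref{uni}, part $(ii)$ by feeding the constraint $\|\Delta u\|_2^2=\mu\gamma_q\|u\|_q^q+\gamma_p\|u\|_p^p$ into the Gagliardo--Nirenberg inequality, and parts $(iii)$--$(iv)$ via the identity $E_{p,q}(u)=\bigl(\tfrac12-\tfrac{1}{q\gamma_q}\bigr)\|\Delta u\|_2^2+\bigl(\tfrac{\gamma_p}{q\gamma_q}-\tfrac1p\bigr)\|u\|_p^p$ together with the transfer $\gamma_p\|u\|_p^p\geq\bigl(1-\tfrac{\mu NC^q_{N,q}}{N+4}c^{\frac4N}\bigr)\|\Delta u\|_2^2$ in the borderline case $q=2+\tfrac8N$, which is exactly the paper's inequality \eqref{eq8801}. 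The only cosmetic difference is that for $p=4^*$ the paper treats the critical term as a separate case using the Sobolev inequality $\mathcal{S}\|u\|_{4^*}^2\leq\|\Delta u\|_2^2$ (since its Lemma \ref{gni} is stated only for $r<4^*$), whereas you apply the endpoint form of Gagliardo--Nirenberg directly; the resulting estimate is identical.
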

\begin{proof}
$(i)$\ By Lemma \ref{uni}, for any $u\in S(c)$, there always exists $t_{u}>0$ such that $u_{t_{u}}\in \mathcal{Q}_{p,q}(c)$, it follows that $\mathcal{Q}_{p,q}(c)\neq\emptyset$.

$(ii)$\ We divide the proof into two cases.

Case\ $1:$ $(p<4^*)\ $For any $u\in\mathcal{Q}_{p,q}(c)$, by the Gagliardo-Nirenberg inequality, we have
\begin{align*}\label{eq114}
\|\Delta u\|_{2}^{2}&=\mu\gamma_{q}\|u\|_{q}^{q}+\gamma_{p}\|u\|_{p}^{p}\\
&\leq\mu\gamma_{q}C^{q}_{N,q}\|u\|_{2}^{q(1-\gamma_{q})}\|\Delta u\|_{2}^{q\gamma_{q}}+\gamma_{p}C^{p}_{N,p}\|u\|_{2}^{p(1-\gamma_{p})}\|\Delta u\|_{2}^{p\gamma_{p}}\\
&=\mu\gamma_{q}C^{q}_{N,q}(\sqrt{c})^{q(1-\gamma_{q})}\|\Delta u\|_{2}^{q\gamma_{q}}+\gamma_{p}C^{p}_{N,p}(\sqrt{c})^{p(1-\gamma_{p})}\|\Delta u\|_{2}^{p\gamma_{p}}.\\
\end{align*}
If $2+\frac{8}{N}<q<p$, then $p\gamma_{p}>q\gamma_{q}>2$.  It follows that there exists a constant $C>0$ such that $\|\Delta u\|_{2}^{2}\geq C$.
If $2+\frac{8}{N}=q<p$ and $\mu c^{\frac{4}{N}}<\frac{N+4}{NC^q_{N,q}}$, then $p\gamma_{p}>q\gamma_{q}=2$ and $\frac{\mu NC^q_{N,q}}{N+4}c^{\frac{4}{N}}<1$. This also implies that there exists a constant $C>0$ such that $\|\Delta u\|_{2}^{2}\geq C$.

Case\ $2:$ $(p=4^*)\ $The proof here is similar to Case 1. The only difference is that we estimate the term $\int_{\mathbb{R}^{N}}|u|^{4^*}dx$ by using
$$\mathcal{S}\|u\|^{2}_{4^*}\leq\|\Delta u\|^{2}_{2}.$$

$(iii)$\ If $2+\frac{8}{N}<q<p$, we can easily get $\mu\gamma_{q}\|u\|_{q}^{q}+\gamma_{p}\|u\|_{p}^{p}=\|\Delta u\|_{2}^{2}\geq C$. If $2+\frac{8}{N}=q<p$ and $\mu c^{\frac{4}{N}}<\frac{N+4}{NC^q_{N,q}}$, it follows that
\begin{equation}\label{eq8801}
\gamma_{p}\|u\|_{p}^{p}\geq(1-\frac{\mu NC^q_{N,q}}{N+4}c^{\frac{4}{N}})\|\Delta u\|_{2}^{2}\geq C(1-\frac{\mu NC^q_{N,q}}{N+4}c^{\frac{4}{N}}).	
\end{equation}
Then, for any $u\in \mathcal{Q}_{p,q}(c)$, we have
\begin{align}\label{4-23-1}
E_{p,q}(u)&=(\frac{1}{2}-\frac{1}{q\gamma_{q}})\|\Delta u\|_{2}^{2}+(\frac{\gamma_{p}}{q\gamma_{q}}-\frac{1}{p})\|u\|_{p}^{p}\nonumber\\
&=(\frac{1}{2}-\frac{1}{p\gamma_{p}})\|\Delta u\|_{2}^{2}+(\frac{\gamma_{q}}{p\gamma_{p}}-\frac{1}{q})\mu\|u\|_{q}^{q}\nonumber\\
&=(\frac{\gamma_{p}}{2}-\frac{1}{p})\|u\|_{p}^{p}+(\frac{\gamma_{q}}{2}-\frac{1}{q})\mu\|u\|_{q}^{q}\ge C_1
\end{align}
for some $C_1>0$, which implies that $m_{p,q}(c)>0$.

$(iv)$\ By (\ref{4-23-1}) it is easily seen that the conclusion holds.
\end{proof}

Since for given $c>0$, one can see from Lemma \ref{cpq} that the infimum
$$m_{p,q}(c)=\inf\limits_{u\in\mathcal{Q}_{p,q}(c)}E_{p,q}(u)$$
is well-defined and strictly positive. We will characterize further the behavior of $m_{p,q}(c)$ when $c>0$ varies.

\begin{lemma}\label{dea}
The function $c\mapsto m_{p,q}(c)$ is non-increasing on $(0,+\infty)$.
\end{lemma}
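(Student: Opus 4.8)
The plan is to prove the monotonicity by a rearrangement/fibering argument: fix $0<c_1<c_2$, set $\theta=c_2/c_1>1$, and transport an \emph{arbitrary} element of $\mathcal{Q}_{p,q}(c_1)$ to a competitor in $\mathcal{Q}_{p,q}(c_2)$ whose energy does not exceed it. First I would take any $u\in\mathcal{Q}_{p,q}(c_1)$ and set $v=\sqrt{\theta}\,u$, so that $\|v\|_2^2=\theta c_1=c_2$, i.e. $v\in S(c_2)$. By Lemma \ref{uni} applied at mass $c_2$ there is a unique $t_v>0$ with $v_{t_v}\in\mathcal{Q}_{p,q}(c_2)$ and $E_{p,q}(v_{t_v})=\max_{t>0}E_{p,q}(v_t)$; in particular $m_{p,q}(c_2)\le E_{p,q}(v_{t_v})$.

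The heart of the argument is a term-by-term comparison of the fibering maps of $v$ and $u$. Using \eqref{eq07} together with $\|\Delta v\|_2^2=\theta\|\Delta u\|_2^2$, $\|v\|_q^q=\theta^{q/2}\|u\|_q^q$, $\|v\|_p^p=\theta^{p/2}\|u\|_p^p$, and then reparametrizing the fiber by $t=\theta^{-1/2}\tau$, I obtain
\begin{equation*}
E_{p,q}(v_{\theta^{-1/2}\tau})=\tfrac12\tau^2\|\Delta u\|_2^2-\tfrac{\mu}{q}\,\theta^{\frac q2(1-\gamma_q)}\tau^{\frac{N(q-2)}4}\|u\|_q^q-\tfrac1p\,\theta^{\frac p2(1-\gamma_p)}\tau^{\frac{N(p-2)}4}\|u\|_p^p ,
\end{equation*}
where I have used $\tfrac q2-\tfrac{N(q-2)}8=\tfrac q2(1-\gamma_q)$ (since $q\gamma_q=\tfrac{N(q-2)}4$) and likewise for $p$. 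Because $2<q<p\le 4^*$ forces $\gamma_q<1$ and $\gamma_p\le 1$, both exponents $\tfrac q2(1-\gamma_q)$ and $\tfrac p2(1-\gamma_p)$ are nonnegative, so $\theta^{\frac q2(1-\gamma_q)}\ge1$ and $\theta^{\frac p2(1-\gamma_p)}\ge1$ as $\theta>1$. Since these two terms carry a negative sign, this yields $E_{p,q}(v_{\theta^{-1/2}\tau})\le E_{p,q}(u_\tau)$ for every $\tau>0$.

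Taking the supremum over $\tau$ and using that $\tau\mapsto\theta^{-1/2}\tau$ is a bijection of $(0,+\infty)$ gives $\max_{t>0}E_{p,q}(v_t)\le\max_{\tau>0}E_{p,q}(u_\tau)$. Finally, since $u\in\mathcal{Q}_{p,q}(c_1)$, the uniqueness in Lemma \ref{uni} forces $t_u=1$, whence $E_{p,q}(u)=\max_{\tau>0}E_{p,q}(u_\tau)$. Combining these facts,
\begin{equation*}
m_{p,q}(c_2)\le E_{p,q}(v_{t_v})=\max_{t>0}E_{p,q}(v_t)\le\max_{\tau>0}E_{p,q}(u_\tau)=E_{p,q}(u).
\end{equation*}
As $u\in\mathcal{Q}_{p,q}(c_1)$ was arbitrary, taking the infimum over $\mathcal{Q}_{p,q}(c_1)$ gives $m_{p,q}(c_2)\le m_{p,q}(c_1)$.

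I expect the main obstacle to be correctly orienting the inequality. The naive amplitude scaling multiplies the kinetic term by $\theta$ but the nonlinear terms by $\theta^{q/2},\theta^{p/2}$, so one must first renormalize the kinetic coefficient (here via $t=\theta^{-1/2}\tau$) before exploiting that the leftover powers of $\theta$ sit precisely on the negative nonlinear terms; this is exactly where the constraint $\gamma_q,\gamma_p\le1$, i.e. $q,p\le 4^*$, enters, and it is what makes the argument cover the Sobolev critical case $p=4^*$ as well. A secondary point is admissibility in the boundary case $q=2+\tfrac8N$: the hypothesis $\mu c^{4/N}<\tfrac{N+4}{NC^q_{N,q}}$ of Lemma \ref{uni} tightens as $c$ increases, so the statement should be read on the range of masses where $m_{p,q}$ is defined, whereas for $q>2+\tfrac8N$ no such restriction occurs and the conclusion holds for all $c>0$.
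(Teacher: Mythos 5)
Your proof is correct and is essentially the paper's own argument in different coordinates: your competitor $\sqrt{\theta}\,u$ slid along the fiber to $t=\theta^{-1/2}$ is exactly the paper's rescaled function $v(x)=(c_1/c_2)^{\frac{N-4}{8}}u((c_1/c_2)^{\frac14}x)$, and your amplification factors $\theta^{\frac q2(1-\gamma_q)}=(c_1/c_2)^{\frac{q(N-4)-2N}{8}}$ on the negative terms coincide with the coefficients appearing in the paper's energy comparison. The only cosmetic difference is that you start from $u\in\mathcal{Q}_{p,q}(c_1)$ and use $t_u=1$, whereas the paper first identifies $m_{p,q}(c)$ with $\inf_{u\in S(c)}\max_{t>0}E_{p,q}(u_t)$ and then takes an arbitrary $u\in S(c_1)$.
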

\begin{proof}
First we set $\gamma(c)=\inf\limits_{u\in S(c)}\max\limits_{t>0}E_{p,q}(u_{t})$. We claim that $\gamma(c)=m_{p,q}(c)$.
Indeed, for any $u\in S(c)$, by Lemma \ref{uni}, there exists a $t_{u}>0$ such that $u_{t_{u}}\in\mathcal{Q}_{p,q}(c)$ and $\max\limits_{t>0}E_{p,q}(u_{t})=E_{p,q}(u_{t_{u}})\geq m_{p,q}(c)$, thus we get $\gamma(c)\geq m_{p,q}(c)$. On the other hand, for any $u\in\mathcal{Q}_{p,q}(c),\ \max\limits_{t>0}E_{p,q}(u_{t})=E_{p,q}(u)$, this immediately implies that $\gamma(c)\leq m_{p,q}(c)$. Then we conclude that $\gamma(c)=m_{p,q}(c)$.

Next we assume that $0<c_{1}<c_{2}<+\infty$. For any $u\in S(c_{1})$, define $v(x)=(\frac{c_{1}}{c_{2}})^{\frac{N-4}{8}}u((\frac{c_{1}}{c_{2}})^{\frac{1}{4}}x),\ \forall x\in \mathbb{R}^{N}$. then we have the followings:
$$\|v\|_{2}^{2}=c_{2},\quad \|\Delta v\|_{2}^{2}=\|\Delta u\|_{2}^{2},$$
$$\|v\|_{q}^{q}=(\frac{c_{1}}{c_{2}})^{\frac{q(N-4)-2N}{8}}\|u\|_{q}^{q},\quad\|v\|_{p}^{p}=(\frac{c_{1}}{c_{2}})^{\frac{p(N-4)-2N}{8}}\|u\|_{p}^{p}.$$
By the definition of $m_{p,q}(c)$ and the fact $\gamma(c)=m_{p,q}(c)$, we have
\begin{align*}
m_{p,q}(c_{2})&\leq\max_{t\in(0,+\infty)}E_{p,q}(v_{t})\\
&=E_{p,q}(v_{t_{v}})=E_{p,q}(t_{v}^{\frac{N}{4}}v(\sqrt{t_{v}}x))\\
&=E_{p,q}(t_{v}^{\frac{N}{4}}(\frac{c_{1}}{c_{2}})^{\frac{N-4}{8}}u((\frac{c_{1}}{c_{2}})^{\frac{1}{4}}\sqrt{t_{v}}x))\\
&=\frac{1}{2}\int_{\mathbb{R}^{N}}|\Delta u_{t_{v}}|^{2}dx-(\frac{c_{1}}{c_{2}})^{\frac{q(N-4)-2N}{8}}\frac{\mu}{q}\int_{\mathbb{R}^{N}}|u_{t_{v}}|^{q}dx-(\frac{c_{1}}{c_{2}})^{\frac{p(N-4)-2N}{8}}\frac{1}{p}\int_{\mathbb{R}^{N}}|u_{t_{v}}|^{p}dx\\
&<\frac{1}{2}\int_{\mathbb{R}^{N}}|\Delta u_{t_{v}}|^{2}dx-\frac{\mu}{q}\int_{\mathbb{R}^{N}}|u_{t_{v}}|^{q}dx-\frac{1}{p}\int_{\mathbb{R}^{N}}|u_{t_{v}}|^{p}dx\\
&=E_{p,q}(u_{t_{v}})\leq\max_{t\in(0,+\infty)}E_{p,q}(u_{t}),\\
\end{align*}
where $2+\frac{8}{N}\leq q<p\leq4^*$ and $N\geq5$. Since $u\in S(c_{1})$ is arbitrary, we have
\begin{equation}
	m_{p,q}(c_{2})\leq\inf_{u\in S(c_{1})}\max_{t\in(0,+\infty)}E_{p,q}(u_{t})=m_{p,q}(c_{1}).
\end{equation}
\end{proof}	
Next we will prove that $m_{p,q}(c)$ is continuous. Before that, we will show some lemmas which will be used in the proof for the continuity of $m_{p,q}(c)$.
\begin{lemma}\label{lio}
Let $2+\frac{8}{N}\leq q<p\leq4^*$. Let $\{u_{n}\}_{n\in\mathbb{N}^+}$ be a bounded sequence in $H^{2}(\mathbb{R}^{N})$ and $a_{n}$ be a sequence satisfying $\lim\limits_{n\rightarrow\infty}a_{n}=1$. Then
$$\lim\limits_{n\rightarrow\infty}\big|E_{p,q}(a_{n}u_{n})-E_{p,q}(u_{n})\big|=0.$$ 	
\end{lemma}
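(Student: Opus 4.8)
The plan is to exploit the explicit homogeneity of each term of $E_{p,q}$ under multiplication by a scalar. Since $E_{p,q}(u)=\frac{1}{2}\|\Delta u\|_{2}^{2}-\frac{\mu}{q}\|u\|_{q}^{q}-\frac{1}{p}\|u\|_{p}^{p}$ and the three functionals $\|\Delta\cdot\|_2^2$, $\|\cdot\|_q^q$, $\|\cdot\|_p^p$ are homogeneous of degree $2$, $q$, $p$ respectively, I would first write
\begin{equation*}
E_{p,q}(a_{n}u_{n})-E_{p,q}(u_{n})=\frac{a_n^2-1}{2}\|\Delta u_{n}\|_{2}^{2}-\frac{\mu(a_n^q-1)}{q}\|u_{n}\|_{q}^{q}-\frac{a_n^p-1}{p}\|u_{n}\|_{p}^{p}.
\end{equation*}
The entire statement then reduces to showing that each of these three products tends to $0$.

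Next I would control the three norms appearing as factors. Because $\{u_{n}\}$ is bounded in $H^{2}(\mathbb{R}^{N})$, the quantity $\|\Delta u_{n}\|_{2}^{2}$ is uniformly bounded by definition of the norm. For the $L^q$ and $L^p$ terms I would invoke the continuous Sobolev embeddings $H^{2}(\mathbb{R}^{N})\hookrightarrow L^{r}(\mathbb{R}^{N})$ valid for all $2\le r\le 4^{*}$ when $N\ge5$; since $2<q<p\le 4^{*}$, this yields constants independent of $n$ with $\|u_{n}\|_{q}^{q}\le C$ and $\|u_{n}\|_{p}^{p}\le C$. Equivalently, one may apply the Gagliardo–Nirenberg inequality of Lemma \ref{gni} together with the boundedness of $\|\Delta u_{n}\|_{2}$ and $\|u_{n}\|_{2}$ to reach the same conclusion in the subcritical range, and the Sobolev inequality $\mathcal{S}\|u\|_{4^*}^2\le\|\Delta u\|_2^2$ in the critical case $p=4^{*}$.

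Finally, since $a_{n}\to1$, continuity of the maps $a\mapsto a^{2}$, $a\mapsto a^{q}$, $a\mapsto a^{p}$ gives $a_n^2-1\to0$, $a_n^q-1\to0$ and $a_n^p-1\to0$. Each term in the displayed difference is therefore a vanishing coefficient multiplied by a uniformly bounded factor, so the whole expression tends to $0$, and by the triangle inequality $\big|E_{p,q}(a_{n}u_{n})-E_{p,q}(u_{n})\big|\to0$. There is no genuine obstacle here; the only point requiring care is that the uniform bounds on $\|u_{n}\|_{q}^{q}$ and $\|u_{n}\|_{p}^{p}$ remain valid in the Sobolev-critical endpoint $p=4^{*}$, which is precisely why the embedding $H^{2}(\mathbb{R}^{N})\hookrightarrow L^{4^{*}}(\mathbb{R}^{N})$ (needing $N\ge5$) is used rather than a compact subcritical one.
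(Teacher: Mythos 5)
Your proposal is correct and follows essentially the same route as the paper: expand the difference term by term using the homogeneity of $\|\Delta\cdot\|_2^2$, $\|\cdot\|_q^q$, $\|\cdot\|_p^p$, bound each norm uniformly via the $H^2$-boundedness of $\{u_n\}$, and let $a_n\to1$ kill the coefficients. The only difference is that you make explicit the Sobolev embedding $H^{2}(\mathbb{R}^{N})\hookrightarrow L^{r}(\mathbb{R}^{N})$, $2\le r\le 4^{*}$, which the paper's one-line proof leaves implicit.
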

\begin{proof}
It follows from direct calculations that
$$\big|E_{p,q}(a_{n}u_{n})-E_{p,q}(u_{n})\big|\leq\big|\frac{(a_{n}^{2}-1)}{2}\|\Delta u_{n}\|_{2}^{2}\big|+\big|\frac{\mu}{q}(a_{n}^{q}-1)\|u_{n}\|_{q}^{q}\big|+\big|\frac{1}{p}(a_{n}^{p}-1)\|u_{n}\|_{p}^{p}\big|$$	
Since$\{u_{n}\}_{n\in\mathbb{R}}$ is bounded in $H^{2}(\mathbb{R}^{N})$, we can directly get the conclusion.
\end{proof}
\begin{lemma}\label{tpc}
Let $2+\frac{8}{N}\leq q<p\leq4^*$. If $q=2+\frac{8}{N}$, we further assume that $\mu c^{\frac{4}{N}}<\frac{N+4}{NC^q_{N,q}}$. For any $u\in S(c)$, let $t_{u}$ be be given as in Lemma \ref{uni}. Then $t_{u}$ satisfies
$$t_{u}^{\frac{N(p-2)}{4}-2}<C\frac{\|\Delta u\|_{2}^{2}}{\|u\|_{p}^{p}}.$$
 where $C$ is a constant independent of $u$.
\end{lemma}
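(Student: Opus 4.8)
The plan is to read the desired bound straight off the Pohozaev identity $Q_{p,q}(u_{t_u})=0$ that defines $t_u$, discarding the strictly positive subcritical $q$-term.

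First I would recall from Lemma \ref{uni} that $t_u$ is the unique zero of $Q_{p,q}(u_t)$, so that
$$t_u^2\|\Delta u\|_2^2=\mu\gamma_q t_u^{\frac{N(q-2)}{4}}\|u\|_q^q+\gamma_p t_u^{\frac{N(p-2)}{4}}\|u\|_p^p.$$
Since $u\in S(c)$ with $c>0$ forces $u\not\equiv 0$, the three quantities $\|\Delta u\|_2^2$, $\|u\|_q^q$ and $\|u\|_p^p$ are all strictly positive, and $\mu,\gamma_q>0$. Hence the first term on the right-hand side is strictly positive, and dropping it yields the strict inequality
$$t_u^2\|\Delta u\|_2^2>\gamma_p t_u^{\frac{N(p-2)}{4}}\|u\|_p^p.$$

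Next I would divide both sides by $\gamma_p t_u^2\|u\|_p^p>0$ to isolate the relevant power of $t_u$, obtaining
$$t_u^{\frac{N(p-2)}{4}-2}<\frac{1}{\gamma_p}\frac{\|\Delta u\|_2^2}{\|u\|_p^p}.$$
This is precisely the claimed estimate with $C=1/\gamma_p$; because $\gamma_p=\frac{N(p-2)}{4p}$ depends only on $N$ and $p$, the constant $C$ is independent of $u$, as required.

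There is no genuine obstacle here: the statement is an immediate algebraic consequence of the defining relation for $t_u$ together with the positivity of the $q$-term. The only points needing a word of care are that $t_u>0$ (guaranteed by Lemma \ref{uni}) and that $\|u\|_p^p>0$ (guaranteed by $u\in S(c)$ with $c>0$), so that the division is legitimate. I would also note that the argument is uniform across the entire range $2+\frac{8}{N}\le q<p\le 4^*$, including the Sobolev-critical endpoint $p=4^*$, since it never invokes the Gagliardo--Nirenberg inequality and uses only the sign of each term appearing in $Q_{p,q}$.
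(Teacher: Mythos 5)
Your proof is correct: starting from $Q_{p,q}(u_{t_u})=0$, discarding the strictly positive $q$-term, and dividing by $\gamma_p t_u^2\|u\|_p^p>0$ is legitimate (the positivity of $t_u$, $\|u\|_q^q$, $\|u\|_p^p$ holds exactly as you say), and it yields the claim with $C=1/\gamma_p=\frac{4p}{N(p-2)}$, a constant depending only on $N$ and $p$. Your route is, however, genuinely shorter than the paper's. The paper does not drop the $q$-term at the level of the Pohozaev identity; instead it introduces $f(t)=\mu|t|^{q-2}t+|t|^{p-2}t$ and $F(t)=\frac{\mu}{q}|t|^{q}+\frac{1}{p}|t|^{p}$, proves the pointwise Ambrosetti--Rabinowitz-type inequality $f(t)t>(2+\frac{8}{N})F(t)$ for $t\neq0$, rewrites $t_u^2\|\Delta u\|_2^2=\frac{N}{4}\int_{\mathbb{R}^N}\bigl(f(u_{t_u})u_{t_u}-2F(u_{t_u})\bigr)dx$, bounds this below by $2\int_{\mathbb{R}^N}F(u_{t_u})dx$, and only then discards the $q$-part, arriving at the constant $C=\frac{p}{2}$. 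Your constant is actually sharper, since $p>2+\frac{8}{N}$ gives $\frac{4p}{N(p-2)}<\frac{p}{2}$; and for the only use made of this lemma (showing $t_h$ stays bounded in the continuity argument of Lemma \ref{ckj}), any $u$-independent constant suffices. What the paper's detour buys is robustness: the argument via $f$, $F$ and the superquadraticity inequality would survive for nonlinearities that are not explicit sums of two powers, where one cannot simply ``drop a term,'' whereas your argument leans on the specific combined-power structure of $Q_{p,q}$ — which is all that is needed here.
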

\begin{proof}
For convenience, in this part, we set $f(t)=\mu|t|^{q-2}t+|t|^{p-2}t$, then $F(t)=\mu\frac{1}{q}|t|^{q}+\frac{1}{p}|t|^{p}$. We claim that
$$f(t)t>(2+\frac{8}{N})F(t)>0,\quad\mathrm{for}\ \mathrm{all}\ t\not=0.$$
Indeed, when $2+\frac{8}{N}<q<p\leq4^*$, we obviously have
$$f(t)t-(2+\frac{8}{N})F(t)=\mu(1-\frac{2+8/N}{q})|t|^{q}+(1-\frac{2+8/N}{p})|t|^{p}>0,\quad\mathrm{for}\ \mathrm{all}\ t\not=0.$$
When $2+\frac{8}{N}=q<p\leq4^*$, we still find that
$$f(t)t-(2+\frac{8}{N})F(t)=(1-\frac{2+8/N}{p})|t|^{p}>0,\quad\mathrm{for}\ \mathrm{all}\ t\not=0.$$
Therefore, we have the following inequality,
\begin{equation}\label{eq7701}
f(t)t-2F(t)>\frac{8}{N}F(t),\quad\mathrm{for}\ \mathrm{all}\ t\not=0.
\end{equation}
According to Lemma \ref{uni}, we have $u_{t_{u}}\in\mathcal{Q}_{p,q}(c)$. Hence $Q_{p,q}(u_{t_{u}})=0,\ i.e.$,
$$\|\Delta u_{t_{u}}\|_{2}^{2}=\mu\gamma_{q}\|u_{t_{u}}\|_{q}^{q}+\gamma_{p}\|u_{t_{u}}\|_{p}^{p}.$$
Then by the inequality in \eqref{eq7701}, we have
\begin{align*}
t_{u}^{2}\|\Delta u\|_{2}^{2}&=\mu\gamma_{q}\|u_{t_{u}}\|_{q}^{q}+\gamma_{p}\|u_{t_{u}}\|_{p}^{p}\\
&=\frac{N}{4}(\mu\frac{q-2}{q}\|u_{t_{u}}\|_{q}^{q}+\frac{p-2}{p}\|u_{t_{u}}\|_{p}^{p})\\
&=\frac{N}{4}\int_{\mathbb{R}^{N}}(f(u_{t_{u}})u_{t_{u}}-2F(u_{t_{u}}))dx\\
&>\frac{N}{4}\cdot\frac{8}{N}\int_{\mathbb{R}^{N}}F(u_{t_{u}})dx=2\int_{\mathbb{R}^{N}}F(u_{t_{u}})dx\\
&=2(\mu\frac{1}{q}\|u_{t_{u}}\|_{q}^{q}+\frac{1}{p}\|u_{t_{u}}\|_{p}^{p})\\
&=2(t_{u}^{\frac{N(q-2)}{4}}\mu\frac{1}{q}\|u\|_{q}^{q}+t_{u}^{\frac{N(p-2)}{4}}\frac{1}{p}\|u\|_{p}^{p})\\
&>2t_{u}^{\frac{N(p-2)}{4}}\frac{1}{p}\|u\|_{p}^{p}.\\
\end{align*}
Hence, we deduce that
$$t_{u}^{\frac{N(p-2)}{4}-2}<\frac{p}{2}\frac{\|\Delta u\|_{2}^{2}}{\|u\|_{p}^{p}}.$$
\end{proof}

\begin{lemma}\label{ckj}
Let $N\geq5$, $\mu>0$ and $2+\frac{8}{N}\leq q<p\leq4^*$. If $q=2+\frac{8}{N}$, we further assume that $\mu c^{\frac{4}{N}}<\frac{N+4}{NC^q_{N,q}}$. It holds that
\begin{description}
\item[$(i)$]
$c\mapsto m_{p,q}(c)$ is a continuous mapping.

\item[$(ii)$]Let $c\in(0,+\infty)$. We have $m_{p,q}(c)\leq m_{p,q}(\alpha)+m_{p,q}(c-\alpha)$ for all $\alpha\in(0,c)$. The inequality is strict if $m_{p,q}(\alpha)$ or $m_{p,q}(c-\alpha)$ is reached.
\end{description}
\end{lemma}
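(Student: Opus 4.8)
The plan is to prove continuity in (i) by squeezing $m_{p,q}(c_n)$ between matching upper and lower estimates for an arbitrary sequence $c_n\to c$, and to prove (ii) directly from the monotonicity of Lemma~\ref{dea} and the positivity of Lemma~\ref{cpq}(iii). Throughout I use the characterization $m_{p,q}(c)=\inf_{u\in S(c)}\max_{t>0}E_{p,q}(u_t)$ established in Lemma~\ref{dea}. For the upper estimate, fix $\varepsilon>0$ and $u\in\mathcal{Q}_{p,q}(c)$ with $E_{p,q}(u)<m_{p,q}(c)+\varepsilon$. Setting $a_n=\sqrt{c_n/c}\to1$, the function $a_nu$ lies in $S(c_n)$ and satisfies $(a_nu)_t=a_nu_t$, so by the explicit formula \eqref{eq07} the one-variable maps $t\mapsto E_{p,q}(a_nu_t)$ converge locally uniformly to $t\mapsto E_{p,q}(u_t)$, with their maximizers remaining in a fixed compact subinterval of $(0,\infty)$. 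Hence $\max_{t>0}E_{p,q}(a_nu_t)\to\max_{t>0}E_{p,q}(u_t)=E_{p,q}(u)$, which gives $\limsup_n m_{p,q}(c_n)\le E_{p,q}(u)<m_{p,q}(c)+\varepsilon$; letting $\varepsilon\to0$ yields $\limsup_n m_{p,q}(c_n)\le m_{p,q}(c)$.

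For the lower estimate I take near-minimizers $u_n\in\mathcal{Q}_{p,q}(c_n)$ with $E_{p,q}(u_n)\le m_{p,q}(c_n)+\tfrac1n$. Since the masses $c_n$ lie near $c$, the energy identity \eqref{4-23-1} together with the smallness condition on $\mu c^{4/N}$ makes $E_{p,q}$ coercive uniformly in $n$, so $\{u_n\}$ is bounded in $H^2$. Rescaling by $b_n=\sqrt{c/c_n}\to1$ gives $\tilde u_n:=b_nu_n\in S(c)$, whence $m_{p,q}(c)\le\max_{t>0}E_{p,q}((\tilde u_n)_t)=E_{p,q}((\tilde u_n)_{\tau_n})$ with $\tau_n:=t_{\tilde u_n}$. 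Using $(\tilde u_n)_t=b_n(u_n)_t$, Lemma~\ref{lio} applied to the sequence $\{(u_n)_{\tau_n}\}$ gives $\big|E_{p,q}(b_n(u_n)_{\tau_n})-E_{p,q}((u_n)_{\tau_n})\big|\to0$, while $E_{p,q}((u_n)_{\tau_n})\le\max_{t>0}E_{p,q}((u_n)_t)=E_{p,q}(u_n)\le m_{p,q}(c_n)+\tfrac1n$. Combining, $m_{p,q}(c)\le\liminf_n m_{p,q}(c_n)$, which with the upper estimate proves $m_{p,q}(c_n)\to m_{p,q}(c)$.

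The main obstacle is to justify the last step, namely that the projection parameters $\tau_n$ remain in a compact subset of $(0,\infty)$, so that $\{(u_n)_{\tau_n}\}$ is $H^2$-bounded and Lemma~\ref{lio} applies. I would argue from $Q_{p,q}((\tilde u_n)_{\tau_n})=0$, which upon dividing by $\tau_n^2$ reads $\|\Delta\tilde u_n\|_2^2=\mu\gamma_q\tau_n^{\frac{N(q-2)}{4}-2}\|\tilde u_n\|_q^q+\gamma_p\tau_n^{\frac{N(p-2)}{4}-2}\|\tilde u_n\|_p^p$. The left-hand side is bounded, and bounded below by $\inf_{\mathcal{Q}_{p,q}}\|\Delta\cdot\|_2^2>0$ (Lemma~\ref{cpq}(ii)). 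If $q>2+\tfrac8N$, both exponents are positive, so $\tau_n\to\infty$ would force $\|\tilde u_n\|_q,\|\tilde u_n\|_p\to0$, contradicting the lower bound on $\|\Delta\tilde u_n\|_2^2$; if $q=2+\tfrac8N$, the identity \eqref{4-23-1} reduces to $E_{p,q}=(\tfrac{\gamma_p}{2}-\tfrac1p)\|\cdot\|_p^p$, so $\|\tilde u_n\|_p\to0$ (which $\tau_n\to\infty$ would again force) would drive the energy below $m_{p,q}(c)>0$, a contradiction. This pins $\tau_n$ in a compact range (Lemma~\ref{tpc} gives the same conclusion once a uniform lower bound on $\|u_n\|_p$ is in hand).

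For part (ii), since $\alpha<c$ and $c\mapsto m_{p,q}(c)$ is non-increasing we have $m_{p,q}(c)\le m_{p,q}(\alpha)$, and since $m_{p,q}(c-\alpha)>0$ (Lemma~\ref{cpq}(iii)) we obtain $m_{p,q}(c)\le m_{p,q}(\alpha)<m_{p,q}(\alpha)+m_{p,q}(c-\alpha)$; thus the inequality holds and is in fact always strict, in particular strict whenever $m_{p,q}(\alpha)$ or $m_{p,q}(c-\alpha)$ is reached. For the sharper strict monotonicity used later in the compactness argument I would also record that if $m_{p,q}(c')$ is attained by some $w\in\mathcal{Q}_{p,q}(c')$, then $m_{p,q}(c'')<m_{p,q}(c')$ for every $c''>c'$: rescaling $w$ to mass $c''$ via $v(x)=(c'/c'')^{\frac{N-4}{8}}w((c'/c'')^{\frac14}x)$ and using that the factor $(c'/c'')^{\frac{q(N-4)-2N}{8}}>1$ multiplying $\|w\|_q^q$ (its exponent is negative since $q<4^*$, and $c'/c''<1$), one gets $\max_{t>0}E_{p,q}(v_t)<\max_{t>0}E_{p,q}(w_t)=E_{p,q}(w)=m_{p,q}(c')$; the $q$-term alone forces strictness, so this argument also covers $p=4^*$. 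Taking $(c',c'')=(\alpha,c)$ (respectively $(c-\alpha,c)$) then yields the strict inequality whenever $m_{p,q}(\alpha)$ (respectively $m_{p,q}(c-\alpha)$) is reached.
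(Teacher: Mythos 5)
Your proposal is correct. For part (i) you follow essentially the paper's own mechanism: rescale the mass by a factor tending to $1$, project back onto the Pohozaev set via Lemma~\ref{uni}, control the projection parameter, and compare energies with Lemma~\ref{lio}. The paper organizes this as two one-sided claims for $c\pm h$ with $h\to0^+$ and bounds the projection parameter by Lemma~\ref{tpc}, whereas you work along arbitrary sequences $c_n\to c$ and bound $\tau_n$ by a contradiction argument from the identity $Q_{p,q}=0$ together with the uniform lower bound of Lemma~\ref{cpq}(ii); these are cosmetic differences. Two small points of care in your version: when $q=2+\frac{8}{N}$ you need the strict inequality $\mu c^{\frac{4}{N}}<\frac{N+4}{NC^q_{N,q}}$ to persist at the masses $c_n$ (and at some $c+\delta$) for large $n$, which is what makes the coercivity and the positivity of the levels uniform; and in your contradiction for $q=2+\frac{8}{N}$ the correct uniform lower bound is $E_{p,q}(u_n)\geq m_{p,q}(c_n)\geq m_{p,q}(c+\delta)>0$ rather than $m_{p,q}(c)$ itself, since some $c_n$ may exceed $c$ — monotonicity supplies this immediately.

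Part (ii) is where you genuinely depart from the paper, and your route is both shorter and stronger. The paper proves the sub-homogeneity $m_{p,q}(\theta\alpha)\leq\theta m_{p,q}(\alpha)$ for $\theta\in(1,\frac{c}{\alpha}]$ (strict when $m_{p,q}(\alpha)$ is reached) by an explicit rescaling $\widetilde{u}$ satisfying $Q_{p,q}(\widetilde{u})\leq0$ and a projection $t_0\in(0,1]$, and then splits $c=\alpha+(c-\alpha)$; strictness there requires attainment. You instead observe that in this mass-supercritical regime $m_{p,q}$ is non-increasing (Lemma~\ref{dea}) and strictly positive (Lemma~\ref{cpq}(iii)), so that $m_{p,q}(c)\leq m_{p,q}(\alpha)<m_{p,q}(\alpha)+m_{p,q}(c-\alpha)$: the inequality is unconditionally strict, which subsumes the paper's conditional statement. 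What the paper's longer argument buys is the sub-homogeneity estimate itself, a structural fact that remains meaningful in regimes where the level may vanish or be negative (where your shortcut is unavailable) and which can be reused independently; what yours buys is brevity and an unconditional conclusion, plus, through your closing remark — which reruns the strict inequality inside the proof of Lemma~\ref{dea} at an attained minimizer, using that the exponent $\frac{q(N-4)-2N}{8}$ is negative for $q<4^*$ — the global strict decrease of $m_{p,q}$ past any attained level, covering $p=4^*$ as well.
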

\begin{proof}
$(i)\ $	 Based on the above lemma, for any fixed $c>0$, $m_{p,q}(c-h)$ and $m_{p,q}(c+h)$ are monotonic and bounded as $h\rightarrow0^+$, so they have limits. Moreover, $m_{p,q}(c-h)\geq m_{p,q}(c)\geq m_{p,q}(c+h)$, thus we obtain
$$\lim\limits_{h\rightarrow0^+}m_{p,q}(c-h)\geq m_{p,q}(c)\geq\lim\limits_{h\rightarrow0^+}m_{p,q}(c+h).$$
It remains to show the other direction. We finish it by proving two claims.

\noindent Claim $1$: $\lim\limits_{h\rightarrow0^+}m_{p,q}(c-h)\leq m_{p,q}(c)$.

\noindent Take $u\in\mathcal{Q}_{p,q}(c)$, for any $h>0$, we set $u^{h}(x)=\sqrt{1-\frac{h}{c}}u(x)$. Noticing that $\|u^{h}\|_{2}^{2}=c-h$, by Lemma \ref{uni}, there exists $t_{h}>0$ such that $u^{h}_{t_{h}}\in\mathcal{Q}_{p,q}(c-h)$. By Lemma \ref{tpc}, we have
$$t_{h}^{\frac{N(p-2)}{4}-2}<\frac{p}{2}\frac{\|\Delta u^{h}\|_{2}^{2}}{\|u^{h}\|_{p}^{p}}=\frac{p}{2}\frac{(1-\frac{h}{c})}{(1-\frac{h}{c})^{\frac{p}{2}}}\frac{\|\Delta u\|_{2}^{2}}{\|u\|_{p}^{p}},$$
which implies that $t_{h}$ is bounded as $h\rightarrow0^+$. Hence $u_{t_{h}}$ is bounded in $H^{2}(\mathbb{R}^N)$. So according to Lemma \ref{lio}, we have
$$m_{p,q}(c-h)\leq E_{p,q}(u^{h}_{t_{h}})=E_{p,q}(\sqrt{1-\frac{h}{c}}u_{t_{h}})\leq E_{p,q}(u_{t_{h}})+o(h)\leq E_{p,q}(u)+o(h).$$
Then we prove the claim.

\noindent Claim $2$: $\lim\limits_{h\rightarrow0^+}m_{p,q}(c+h)\geq m_{p,q}(c)$.

\noindent Choose $u\in\mathcal{Q}_{p,q}(c+h)$ for $h>0$ and let $v^{h}(x)=\frac{1}{\sqrt{1+\frac{h}{c}}}u(x)$. Since $\|v^{h}\|_{2}^{2}=c$, similar to Claim $1$, there exists $t_{h}>0$ such that $v^{h}_{t_{h}}\in\mathcal{Q}_{p,q}(c)$ and
$$t_{h}^{\frac{N(p-2)}{4}-2}<\frac{p}{2}\frac{\|\Delta v^{h}\|_{2}^{2}}{\|v^{h}\|_{p}^{p}}=\frac{p}{2}\frac{(1+\frac{h}{c})^{\frac{p}{2}}}{(1+\frac{h}{c})}\frac{\|\Delta u\|_{2}^{2}}{\|u\|_{p}^{p}}.$$
So $u_{t_{h}}$ is bounded in $H^{2}(\mathbb{R}^N)$. Using Lemma \ref{lio},
it follows that
$$m_{p,q}(c)\leq E_{p,q}(v^{h}_{t_{h}})=E_{p,q}(\frac{1}{\sqrt{1+\frac{h}{c}}}u_{t_{h}})\leq E_{p,q}(u_{t_{h}})+o(h)\leq E_{p,q}(u)+o(h).$$
Hence, we prove that $c\mapsto m_{p,q}(c)$ is continuous.

$(ii)\ $ We observe that for fixed $\alpha\in(0,c)$, it is sufficient to prove that the following holds
\begin{equation}\label{eq17}
	\forall\theta\in(1,\frac{c}{\alpha}]:m_{p,q}(\theta\alpha)\leq\theta m_{p,q}(\alpha)
\end{equation}
and that, if $m_{p,q}(\alpha)$ is reached, the inequality is strict. In fact, if \eqref{eq17} holds then we have
\begin{align*}
m_{p,q}(c)&=\frac{c-\alpha}{c}m_{p,q}(c)+\frac{\alpha}{c}_{p,q}m_{p,q}(c)\\
&=\frac{c-\alpha}{c}m_{p,q}(\frac{c}{c-\alpha}(c-\alpha))+\frac{\alpha}{c}m_{p,q}(\frac{c}{\alpha}\alpha)\\	
&\leq m_{p,q}(c-\alpha)+m_{p,q}(\alpha),\\
\end{align*}
with a strict inequality if $m_{p,q}(\alpha)$ is reached. By the definition of $m_{p,q}(c)$, we notice that, for any $\epsilon>0$ sufficiently small, there exists a $u\in \mathcal{Q}_{p,q}(\alpha)$ such that
\begin{equation}\label{eq18}
E_{p,q}(u)\leq m_{p,q}(\alpha)+\epsilon.
\end{equation}
Since $Q_{p,q}(u)=0$, we define $\widetilde{u}(x)=(\frac{1}{\theta})^{\frac{1}{p-2}}u((\frac{1}{\theta})^{\frac{p}{N(p-2)}}x)$. Then
$$\int_{\mathbb{R}^{N}}|\widetilde{u}|^{2}dx=\theta\alpha,\quad\int_{\mathbb{R}^{N}}|\widetilde{u}|^{p}dx=\int_{\mathbb{R}^{N}}|u|^{p}dx,$$
$$\int_{\mathbb{R}^{N}}|\widetilde{u}|^{q}dx=\int_{\mathbb{R}^{N}}|\widetilde{u}|^{q}dx=(\frac{1}{\theta})^{\frac{q-p}{p-2}}\int_{\mathbb{R}^{N}}|u|^{q}dx\geq\int_{\mathbb{R}^{N}}|u|^{q}dx,$$
$$\int_{\mathbb{R}^{N}}|\Delta\widetilde{u}|^{2}dx=(\frac{1}{\theta})^{\frac{2N+p(4-N)}{N(p-2)}}\int_{\mathbb{R}^{N}}|\Delta u|^{2}dx\leq\int_{\mathbb{R}^{N}}|\Delta u|^{2}dx\textcolor{blue}{.}$$
Hence $Q_{p,q}(\widetilde{u})\leq0$. So there exists $t_{0}\in(0,1]$ such that $\widetilde{u}_{t_{0}}\in\mathcal{Q}_{p,q}(\theta\alpha)$ and for any $\theta>1$,
\begin{align*}
	m_{p,q}(\theta\alpha)&\leq E_{p,q}(\widetilde{u}_{t_{0}})=(\frac{1}{2}-\frac{1}{q\gamma_{q}})\int_{\mathbb{R}^{N}}|\Delta\widetilde{u}_{t_{0}}|^{2}dx+(\frac{\gamma_{p}}{q\gamma_{q}}-\frac{1}{p})\int_{\mathbb{R}^{N}}|\widetilde{u}_{t_{0}}|^{p}dx\\
	&=(\frac{1}{2}-\frac{1}{q\gamma_{q}})t_{0}^{2}\int_{\mathbb{R}^{N}}|\Delta\widetilde{u}|^{2}dx+(\frac{\gamma_{p}}{q\gamma_{q}}-\frac{1}{p})t_{0}^{\frac{Np}{4}-\frac{N}{2}}\int_{\mathbb{R}^{N}}|\widetilde{u}|^{p}dx\\
	&=(\frac{1}{2}-\frac{1}{q\gamma_{q}})t_{0}^{2}(\frac{1}{\theta})^{\frac{2N+p(4-N)}{N(p-2)}}\int_{\mathbb{R}^{N}}|\Delta u|^{2}dx\\
	&+(\frac{\gamma_{p}}{q\gamma_{q}}-\frac{1}{p})t_{0}^{\frac{Np}{4}-\frac{N}{2}}\int_{\mathbb{R}^{N}}|u|^{p}dx\\
	&\leq\theta(\frac{1}{2}-\frac{1}{q\gamma_{q}})\int_{\mathbb{R}^{N}}|\Delta u|^{2}dx+\theta(\frac{\gamma_{p}}{q\gamma_{q}}-\frac{1}{p})\int_{\mathbb{R}^{N}}|u|^{p}dx\\
	&\leq \theta E_{p,q}(u)<\theta(m_{p,q}(\alpha)+\epsilon).
\end{align*}
Since $\epsilon$ is arbitrary, we have that $m_{p,q}(\theta\alpha)\leq\theta m_{p,q}(\alpha)$. If $m_{p,q}(\alpha)$ is reached, we can choose $\epsilon=0$. Then the strict inequality follows.
\end{proof}
\begin{remark}
For Lemma \ref{ckj} $(ii)$, it is not used in the rest of the paper but is useful for future application. Although the key point for us to show the strong convergence is Lemma \ref{dea}, it is possible to prove the strong convergence by this property.
\end{remark}
\begin{lemma}\label{dea}
Let $2+\frac{8}{N}\leq q<p\leq4^*$, $\mu>0$ and $c>0$. If $q=2+\frac{8}{N}$, we further assume that $\mu c^{\frac{4}{N}}<\frac{N+4}{NC^q_{N,q}}$. If there exists $u\in S(c)$ such that $E_{p,q}(u)=m_{p,q}(c)$ and
\begin{equation}\label{eq9910}
	\Delta^{2}u+\omega u=\mu|u|^{q-2}u+u|^{p-2}u,
\end{equation}
then $Q_{p,q}(u)=0$ and $\omega>0$. Moreover, the function $c\mapsto m_{p,q}(c)$ is strictly decreasing in a right neighborhood of $c$.	
\end{lemma}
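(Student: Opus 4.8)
The statement splits into three assertions: $Q_{p,q}(u)=0$, $\omega>0$, and the local strict monotonicity of $m_{p,q}$. I would dispatch the first two by the Pohozaev/Nehari bookkeeping already implicit in the paper's framework, and reserve the real work for the third.

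First, since $u$ solves \eqref{eq9910} with multiplier $\omega$, it is a constrained critical point of $E_{p,q}$ on $S(c)$, i.e. $E'_{p,q}(u)=-\omega u$. Because the dilation $t\mapsto u_{t}$ preserves the mass, the whole curve lies in $S(c)$, and differentiating $E_{p,q}(u_{t})$ at $t=1$ (recall from Lemma \ref{uni} that $\tfrac{d}{dt}E_{p,q}(u_{t})=t^{-1}Q_{p,q}(u_{t})$) gives
\begin{equation*}
Q_{p,q}(u)=\langle E'_{p,q}(u),\partial_{t}u_{t}|_{t=1}\rangle=-\omega\langle u,\partial_{t}u_{t}|_{t=1}\rangle=-\tfrac{\omega}{2}\tfrac{d}{dt}\|u_{t}\|_{2}^{2}|_{t=1}=0 ,
\end{equation*}
so $u\in\mathcal{Q}_{p,q}(c)$. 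This is the rigorous version of the claim quoted in the introduction that every critical point of $E_{p,q}|_{S(c)}$ lies in the Pohozaev set. For the sign of $\omega$, I would combine $Q_{p,q}(u)=0$, namely $\|\Delta u\|_{2}^{2}=\mu\gamma_{q}\|u\|_{q}^{q}+\gamma_{p}\|u\|_{p}^{p}$, with the Nehari identity obtained by testing \eqref{eq9910} against $u$, namely $\|\Delta u\|_{2}^{2}+\omega\|u\|_{2}^{2}=\mu\|u\|_{q}^{q}+\|u\|_{p}^{p}$. Subtracting yields $\omega\,c=\mu(1-\gamma_{q})\|u\|_{q}^{q}+(1-\gamma_{p})\|u\|_{p}^{p}$. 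Since $q<p\le4^{*}$ we have $\gamma_{q}<1$ and $\gamma_{p}\le1$ (with $\gamma_{4^{*}}=1$), so the right-hand side is bounded below by $\mu(1-\gamma_{q})\|u\|_{q}^{q}>0$ because $u\neq0$ and $\mu>0$; hence $\omega>0$.

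The core of the lemma is the strict monotonicity, where I would use the genuine minimizer $u$ as a test object at nearby masses. For $c'>c$ set $\lambda=c/c'\in(0,1)$ and take the mass-changing, $\dot H^{2}$-preserving dilation $v(x)=\lambda^{(N-4)/8}u(\lambda^{1/4}x)$ already employed in the proof that $m_{p,q}$ is non-increasing, so that $\|v\|_{2}^{2}=c'$, $\|\Delta v\|_{2}^{2}=\|\Delta u\|_{2}^{2}$, and $\|v\|_{q}^{q}=\lambda^{(q(N-4)-2N)/8}\|u\|_{q}^{q}$, with the analogous formula for the $p$-norm. Because $q<4^{*}$ the exponent $q(N-4)-2N$ is strictly negative, so $\lambda<1$ forces $\|v\|_{q}^{q}>\|u\|_{q}^{q}$ strictly, while $\|v\|_{p}^{p}\ge\|u\|_{p}^{p}$ (equality only when $p=4^{*}$). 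Comparing the fiber maps term by term, the strictly larger $q$-mass against the negative coefficient $-\tfrac{\mu}{q}t^{N(q-2)/4}$ yields $E_{p,q}(v_{t})<E_{p,q}(u_{t})$ for every $t>0$. Evaluating at the maximizer $t_{v}$ of $t\mapsto E_{p,q}(v_{t})$ (which exists by Lemma \ref{uni}, placing $v_{t_{v}}\in\mathcal{Q}_{p,q}(c')$) and using $u\in\mathcal{Q}_{p,q}(c)$ so that $\max_{t>0}E_{p,q}(u_{t})=E_{p,q}(u)$, I obtain
\begin{equation*}
m_{p,q}(c')\le E_{p,q}(v_{t_{v}})<E_{p,q}(u_{t_{v}})\le\max_{t>0}E_{p,q}(u_{t})=E_{p,q}(u)=m_{p,q}(c).
\end{equation*}

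The one point requiring care---and the reason the conclusion is stated only for a right neighborhood---is the mass-critical case $q=2+\tfrac{8}{N}$: invoking Lemma \ref{uni} at mass $c'$ requires $\mu(c')^{4/N}<\tfrac{N+4}{NC^{q}_{N,q}}$, and since $c$ satisfies this with strict inequality, the condition persists for all $c'$ in a small right neighborhood of $c$ by continuity. On that neighborhood the displayed chain gives $m_{p,q}(c')<m_{p,q}(c)$, which is the asserted strict decrease; when $q>2+\tfrac{8}{N}$ no such restriction is needed and the inequality holds for every $c'>c$. I expect this mass-critical threshold, rather than any analytic difficulty, to be the only genuine obstacle.
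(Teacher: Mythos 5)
Your proposal is correct. The first two assertions are handled essentially as in the paper: $Q_{p,q}(u)=0$ via the Pohozaev identity and $\omega>0$ by subtracting it from the Nehari identity, giving $\omega c=\mu(1-\gamma_{q})\|u\|_{q}^{q}+(1-\gamma_{p})\|u\|_{p}^{p}>0$. One caveat: your chain-rule derivation of $Q_{p,q}(u)=0$ through the pairing $\langle E'_{p,q}(u),\partial_{t}u_{t}|_{t=1}\rangle$ is only formal, since $\partial_{t}u_{t}|_{t=1}$ involves $x\cdot\nabla u$, which need not belong to $L^{2}$ for a general $H^{2}$ function; the rigorous statement is precisely the Pohozaev identity for solutions of the elliptic equation, which is what the paper invokes directly.

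For the core assertion, the strict monotonicity, you take a genuinely different and more elementary route than the paper. The paper introduces the two-parameter family $u_{\lambda,t}(x)=t^{N/4}\sqrt{\lambda}\,u(\sqrt{t}x)$, computes $\partial_{\lambda}\mathcal{K}(1,1)=-\tfrac{1}{2}\omega c<0$, $\partial_{t}\mathcal{K}(1,1)=0$ and $\partial_{t}^{2}\mathcal{K}(1,1)<0$, and then uses the implicit function theorem on the constraint $Q_{p,q}(u_{\lambda,t})=0$ to produce a competitor in $\mathcal{Q}_{p,q}((1+\epsilon)c)$ with strictly smaller energy; there the positivity of the Lagrange multiplier $\omega$ is the engine of the strict decrease. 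You instead recycle the $\dot H^{2}$-preserving, mass-increasing rescaling from the proof that $m_{p,q}$ is non-increasing, and observe that when the starting point is an actual minimizer $u$ (so that $Q_{p,q}(u)=0$ forces $t_{u}=1$ and $\max_{t>0}E_{p,q}(u_{t})=E_{p,q}(u)$), the inequality in that proof becomes strict, because the $L^{q}$-term with $q<4^{*}$ is strictly amplified under the rescaling; the chain $m_{p,q}(c')\leq E_{p,q}(v_{t_{v}})<E_{p,q}(u_{t_{v}})\leq E_{p,q}(u)=m_{p,q}(c)$ then closes the argument with no implicit function theorem and, notably, without using $\omega>0$ at all. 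Your route buys simplicity and a slightly stronger conclusion when $q>2+\tfrac{8}{N}$ (namely $m_{p,q}(c')<m_{p,q}(c)$ for \emph{every} $c'>c$, not merely locally), while the paper's route makes the role of the multiplier transparent. You also correctly identify the only real constraint, namely that in the mass-critical case $q=2+\tfrac{8}{N}$ the smallness condition $\mu(c')^{4/N}<\tfrac{N+4}{NC^{q}_{N,q}}$ must persist at the larger mass, which is exactly what forces the right-neighborhood formulation; either version of the lemma serves equally well in the compactness argument of Lemma \ref{wcs}.
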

\begin{proof}
	The fact that any solution to \eqref{eq9910} satisfies $Q_{p,q}(u)=0$ is a direct consequence of the Pohozaev identity. Now we deduce from \eqref{eq9910}
\begin{equation}\label{eq9911}
\|\Delta u\|_{2}^{2}+\omega\|u\|	_{2}^{2}-\mu\|u\|_{q}^{q}-\|u\|_{p}^{p}=0.
\end{equation}
Combining \eqref{eq9911} with $Q_{p,q}(u)=0$, we have
$$\omega\|u\|_{2}^{2}=\mu(1-\gamma_{q})\|u\|_{q}^{q}+(1-\gamma_{p})\|u\|_{p}^{p}.$$
Since $0<\gamma_{q}<\gamma_{p}\leq1$, it follows that $\omega>0$. Then we let $u_{\lambda,t}(x)=t^{\frac{N}{4}}\sqrt{\lambda}u(\sqrt{t}x)$ for $\lambda,\ t>0$. We define
$$\mathcal{K}(\lambda,t)=E_{p,q}(u_{\lambda,t})=\frac{1}{2}t^{2}\lambda\|\Delta u\|_{2}^{2}-\frac{\mu}{q}t^{\frac{N(q-2)}{4}}\lambda^{\frac{q}{2}}\|u\|_{q}^{q}-\frac{1}{p}t^{\frac{N(p-2)}{4}}\lambda^{\frac{p}{2}}\|u\|_{p}^{p}$$
and
$$\mathcal{M}(\lambda,t)=Q_{p,q}(u_{\lambda,t})=t^{2}\lambda\|\Delta u\|_{2}^{2}-\mu\gamma_{q}t^{\frac{N(q-2)}{4}}\lambda^{\frac{q}{2}}\|u\|_{q}^{q}-\gamma_{p}t^{\frac{N(p-2)}{4}}\lambda^{\frac{p}{2}}\|u\|_{p}^{p}.$$
By suitable calculations, we have
$$\frac{\partial\mathcal{K}}{\partial\lambda}(1,1)=\frac{1}{2}\|\Delta u\|_{2}^{2}-\frac{\mu}{2}\|u\|_{q}^{q}-\frac{1}{2}\|u\|_{p}^{p}=-\frac{1}{2}\omega c,$$
$$\frac{\partial\mathcal{K}}{\partial t}(1,1)=\|\Delta u\|_{2}^{2}-\mu\gamma_{q}\|u\|_{q}^{q}-\gamma_{p}\|u\|_{p}^{p}=0,$$
and
$$\frac{\partial^{2}\mathcal{K}}{\partial t^{2}}(1,1)=\|\Delta u\|_{2}^{2}-\mu\gamma_{q}(\frac{N(q-2)}{4}-1)\|u\|_{q}^{q}-\gamma_{p}(\frac{N(p-2)}{4}-1)\|u\|_{p}^{p}<0,$$
which yields for $\delta_{t}$ small enough and $\delta_{\lambda}>0$,
\begin{equation}\label{eq9912}
\mathcal{K}(1+\delta_{\lambda},1+\delta_{t})<\mathcal{K}(1,1)\quad\mathrm{for}\ \omega>0.	
\end{equation}
In addition, we can also observe that
$$\mathcal{M}(1,1)=Q_{p,q}(u)=\|\Delta u\|_{2}^{2}-\mu\gamma_{q}\|u\|_{q}^{q}-\gamma_{p}\|u\|_{p}^{p}=0$$
and
$$\frac{\partial\mathcal{M}}{\partial t}(1,1)=2\|\Delta u\|_{2}^{2}-\mu\gamma_{q}\frac{N(q-2)}{4}\|u\|_{q}^{q}-\gamma_{p}\frac{N(p-2)}{4}\|u\|_{p}^{p}\not=0.$$
From the Implicit Function Theorem, we deduce that there exists $\epsilon>0$ and a continuous function $g:[1-\epsilon,\ 1+\epsilon]\mapsto\mathbb{R}$ satisfying $g(1)=1$ such that $\mathcal{M}(\lambda,g(\lambda))=0$ for $\lambda\in[1-\epsilon,\ 1+\epsilon]$. Therefore, we inter from \eqref{eq9912} that
$$m_{p,q}((1+\epsilon)c)=\inf\limits_{u\in\mathcal{Q}_{p,q}((1+\epsilon)c)}E_{p,q}(u)\leq E_{p,q}(u_{1+\epsilon,g(1+\epsilon)})<E(u)=m_{p,q}(c).$$
This proves the last assertion of the lemma.
\end{proof}

\section{Ground States In Sobolev Subcritical Case}\label{ground states1}
In this section we will establish the existence of normalized ground states to \eqref{eq02} in Sobolev subcritical case. Our prime aim is to prove the existence of a Palais-Smale sequence $\{u_{n}\}\subset\mathcal{Q}_{p,q}(c)$ at level $m_{p,q}(c)$ for $E_{p,q}$ restricted on $S(c)$.

First, let us introduce the functional $\Psi(u):H^{2}(\mathbb{R}^{N})\backslash\{0\}\rightarrow\mathbb{R}$ defined by
$$\Psi(u):=E_{p,q}(u_{t_{u}})=\frac{1}{2}t_{u}^{2}\|\Delta u\|_{2}^{2}-\frac{\mu}{q}t_{u}^{\frac{N(q-2)}{4}}\|u\|_{q}^{q}-\frac{1}{p}t_{u}^{\frac{N(p-2)}{4}}\|u\|_{p}^{p},$$
where $t_{u}$ is the unique number determined in Lemma \ref{uni}. Then we get
\begin{lemma}\label{cc1}
The functional $\Psi(u):H^{2}(\mathbb{R}^{N})\backslash\{0\}\rightarrow\mathbb{R}$ is of class $C^1$ and
\begin{align*}
	d\Psi(u)[\varphi]&=t_{u}^{2}\int_{\mathbb{R}^{N}}\Delta u\cdot\Delta\varphi dx-t_{u}^{\frac{N(q-2)}{4}}\mu\int_{\mathbb{R}^{N}}|u|^{q-2}u\cdot\varphi dx-t_{u}^{\frac{N(p-2)}{4}}\int_{\mathbb{R}^{N}}|u|^{p-2}u\cdot\varphi dx\\
	&=dE_{p,q}(u_{s_{u}})[\varphi_{s_{u}}]\\
\end{align*}
for any $u\in H^{2}(\mathbb{R}^{N})\backslash\{0\}$	 and $\varphi\in H^{2}(\mathbb{R}^{N})$.
\end{lemma}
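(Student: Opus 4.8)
The plan is to view $\Psi$ as a composition of $C^1$ maps and then to exploit the defining property of $t_u$ — namely that it is a critical point of the fiber map $t\mapsto E_{p,q}(u_t)$ — so that the awkward term arising from differentiating $t_u$ itself drops out. This is exactly the mechanism that makes monotonicity-trick/Pohozaev-constraint arguments produce a genuinely $C^1$ reduced functional.

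First I would settle the regularity. The scaling map $(t,u)\mapsto u_t$ from $(0,+\infty)\times(H^2\setminus\{0\})$ into $H^2$ is smooth, and $E_{p,q}\in C^1(H^2,\mathbb{R})$ by the continuous embeddings $H^2\hookrightarrow L^r$ for $2<r\le 4^*$ (so the Nemytskii terms $\|u\|_q^q,\|u\|_p^p$ are $C^1$). Hence the fiber map $\phi(t,u):=E_{p,q}(u_t)$ is jointly $C^1$, with the explicit expression \eqref{eq07}. Combining this with Lemma~\ref{rem}, which gives that $u\mapsto t_u$ is $C^1$ on $H^2\setminus\{0\}$, I conclude that $\Psi(u)=\phi(t_u,u)$ is $C^1$ as a composition.

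Next I would apply the chain rule. For $u\in H^2\setminus\{0\}$ and $\varphi\in H^2$,
$$d\Psi(u)[\varphi]=\partial_t\phi(t_u,u)\,\big(dt_u[\varphi]\big)+\partial_u\phi(t_u,u)[\varphi].$$
The crucial observation is that the first summand vanishes: since $u_{t_u}\in\mathcal{Q}_{p,q}(c)$ we have $Q_{p,q}(u_{t_u})=0$, and by the identity $\frac{d}{dt}E_{p,q}(u_t)=t^{-1}Q_{p,q}(u_t)$ from Lemma~\ref{uni} this gives $\partial_t\phi(t_u,u)=t_u^{-1}Q_{p,q}(u_{t_u})=0$. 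Thus only the partial derivative in the $u$-slot at the frozen value $t=t_u$ survives, and differentiating \eqref{eq07} with respect to $u$ at fixed $t=t_u$ yields precisely
$$d\Psi(u)[\varphi]=t_u^2\!\int_{\mathbb{R}^N}\!\Delta u\cdot\Delta\varphi\,dx-\mu t_u^{\frac{N(q-2)}{4}}\!\int_{\mathbb{R}^N}\!|u|^{q-2}u\,\varphi\,dx-t_u^{\frac{N(p-2)}{4}}\!\int_{\mathbb{R}^N}\!|u|^{p-2}u\,\varphi\,dx,$$
which is the asserted formula. To recover the intrinsic reading $d\Psi(u)[\varphi]=dE_{p,q}(u_{t_u})[\varphi_{t_u}]$, I would write out $dE_{p,q}(u_{t_u})[\varphi_{t_u}]$ and perform the change of variables $y=\sqrt{t_u}\,x$ in each term, using $\Delta u_{t_u}(x)=t_u^{\frac N4+1}(\Delta u)(\sqrt{t_u}\,x)$ for the biharmonic part and the homogeneity factor $t_u^{\frac{N(r-2)}{4}}$ for each $L^r$ term; this reproduces exactly the three integrals above.

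The only genuinely delicate point is the rigorous justification of the chain rule, which rests entirely on the $C^1$-dependence $u\mapsto t_u$ supplied by Lemma~\ref{rem} (itself an Implicit Function Theorem argument using $\partial_t\Psi(t_u,u)=\psi_u''(t_u)<0$). Once that is in hand, the vanishing of $\partial_t\phi(t_u,u)$ reduces the whole computation to an elementary differentiation of the explicit polynomial-in-$t$ expression \eqref{eq07} together with the scaling change of variables, so no substantive obstacle remains.
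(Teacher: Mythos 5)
Your proof is correct in substance but follows a genuinely different route from the paper. The paper never invokes the chain rule: it works directly with the difference quotient $\Psi(u+s\varphi)-\Psi(u)=E_{p,q}((u+s\varphi)_{t_s})-E_{p,q}(u_{t_0})$ and squeezes it between $E_{p,q}((u+s\varphi)_{t_s})-E_{p,q}(u_{t_s})$ from above (maximality of $t_0$ for the fiber map of $u$) and $E_{p,q}((u+s\varphi)_{t_0})-E_{p,q}(u_{t_0})$ from below (maximality of $t_s$ for the fiber map of $u+s\varphi$); a mean value theorem expansion of both bounds, plus the convergence $t_s\to t_0$ as $s\to 0$, pins down the G\^{a}teaux derivative, after which boundedness in $\varphi$ and continuity in $u$ upgrade it to Fr\'echet $C^1$. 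That Szulkin--Weth-type squeeze only needs continuity of $u\mapsto t_u$, whereas your chain-rule argument genuinely requires the $C^1$-dependence supplied by Lemma \ref{rem}; since the paper does establish Lemma \ref{rem} via the implicit function theorem, your route is legitimate here and is arguably cleaner: the one-line cancellation $\partial_t\phi(t_u,u)=t_u^{-1}Q_{p,q}(u_{t_u})=0$ eliminates the unwanted $dt_u[\varphi]$ term explicitly, which is exactly what the paper's squeeze accomplishes implicitly. The concluding change of variables identifying $d\Psi(u)[\varphi]$ with $dE_{p,q}(u_{t_u})[\varphi_{t_u}]$ is the same in both arguments.

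One point in your write-up needs repair: the claim that the scaling map $(t,u)\mapsto u_t$ is smooth from $(0,+\infty)\times(H^{2}\setminus\{0\})$ into $H^{2}$ is false. Differentiating $u(\sqrt{t}\,x)$ in $t$ produces terms of the type $x\cdot\nabla u(\sqrt{t}\,x)$, which need not belong to $L^{2}(\mathbb{R}^N)$ for a general $u\in H^{2}$, so $t\mapsto u_t$ is typically not even G\^{a}teaux differentiable as an $H^{2}$-valued map. This does not damage your proof, because what you actually use is the joint $C^1$-ness of the scalar function $\phi(t,u)=E_{p,q}(u_t)$, and that follows directly from the explicit formula \eqref{eq07}: $\phi$ is a finite sum of smooth functions of $t$ multiplied by the $C^1$ functionals $\|\Delta u\|_2^{2}$, $\|u\|_q^{q}$, $\|u\|_p^{p}$. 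You should simply derive the regularity of $\phi$ from \eqref{eq07}, which you already cite, rather than from a (nonexistent) smoothness of the scaling map itself.
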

\begin{proof}
Let  $u\in H^{2}(\mathbb{R}^{N})\backslash\{0\}$	 and $\varphi\in H^{2}(\mathbb{R}^{N})$. By the definition of $\Psi$, we have
$$\Psi(u+s\varphi)-\Psi(u)=E_{p,q}((u+s\varphi)_{t_{s}})-E_{p,q}(u_{t_{0}}),$$
where $|s|$ is small enough, $t_{s}=t_{u+s\varphi}$ and $t_{0}=t_{u}$ is the unique maximum point of the functional $E_{p,q}(u)$. By the mean value theorem we get
\begin{align*}
E_{p,q}((u+s\varphi)_{t_{s}})-E_{p,q}(u_{t_{0}})	&\leq E_{p,q}((u+s\varphi)_{t_{s}})-E_{p,q}(u_{t_{s}})\\
&=\frac{t_{s}^{2}}{2}(\|\Delta(u+s\varphi)\|_{2}^{2}-\|\Delta u\|_{2}^{2})
-\frac{t_{s}^{\frac{N(p-2)}{4}}}{p}(\|u+s\varphi\|_{p}^{p}-\|u\|_{p}^{p})\\
&-\mu\frac{t_{s}^{\frac{N(q-2)}{4}}}{q}(\|u+s\varphi\|_{q}^{q}-\|u\|_{q}^{q})\\
&=\frac{t_{s}^{2}}{2}(\int_{\mathbb{R}^{N}}2s\Delta u\cdot\Delta\varphi+s^{2}|\Delta\varphi|^{2}dx)\\
&-\mu t_{s}^{\frac{N(q-2)}{4}}\int_{\mathbb{R}^{N}}(\int_{0}^{1}|u+s\eta_{s}\varphi|^{q-2}(u+s\eta_{s}\varphi)s\varphi d\eta_{s})dx\\	
&-t_{s}^{\frac{N(p-2)}{4}}\int_{\mathbb{R}^{N}}(\int_{0}^{1}|u+s\eta_{s}\varphi|^{p-2}(u+s\eta_{s}\varphi)s\varphi d\eta_{s})dx,\\
\end{align*}
where $\eta_{s}\in(0,1)$. Similarly,
\begin{align*}
E_{p,q}((u+s\varphi)_{t_{s}})-E_{p,q}(u_{t_{0}})	&\geq E_{p,q}((u+s\varphi)_{t_{0}})-E_{p,q}(u_{t_{0}})\\
&=\frac{t_{0}^{2}}{2}(\int_{\mathbb{R}^{N}}2s\Delta u\cdot\Delta\varphi+s^{2}|\Delta\varphi|^{2}dx)\\
&-\mu t_{0}^{\frac{N(q-2)}{4}}\int_{\mathbb{R}^{N}}(\int_{0}^{1}|u+s\tau_{s}\varphi|^{q-2}(u+s\tau_{s}\varphi)s\varphi d\tau_{s})dx\\	
&-t_{0}^{\frac{N(p-2)}{4}}\int_{\mathbb{R}^{N}}(\int_{0}^{1}|u+s\tau_{s}\varphi|^{p-2}(u+s\tau_{s}\varphi)s\varphi d\tau_{s})dx,\\
\end{align*}
where $\tau_{s}\in(0,1)$. Since the map $u\mapsto t_{u}$ is of class $C^1$, it follows from the two inequalities above that
$$\lim\limits_{s\rightarrow0}\frac{\Psi(u+s\varphi)-\Psi(u)}{s}=t_{u}^{2}\int_{\mathbb{R}^{N}}\Delta u\cdot\Delta\varphi dx-t_{u}^{\frac{N(q-2)}{4}}\mu\int_{\mathbb{R}^{N}}|u|^{q-2}u\cdot\varphi dx-t_{u}^{\frac{N(p-2)}{4}}\mu\int_{\mathbb{R}^{N}}|u|^{p-2}u\cdot\varphi dx.$$
Hence, we see that the G$\hat{a}$teaux derivative of $\Psi$ is bounded linear in $\varphi$ and continuous in $u$. Therefore, $\Psi$ is of class $C^1$. In particular, by changing variables in the integrals, we have
\begin{align*}
d\Psi(u)[\varphi]&=t_{u}^{2}\int_{\mathbb{R}^{N}}\Delta u\cdot\Delta\varphi dx-t_{u}^{\frac{N(q-2)}{4}}\mu\int_{\mathbb{R}^{N}}|u|^{q-2}u\cdot\varphi dx-t_{u}^{\frac{N(p-2)}{4}}\int_{\mathbb{R}^{N}}|u|^{p-2}u\cdot\varphi dx\\
&=\int_{\mathbb{R}^{N}}\Delta u_{t_{u}}\cdot\Delta\varphi_{t_{u}}dx-\mu\int_{\mathbb{R}^{N}}|u_{t_{u}}|^{q-2}u_{t_{u}}\cdot\varphi_{t_{u}}dx-\int_{\mathbb{R}^{N}}|u_{t_{u}}|^{p-2}u_{t_{u}}\cdot\varphi_{t_{u}}dx.\\
\end{align*}
This completes the proof.
\end{proof}

For any given $c>0$ and $2+\frac{8}{N}\leq q<p<4^*$, If $q=2+\frac{8}{N}$, we further assume that $\mu c^{\frac{4}{N}}<\frac{N+4}{NC^q_{N,q}}$. We consider the constrained functional
$I:=\Psi|_{S(c)}:S(c)\rightarrow\mathbb{R}$.
\begin{lemma}\label{cc2}
The functional $I$ is of class $C^1$ and
$$dI(u)[\varphi]=d\Psi(u)[\varphi]=dE_{p,q}(u_{t_{u}})[\varphi_{t_{u}}]$$
for any $u\in S(c)$ and $\varphi\in T_{u}S(c)$.
\end{lemma}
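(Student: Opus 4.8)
The plan is to deduce this lemma from the $C^1$ regularity of $\Psi$ on $H^2(\mathbb{R}^N)\setminus\{0\}$ already established in Lemma \ref{cc1}, together with the fact that $S(c)$ is a smooth codimension-one submanifold of $H^2(\mathbb{R}^N)$ contained in the open set $H^2(\mathbb{R}^N)\setminus\{0\}$ on which $\Psi$ is defined. First I would record that $S(c)$ is the level set $G^{-1}(0)$ of the $C^\infty$ functional $G(u)=\|u\|_2^2-c$, whose differential $dG(u)[\varphi]=2\int_{\mathbb{R}^N}u\varphi\,dx$ is nonzero at every $u\in S(c)$ since $\|u\|_2^2=c>0$. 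Hence $S(c)$ is a $C^1$ Hilbert submanifold with tangent space $T_uS(c)=\ker dG(u)=\{\varphi\in H^2:\int_{\mathbb{R}^N}u\varphi\,dx=0\}$, and every $u\in S(c)$ satisfies $u\neq 0$, so $\Psi$ is $C^1$ in a neighborhood of $S(c)$.

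Next I would argue that the restriction $I=\Psi|_{S(c)}$ of a $C^1$ functional to a $C^1$ submanifold is $C^1$, with differential obtained by restricting $d\Psi(u)$ to $T_uS(c)$. Concretely, for fixed $u_0\in S(c)$ one can introduce an explicit local chart, for instance the normalization map $\Phi(w)=\sqrt{c}\,(u_0+w)/\|u_0+w\|_2$ defined for $w\in T_{u_0}S(c)$ with $\|w\|_{H^2}$ small; since $w\mapsto\|u_0+w\|_2$ is smooth and bounded away from $0$ near $w=0$, the map $\Phi$ is $C^1$ into $S(c)\subset H^2\setminus\{0\}$ with $\Phi(0)=u_0$ and $d\Phi(0)=\mathrm{id}$ on $T_{u_0}S(c)$. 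The composition $I\circ\Phi=\Psi\circ\Phi$ is then $C^1$ as a composition of $C^1$ maps, which yields the $C^1$ regularity of $I$; differentiating by the chain rule at $w=0$ gives $dI(u_0)[\varphi]=d\Psi(u_0)[d\Phi(0)\varphi]=d\Psi(u_0)[\varphi]$ for every $\varphi\in T_{u_0}S(c)$. Equivalently, and even more directly, for any $C^1$ path $s\mapsto\gamma(s)\in S(c)$ with $\gamma(0)=u$ and $\dot\gamma(0)=\varphi$, applying the chain rule to the $C^1$ map $\Psi$ yields $\frac{d}{ds}\big|_{s=0}I(\gamma(s))=d\Psi(u)[\varphi]$.

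Finally, the second asserted identity $dI(u)[\varphi]=dE_{p,q}(u_{t_u})[\varphi_{t_u}]$ is nothing but the formula already proved in Lemma \ref{cc1}, where $u_{t_u}$ and $\varphi_{t_u}$ are the dilations defined in \eqref{eq06}; combining it with $dI(u)[\varphi]=d\Psi(u)[\varphi]$ closes the chain of equalities. The only point requiring genuine care is justifying that the differential of the constrained functional is exactly the restriction of the ambient differential to the tangent space; this is standard Hilbert-manifold calculus, and the explicit chart above makes the bookkeeping transparent. No new analytic estimates are needed beyond Lemma \ref{cc1} and the elementary smoothness of $u\mapsto\|u\|_2^2$.
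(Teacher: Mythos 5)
Your proposal is correct and follows essentially the same route as the paper: the paper states Lemma \ref{cc2} with no separate proof, treating it as an immediate consequence of the $C^1$ regularity of $\Psi$ on $H^{2}(\mathbb{R}^{N})\backslash\{0\}$ established in Lemma \ref{cc1} together with standard Hilbert-manifold calculus on the constraint $S(c)$. Your explicit normalization chart and chain-rule computation simply make precise the routine details that the paper leaves implicit.
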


Then we turn to search the existence of a Palais-Smale sequence $\{u_{n}\}\subset\mathcal{Q}_{p,q}(c)$ at level $m_{p,q}(c)$ for $E_{p,q}$ restricted on $S(c)$. The following arguments are inspired from \cite{bar,bar2,bar3}. We recall the below definition from \cite{GHON} first.
\begin{definition}
Let $B$ be a closed subset of a metric space $X$. We say that a class $\mathcal{G}$ of compact subsets of $Y$ is a homotopy stable family with closed boundary $B$ provided
\begin{description}
	\item $(i)$ every set in $\mathcal{G}$ contains $B$.
	\item $(ii)$ for any $A\in\mathcal{G}$ and any $\eta\in C([0,1]\times Y,Y)$ satisfying $\eta(t,x)=x$ for all $(t,x)\in(\{0\}\times Y)\bigcup([0,1],B)$, we have $\eta(\{1\}\times A)\in\mathcal{G}$.
\end{description}
We remark that $B=\emptyset$ is admissible.
\end{definition}
\begin{lemma}\label{ps1}
 Let $2+\frac{8}{N}\leq q<p<4^*$. If $q=2+\frac{8}{N}$, we further assume that $\mu c^{\frac{4}{N}}<\frac{N+4}{NC^q_{N,q}}$. Let $\mathcal{G}$ be a homotopy stable family of compact subsets of $S(c)$ (with $B=\emptyset$) and set
 $$c_{\mathcal{G}}:=\inf\limits_{A\in\mathcal{G}}\max\limits_{u\in A}I(u).$$	
 If $c_{\mathcal{G}}>0$, then there exists a Palais-Smale sequence $\{u_{n}\}\subset\mathcal{Q}_{p,q}(c)$ for the constrained functional $E_{p,q}|_{S(c)}$ at level $c_{\mathcal{G}}$.
\end{lemma}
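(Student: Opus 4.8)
The plan is to produce the sequence by applying Ghoussoub's minimax principle to the auxiliary functional $I=\Psi|_{S(c)}$ and then pushing the resulting sequence onto the Pohozaev set via the fiber scaling $u\mapsto u_{t_u}$. Recall that $S(c)$ is a $C^1$ submanifold of $H^2$ and that, by Lemma \ref{cc2}, $I\in C^1(S(c))$ with $dI(u)[\varphi]=dE_{p,q}(u_{t_u})[\varphi_{t_u}]$. Since $\mathcal{G}$ is a homotopy stable family of compact subsets of $S(c)$ with closed boundary $B=\emptyset$ and $c_{\mathcal{G}}>0$, the principle of \cite{GHON} applies to $I$ and yields a sequence $\{v_n\}\subset S(c)$ with $I(v_n)\to c_{\mathcal{G}}$ and $\|dI(v_n)\|_{(T_{v_n}S(c))^*}\to 0$.

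Next I would set $u_n:=(v_n)_{t_{v_n}}$, where $t_{v_n}$ is the unique scaling parameter from Lemma \ref{uni}. By construction $u_n\in\mathcal{Q}_{p,q}(c)$, so $Q_{p,q}(u_n)=0$, and $E_{p,q}(u_n)=\Psi(v_n)=I(v_n)\to c_{\mathcal{G}}$, so the energy level is already correct. Because $E_{p,q}(u_n)$ is bounded and $E_{p,q}$ is coercive on $\mathcal{Q}_{p,q}(c)$ by Lemma \ref{cpq}$(iv)$, the sequence $\{u_n\}$ is bounded in $H^2$; Lemma \ref{cpq}$(ii)$ moreover gives $\|\Delta u_n\|_2^2\geq\delta>0$.

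The decisive point is to convert the smallness of $dI(v_n)$ into the smallness of $(E_{p,q}|_{S(c)})'(u_n)$. The mechanism is the derivative identity of Lemma \ref{cc1}, namely $dI(v_n)[\varphi]=dE_{p,q}(u_n)[\varphi_{t_{v_n}}]$ for $\varphi\in T_{v_n}S(c)$, together with the fact that the scaling $\varphi\mapsto\varphi_{t}$ preserves the $L^2$ norm and hence maps $T_{v_n}S(c)$ isometrically onto $T_{u_n}S(c)$. Writing an arbitrary $\psi\in T_{u_n}S(c)$ as $\psi=\varphi_{t_{v_n}}$ with $\varphi=\psi_{1/t_{v_n}}$, and using $\|\Delta\varphi\|_2=t_{v_n}^{-1}\|\Delta\psi\|_2$ together with $\|\varphi\|_2=\|\psi\|_2$, one obtains $\|\varphi\|_{H^2}\leq\max\{1,t_{v_n}^{-1}\}\,\|\psi\|_{H^2}$, whence $\|(E_{p,q}|_{S(c)})'(u_n)\|\leq\max\{1,t_{v_n}^{-1}\}\,\|dI(v_n)\|\to 0$, provided the scaling factors $t_{v_n}$ stay bounded away from $0$.

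The hard part is exactly this proviso. Controlling $\{t_{v_n}\}$ from below is equivalent to the $H^2$-boundedness of $\{v_n\}$ (via $\|\Delta u_n\|_2^2=t_{v_n}^2\|\Delta v_n\|_2^2$, with $\|\Delta u_n\|_2$ already pinched between two positive constants), and it cannot be read off from the bare Palais--Smale condition, since $I$ is constant along the scaling orbits $s\mapsto v_s$ and these orbits are unbounded. Only the degeneration $t_{v_n}\to0$ need be excluded, as $t_{v_n}\to\infty$ is harmless (then $\max\{1,t_{v_n}^{-1}\}=1$). I expect to rule it out using the boundedness and the lower bound $\|\Delta u_n\|_2\geq\sqrt{\delta}$ just obtained for $\{u_n\}\subset\mathcal{Q}_{p,q}(c)$, together with the fiber estimate of Lemma \ref{tpc} and the positivity and finiteness of $c_{\mathcal{G}}$ (arguing by contradiction that $t_{v_n}\to 0$ would force $\|\Delta v_n\|_2\to\infty$ incompatibly with $\|dI(v_n)\|\to 0$ at a fixed positive level). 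Once $t_{v_n}\geq t_0>0$ is secured, the estimate of the previous paragraph closes the argument, and $\{u_n\}\subset\mathcal{Q}_{p,q}(c)$ is the required Palais--Smale sequence for $E_{p,q}|_{S(c)}$ at level $c_{\mathcal{G}}$.
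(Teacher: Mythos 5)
Your overall scheme is the same as the paper's (apply Ghoussoub's minimax principle to $I=\Psi|_{S(c)}$, project onto $\mathcal{Q}_{p,q}(c)$ via $u\mapsto u_{t_u}$, and transfer the Palais--Smale property through the bound $\|\phi_{t^{-1}}\|_{H^2}\leq\max\{1,t^{-1}\}\|\phi\|_{H^2}$), and you correctly isolate the crux: one must keep $t_{v_n}$ away from $0$, equivalently keep $\{v_n\}$ bounded in $H^2$. But your proposed resolution of this point fails. You claim that $t_{v_n}\to 0$ (hence $\|\Delta v_n\|_2\to\infty$) is incompatible with $\|dI(v_n)\|_*\to0$ at a fixed positive level. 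It is not: $I$ is invariant along the scaling orbits $s\mapsto v_s$ (since $(v_s)_{t_v/s}=v_{t_v}$), and along such an orbit the dual norm of $dI$ actually degenerates to zero. Indeed, by Lemma \ref{cc2}, for $\varphi\in T_{v_s}S(c)$ with $\|\varphi\|_{H^2}\le1$ one has $dI(v_s)[\varphi]=dE_{p,q}(v_{t_v})[\varphi_{t_v/s}]$, where $\|\Delta\varphi_{t_v/s}\|_2\le t_v/s$ and $\|\varphi_{t_v/s}\|_2\le 1$; by the Gagliardo--Nirenberg inequality $\|\varphi_{t_v/s}\|_q$ and $\|\varphi_{t_v/s}\|_p$ then tend to $0$ as $s\to\infty$, so $\|dI(v_s)\|_*\to0$ even when $v_{t_v}$ is \emph{not} a critical point. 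Consequently, for any $v$ with $I(v)=c_{\mathcal{G}}$ whose projection $v_{t_v}$ is not a critical point, the sequence $v_n=v_{s_n}$ with $s_n\to\infty$ is a Palais--Smale sequence for $I$ at level $c_{\mathcal{G}}$ with $t_{v_n}\to0$ and $\|\Delta v_n\|_2\to\infty$; its projection $u_n\equiv v_{t_v}$ is constant and non-critical, so nothing is gained. Your argument gives no way to exclude that the bare minimax principle hands you exactly such a degenerate sequence, and Lemma \ref{tpc} cannot repair this either, since it provides an \emph{upper} bound for $t_u$, not a lower one.

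The missing idea --- and the step the paper's proof is actually built around --- is to exploit the homotopy stability of $\mathcal{G}$ \emph{before} invoking the minimax principle. Given a minimizing sequence of sets $\{D_n\}\subset\mathcal{G}$, the deformation $\eta(s,u)=u_{1-s+st_u}$ (continuous by Lemma \ref{rem}, equal to the identity at $s=0$, so admissible with $B=\emptyset$) produces sets $A_n=\eta(1,D_n)=\{u_{t_u}:u\in D_n\}\in\mathcal{G}$ which lie inside $\mathcal{Q}_{p,q}(c)$ and are still minimizing because $I(u_{t_u})=I(u)$. The refined form of Ghoussoub's theorem then yields a Palais--Smale sequence $\{v_n\}$ for $I$ at level $c_{\mathcal{G}}$ satisfying in addition $\mathrm{dist}_{H^{2}(\mathbb{R}^{N})}(v_n,A_n)\to0$. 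Since $E_{p,q}$ is coercive on $\mathcal{Q}_{p,q}(c)$ (Lemma \ref{cpq}$(iv)$) and the sets $A_n$ carry bounded energy, $\bigcup_n A_n$ is uniformly bounded in $H^2$, hence $\{v_n\}$ is bounded; combined with the lower bound on $\|\Delta u_n\|_2$ from Lemma \ref{cpq}$(ii)$, this gives $t_{v_n}^{-2}=\|\Delta v_n\|_2^2/\|\Delta u_n\|_2^2\le C$, which is exactly the estimate your proof lacks. Without anchoring the Palais--Smale sequence to a minimizing family of sets already contained in $\mathcal{Q}_{p,q}(c)$, the scaling degeneracy described above cannot be ruled out.
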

\begin{proof}
Let $\{D_{n}\}\subset\mathcal{G}$ be an arbitrary minimizing sequence of $c_{\mathcal{G}}$. We define the mapping
$$\eta:[0,1]\times S(c)\rightarrow S(c),\qquad\eta(s,u)=u_{1-s+st_{u}},$$
which is continuous by Lemma \ref{rem}.
Since $t_{u}=1$ for any $u\in\mathcal{Q}_{p,q}(c)$ and $\emptyset=B\subset\mathcal{Q}_{p,q}(c)$, we have $\eta(s,u)=u$ for any $(s,u)\in (\{0\}\times S(c))$. Thus, by the definition of $\mathcal{G}$, one has
 \begin{equation}
 	A_{n}:=\eta(1,D_{n})=\{u_{t_{u}}:u\in D_{n}\}\in\mathcal{G}.
 \end{equation}	
 In particular, $A_{n}\subset\mathcal{G}$ for all $n\in\mathbb{N}^+$. Let $v\in A_{n}$, $i.e.\ v=u_{t_{u}}$ for some $u\in D_{n}$ and $I(v)=E_{p,q}(v_{t_{v}})=E_{p,q}(v)=E_{p,q}(u_{t_{u}})=I(u)$. It is clear that $\max_{A_{n}}I=\max_{D_{n}}I\rightarrow c_{\mathcal{G}}$. Therefore $\{A_{n}\}\subset\mathcal{Q}_{p,q}(c)$ is another minimizing sequence of $c_{\mathcal{G}}$. Now, using the equivalent minimax principle \cite{GHON}, we obtain a Palais-Smale sequence $\{v_{n}\}\subset S(c)$ for $I$ at the level $c_{\mathcal{G}}$ such that $dist_{H^{2}(\mathbb{R}^{N})}(v_{n},A_{n})\rightarrow0$ as $n\rightarrow\infty$. Let
 $$t_{n}=t_{v_{n}}\quad\mathrm{and}\quad u_{n}=(v_{n})_{t_{n}}=(v_{n})_{t_{v_{n}}}.$$
 Then we claim that there exists a $C>0$ such that,
 \begin{equation}
 t_{n}^{-2}\leq C
 \end{equation}
 for $n\in\mathbb{N}$ large enough. First, we notice that
 $$t_{n}^{-2}=\frac{\int_{H^{2}(\mathbb{R}^{N})}|\Delta v_{n}|^{2}dx}{\int_{H^{2}(\mathbb{R}^{N})}|\Delta u_{n}|^{2}dx}.$$
Since $E_{p,q}(u_{n})=I(v_{n})\rightarrow c_{\mathcal{G}}$, by Lemma \ref{uni} and Lemma \ref{cpq}, we can deduce that $|\Delta u_{n}\|_{2}$ is bounded from below by a positive constant. On the other hand, since $\{A_{n}\}\subset\mathcal{Q}_{p,q}(c)$ is a minimizing sequence for $c_{\mathcal{G}}$ and $E_{p,q}$ is coercive on $\mathcal{Q}_{p,q}(c)$, we conclude that $\{A_{n}\}$ is uniformly bounded in $H^{2}(\mathbb{R}^{N})$ and thus from $dist_{H^{2}(\mathbb{R}^{N})}(v_{n},A_{n})\rightarrow0$ as $n\rightarrow\infty$, it follows that $\sup_{n}\|v_{n}\|<\infty$. Clearly, this proves the claim.

Now, for $\{u_{n}\}\subset\mathcal{Q}_{p,q}(c)$, it follows that
$$E_{p,q}(u_{n})=I(u_{n})=I(v_{n})\rightarrow c_{\mathcal{G}}.$$
We then only need to show that $\{u_{n}\}$ is a Palais-Smale sequence for $E_{p,q}$ on $S(c)$ at level $c_{\mathcal{G}}$. For any $\phi\in T_{u_{n}}S(c)$, we have
$$\int_{\mathbb{R}^{N}}v_{n}\phi_{t_{n}^{-1}}dx=\int_{\mathbb{R}^{N}}(v_{n})_{t_{n}}\phi dx=\int_{\mathbb{R}^{N}}u_{n}\phi dx,$$
which means $\phi_{t_{n}^{-1}}\in T_{v_{n}}S(c)$. Also, by the claim it follows that
$$\|\phi_{t_{n}^{-1}}\|_{H^{2}(\mathbb{R}^{N})}\leq\max\{\sqrt{C},1\}\|\phi\|_{H^{2}(\mathbb{R}^{N})}.$$
Denoting by $\|\cdot\|_{\ast}$ the dual norm of $(T_{u}S(c))^{\ast}$ and using Lemma \ref{cc2}, we deduce that
\begin{align*}
\|dE_{p,q}(u_{n})\|_{\ast}&=\sup\limits_{\phi\in T_{u_{n}}S(c),\|\phi\|_{H^{2}(\mathbb{R}^{N})}\leq1}|dE_{p,q}(u_{n})[\phi]|\\
&=\sup\limits_{\phi\in T_{u_{n}}S(c),\|\phi\|_{H^{2}(\mathbb{R}^{N})}\leq1}|dE_{p,q}((v_{n})_{t_{n}})[(\phi_{t_{n}^{-1}})_{t_{n}}]|\\
&=\sup\limits_{\phi\in T_{u_{n}}S(c),\|\phi\|_{H^{2}(\mathbb{R}^{N})}\leq1}|dI(v_{n})[\phi_{t_{n}^{-1}}]|\\
&\leq\|dI(v_{n})\|_{\ast}\cdot\sup\limits_{\phi\in T_{u_{n}}S(c),\|\phi\|_{H^{2}(\mathbb{R}^{N})}\leq1}\|\phi_{t_{n}^{-1}}\|_{H^{2}(\mathbb{R}^{N})}\\
&\leq\max\{\sqrt{C},1\}\|dI(v_{n})\|_{\ast}.\\
\end{align*}
Since $v_{n}\subset S(c)$ is a Palais-Smale sequence of $I$, it follows that $\|dI(v_{n})\|_{\ast}\rightarrow0$ as $n\rightarrow\infty$. Hence, we deduce that $\{u_{n}\}\subset\mathcal{Q}_{p,q}(c)$ is a Palais-Smale sequence for $E_{p,q}$ on $S(c)$ at the level $c_{\mathcal{G}}$.
\end{proof}

Finally, we obtain the following lemma, which is crucial in the proof of Theorem \ref{Thm1.1}.
\begin{lemma}\label{pse}
 Let $2+\frac{8}{N}\leq q<p<4^*$, If $q=2+\frac{8}{N}$, we further assume that $\mu c^{\frac{4}{N}}<\frac{N+4}{NC^q_{N,q}}$. There exists a Palais-Smale sequence $\{u_{n}\}\subset\mathcal{Q}_{p,q}(c)$ at level $m_{p,q}(c)$ for $E_{p,q}$ restricted on $S(c)$.
\end{lemma}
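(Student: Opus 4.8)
The plan is to deduce this from Lemma \ref{ps1} by feeding it the simplest possible homotopy stable family, namely the collection of all one-point subsets of $S(c)$. Concretely, I would take
$$\mathcal{G} = \big\{\{u\} : u \in S(c)\big\}, \qquad B = \emptyset.$$
First I would check that $\mathcal{G}$ is a homotopy stable family of compact subsets of $S(c)$ with closed boundary $B=\emptyset$. Each singleton is compact and (vacuously) contains $\emptyset$, so condition $(i)$ holds; and for condition $(ii)$, if $\eta\in C([0,1]\times S(c),S(c))$ is any admissible deformation, then for $A=\{u\}$ the image $\eta(\{1\}\times A)=\{\eta(1,u)\}$ is again a singleton, hence belongs to $\mathcal{G}$. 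Thus $\mathcal{G}$ qualifies, and the hypotheses of Lemma \ref{ps1} are in place once we identify the level $c_{\mathcal{G}}$.

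Next I would compute the minimax value. By the very definition of $I=\Psi|_{S(c)}$ and of $\Psi$, together with Lemma \ref{uni}, we have $I(u)=E_{p,q}(u_{t_u})=\max_{t>0}E_{p,q}(u_t)$ for every $u\in S(c)$. Since each element of $\mathcal{G}$ is a singleton, the inner maximum is trivial and
$$c_{\mathcal{G}}=\inf_{A\in\mathcal{G}}\max_{u\in A}I(u)=\inf_{u\in S(c)}I(u)=\inf_{u\in S(c)}\max_{t>0}E_{p,q}(u_t)=\gamma(c).$$
The identity $\gamma(c)=m_{p,q}(c)$ was already established at the start of the proof of the monotonicity lemma (Lemma \ref{dea}), so $c_{\mathcal{G}}=m_{p,q}(c)$, which is strictly positive by Lemma \ref{cpq}$(iii)$. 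Hence $c_{\mathcal{G}}>0$, and Lemma \ref{ps1} applies verbatim to produce a Palais--Smale sequence $\{u_n\}\subset\mathcal{Q}_{p,q}(c)$ for $E_{p,q}|_{S(c)}$ at level $c_{\mathcal{G}}=m_{p,q}(c)$, which is exactly the assertion.

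As for where the difficulty lies: for this particular statement there is essentially no new obstacle, because all the genuine work has been front-loaded into the earlier lemmas — the $C^1$-regularity of the auxiliary functional $\Psi$ (Lemma \ref{cc1}, via the $C^1$-dependence $u\mapsto t_u$ of Lemma \ref{rem}), and the rescaling argument inside Lemma \ref{ps1} that both keeps the extracted sequence in $\mathcal{Q}_{p,q}(c)$ and transfers the Palais--Smale property from $I$ to $E_{p,q}|_{S(c)}$ through the uniform two-sided control of the dilation parameter $t_n$. Consequently the only point that requires genuine care here is the bookkeeping verification that the singleton minimax value $c_{\mathcal{G}}$ coincides with the constrained Pohozaev infimum $m_{p,q}(c)$; I would make sure to invoke the $\gamma(c)=m_{p,q}(c)$ identity explicitly rather than re-deriving it, so that the present proof reduces to a clean, formal application of Lemma \ref{ps1}.
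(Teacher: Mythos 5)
Your proposal is correct and follows essentially the same route as the paper: the paper also takes $\mathcal{G}$ to be the class of all singletons in $S(c)$ and applies Lemma \ref{ps1}, the only cosmetic difference being that the paper re-derives the identity $c_{\mathcal{G}}=m_{p,q}(c)$ by the two inequalities $E_{p,q}(u_{t_u})\geq m_{p,q}(c)$ and $E_{p,q}(u)=E_{p,q}(u_1)\geq c_{\mathcal{G}}$ for $u\in\mathcal{Q}_{p,q}(c)$, whereas you cite the equivalent identity $\gamma(c)=m_{p,q}(c)$ already established in the proof of Lemma \ref{dea}. Both are valid, and your explicit verification that singletons form a homotopy stable family is a harmless addition the paper leaves implicit.
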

\begin{proof}
	We use Lemma \ref{ps1} in the particular case where $\mathcal{G}$ is the class of all singletons belonging to $S(c)$ and then we denote it by $\mathcal{\overline{G}}$. Since $m_{p,q}(c)>0$, we only need to prove that $m_{p,q}(c)=c_{\mathcal{\overline{G}}}$. We notice that
	$$c_{\mathcal{\overline{G}}}=\inf\limits_{A\in\mathcal{\overline{G}}}\max\limits_{u\in A}I(u)=\inf\limits_{u\in S(c)}\sup\limits_{t>0}E_{p,q}(u_{t})=\inf\limits_{u\in S(c)}E_{p,q}(u_{t_{u}}).$$
For any $u\in S(c)$, we deduce from $u_{t_{u}}\in\mathcal{Q}_{p,q}(c)$ that $E_{p,q}(u_{t_{u}})\geq m_{p,q}(c)$. Therefore $c_{\mathcal{\overline{G}}}\geq m_{p,q}(c)$. On the other hand, for any $u\in\mathcal{Q}_{p,q}(c)$, we have $t_{u}=1$ and thus $E_{p,q}(u)=E_{p,q}(u_{1})\geq c_{\mathcal{\overline{G}}}$. This clearly implies that $m_{p,q}(c)\geq c_{\mathcal{\overline{G}}}$.
The proof is complete.
\end{proof}

\begin{lemma}\label{wcs}
 Let $2+\frac{8}{N}\leq q<p<4^*$. If $q=2+\frac{8}{N}$, we further assume that $\mu c^{\frac{4}{N}}<\frac{N+4}{NC^q_{N,q}}$. Let $\{u_{n}\}\subset\mathcal{Q}_{p,q}(c)$ be a Palais-Smale sequence for $E_{p,q}$ restricted on $S(c)$ at level $m_{p,q}(c)$. Then there exists $u\in S(c)$ and $\omega>0$ such that, up to translation and up to the extraction of a subsequence, $u_{n}\rightarrow u$ strongly in $H^{2}(\mathbb{R}^{N})$ and $\Delta^{2}u+\omega u-\mu|u|^{q-2}u-|u|^{p-2}u=0$.	
\end{lemma}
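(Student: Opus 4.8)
The plan is to run a concentration--compactness argument, using the coercivity of $E_{p,q}$ on $\mathcal{Q}_{p,q}(c)$ (Lemma \ref{cpq}) to gain boundedness and the monotonicity of $c\mapsto m_{p,q}(c)$ (Lemma \ref{dea}) to rule out loss of mass. First I would establish boundedness: since $\{u_n\}\subset\mathcal{Q}_{p,q}(c)$ and $E_{p,q}(u_n)\to m_{p,q}(c)<\infty$, the coercivity in Lemma \ref{cpq}$(iv)$ forces $\sup_n\|u_n\|_{H^2}<\infty$. Being a Palais--Smale sequence for the constrained functional, there are Lagrange multipliers $\omega_n\in\mathbb{R}$ with $\Delta^2 u_n+\omega_n u_n-\mu|u_n|^{q-2}u_n-|u_n|^{p-2}u_n\to 0$ in $H^{-2}$; testing against $u_n$ and using $\|u_n\|_2^2=c$ gives $\omega_n c=-\|\Delta u_n\|_2^2+\mu\|u_n\|_q^q+\|u_n\|_p^p+o(1)$, so $\{\omega_n\}$ is bounded and, along a subsequence, $\omega_n\to\omega$. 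Combining this with the constraint $Q_{p,q}(u_n)=0$ yields $\omega_n c=\mu(1-\gamma_q)\|u_n\|_q^q+(1-\gamma_p)\|u_n\|_p^p+o(1)$; since $0<\gamma_q<\gamma_p<1$ for $p<4^*$ and $\|\Delta u_n\|_2^2=\mu\gamma_q\|u_n\|_q^q+\gamma_p\|u_n\|_p^p$ is bounded below by Lemma \ref{cpq}$(ii)$, a comparison of the two positive combinations gives $\omega>0$.

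Next I would exclude vanishing. If $\sup_{y}\int_{B(y,R)}|u_n|^2\,dx\to 0$, then Lemma \ref{pll} forces $\|u_n\|_q^q,\|u_n\|_p^p\to 0$, whence $\|\Delta u_n\|_2^2=\mu\gamma_q\|u_n\|_q^q+\gamma_p\|u_n\|_p^p\to 0$, contradicting Lemma \ref{cpq}$(ii)$. Hence, after a translation $\tilde u_n=u_n(\cdot+y_n)$ (which preserves $E_{p,q}$, $Q_{p,q}$ and the Palais--Smale property), a subsequence satisfies $\tilde u_n\rightharpoonup u\neq 0$ in $H^2$ with $c_1:=\|u\|_2^2\in(0,c]$. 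Passing to the weak limit in the equation, using $\omega_n\to\omega$ and the local compactness of $H^2\hookrightarrow L^r_{loc}$ for $r<4^*$ to treat the nonlinear terms, $u$ solves $\Delta^2 u+\omega u-\mu|u|^{q-2}u-|u|^{p-2}u=0$; the Pohozaev identity then gives $Q_{p,q}(u)=0$, i.e. $u\in\mathcal{Q}_{p,q}(c_1)$.

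The heart of the argument, and the step I expect to be the main obstacle, is to prove $c_1=c$. Writing $w_n=\tilde u_n-u\rightharpoonup 0$, the Brezis--Lieb lemma splits both functionals, $E_{p,q}(\tilde u_n)=E_{p,q}(u)+E_{p,q}(w_n)+o(1)$ and $Q_{p,q}(\tilde u_n)=Q_{p,q}(u)+Q_{p,q}(w_n)+o(1)$, so $Q_{p,q}(w_n)\to 0$. The crucial point is the algebraic identity
\begin{equation*}
E_{p,q}(w_n)-\tfrac{1}{q\gamma_q}Q_{p,q}(w_n)=\Big(\tfrac12-\tfrac{1}{q\gamma_q}\Big)\|\Delta w_n\|_2^2+\Big(\tfrac{\gamma_p}{q\gamma_q}-\tfrac1p\Big)\|w_n\|_p^p,
\end{equation*}
where both coefficients are nonnegative because $q\gamma_q\ge 2$ and $p>q$; hence $\liminf_n E_{p,q}(w_n)\ge 0$. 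Therefore $m_{p,q}(c)=E_{p,q}(u)+\lim_n E_{p,q}(w_n)\ge E_{p,q}(u)\ge m_{p,q}(c_1)$. Since $c\mapsto m_{p,q}(c)$ is non-increasing (Lemma \ref{dea}), also $m_{p,q}(c)\le m_{p,q}(c_1)$, so equality holds throughout: $E_{p,q}(u)=m_{p,q}(c_1)$ and $\lim_n E_{p,q}(w_n)=0$. Thus $u$ is a minimizer at level $c_1$ solving the Euler--Lagrange equation with $\omega>0$, so the strict monotonicity of $c\mapsto m_{p,q}(c)$ applies in a right neighborhood of $c_1$; together with the global monotonicity this gives $m_{p,q}(c)<m_{p,q}(c_1)$ for every $c>c_1$, contradicting $m_{p,q}(c)=m_{p,q}(c_1)$ unless $c_1=c$.

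Finally I would upgrade to strong convergence. Since $c_1=c$, the Brezis--Lieb splitting of the $L^2$ norm gives $\|w_n\|_2\to 0$, and then the Gagliardo--Nirenberg inequality (Lemma \ref{gni}), with $\|\Delta w_n\|_2$ bounded and $q(1-\gamma_q)>0$, forces $\|w_n\|_q^q\to 0$. Combining $\lim_n E_{p,q}(w_n)=0$ with $Q_{p,q}(w_n)\to 0$ to eliminate $\|\Delta w_n\|_2^2$ yields $\big(\tfrac{\gamma_p}{2}-\tfrac1p\big)\|w_n\|_p^p\to 0$; since $p>2+\tfrac8N$ the coefficient is positive, so $\|w_n\|_p^p\to 0$, and then $Q_{p,q}(w_n)\to 0$ gives $\|\Delta w_n\|_2^2\to 0$. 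Together with $\|w_n\|_2\to 0$ this shows $w_n\to 0$ in $H^2$, i.e. $\tilde u_n\to u$ strongly with $\|u\|_2^2=c$, which completes the proof.
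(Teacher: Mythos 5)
Your proof is correct and follows essentially the same route as the paper's: exclude vanishing via Lions' lemma together with the lower bound $\inf\|\Delta u_n\|_2^2>0$, pass to a translated weak limit solving the Euler--Lagrange equation with $Q_{p,q}=0$, split $E_{p,q}$ and $Q_{p,q}$ by Brezis--Lieb, show the remainder energy is nonnegative via a linear combination of $E_{p,q}$ and $Q_{p,q}$, and force $c_1=c$ by combining the non-increasing property of $m_{p,q}$ with its strict decrease near an attained level (Lemma \ref{dea}). The only cosmetic deviations are that you subtract $\tfrac{1}{q\gamma_q}Q_{p,q}$ where the paper subtracts $\tfrac12 Q_{p,q}$, you obtain $\omega>0$ directly at the sequence level from the multiplier formula plus $Q_{p,q}(u_n)=0$ rather than from the limit equation, and you use Gagliardo--Nirenberg instead of Lions' lemma in the final strong-convergence step.
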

\begin{proof}
By Lemma \ref{cpq} $(iv)$, we can assume without loss of generality that $\{u_{n}\}\subset\mathcal{Q}_{p,q}(c)$ is a bounded Palais-Smale sequence in $H^{2}(\mathbb{R}^{N})$. After a suitable translation in $\mathbb{R}^{N}$ and up to the extraction of a subsequence, we  claim that $\{u_{n}\}$ is non-vanishing. Indeed, if not, then applying Lemma \ref{pll}, we infer that
$$0=\lim\limits_{n\rightarrow\infty}\int_{\mathbb{R}^{N}}|u_{n}|^{p}dx=\lim\limits_{n\rightarrow\infty}\int_{\mathbb{R}^{N}}|u_{n}|^{q}dx.$$
It follows that
$$\|\Delta u_{n}\|_{2}^{2}=\mu\gamma_{q}\|u_{n}\|_{q}^{q}+\gamma_{p}\|u_{n}\|_{p}^{p}\rightarrow0\quad\mathrm{as}\quad n\rightarrow\infty,$$
which contradicts the fact $\inf\limits_{u_{n}\in\mathcal{Q}_{p,q}(c)}\|\Delta u_{n}\|_{2}^{2}>0$ in Lemma \ref{cpq} $(ii)$.
Since  $\{u_{n}\}\subset\mathcal{Q}_{p,q}(c)$ is bounded in $H^{2}(\mathbb{R}^{N})$, from the condition that $\|dE_{p,q}(u_{n})\|_{\ast}\rightarrow0$ and Lemma $3$ in \cite{Hbe2}, it follows that
$$\Delta^{2}u_{n}+\omega_{n}u_{n}-\mu_{n}|u_{n}|^{q-2}u_{n}-|u_{n}|^{p-2}u_{n}\rightarrow0\quad\mathrm{in}\quad (H^{2}(\mathbb{R}^{N}))^*,$$
where
$$-c\omega_{n}=(\int_{\mathbb{R}^{N}}|\Delta u_{n}|^{2}dx-\mu\int_{\mathbb{R}^{N}}|u_{n}|^{q}dx-\int_{\mathbb{R}^{N}}|u_{n}|^{p}dx).$$
Noting that $\omega_{n}\rightarrow\omega$ for some $\omega\in\mathbb{R}$, we have
\begin{equation}\label{eq9801}
\Delta^{2}u_{n}(\cdot+y_{n})+\omega u_{n}(\cdot+y_{n})-\mu_{n}|u_{n}(\cdot+y_{n})|^{q-2}u_{n}(\cdot+y_{n})-|u_{n}(\cdot+y_{n})|^{p-2}u_{n}(\cdot+y_{n})\rightarrow0\quad\mathrm{in}\quad (H^{2}(\mathbb{R}^{N}))^*	
\end{equation}
for some $\{y_{n}\}\subset\mathbb{R}^{N}$. Since $\{u_{n}\}$ is non-vanishing, up to a subsequence, there exists $\{y_{n}^{1}\}\subset\mathbb{R}^{N}$ and $z^{1}$ such that $u_{n}(\cdot+y_{n}^{1})\rightharpoonup z^{1}$ in $H^{2}(\mathbb{R}^{N})$, $u_{n}(\cdot+y_{n}^{1})\rightarrow z^{1}$ in $L_{loc}^{r}(\mathbb{R}^{N})$ for all $r\in(2,4^{*})$ and $u_{n}(\cdot+y_{n}^{1})\rightarrow z^{1}$ almost everywhere in $\mathbb{R}^{N}$, where $z^{1}\in B(c)=\{u\in H^{2}(\mathbb{R}^{N}),\|u\|_{2}^{2}\leq c\}$.
In view of \eqref{eq9801}, we obtain
\begin{equation}\label{eq9802}
\Delta^{2}z^{1}+\omega z^{1}-\mu_{n}|z^{1}|^{q-2}z^{1}-|z^{1}|^{p-2}z^{1}=0.
\end{equation}
By the Nehari and Pohozaev identity corresponding to \eqref{eq9802}, we get $Q_{p,q}(z^{1})=0$. Let $v_{n}^{1}:=u_{n}-z^{1}(\cdot-y_{n}^{1})$
for every $n\in\mathbb{N}^+$. Clearly, $v_{n}^{1}(\cdot+y_{n})\rightharpoonup0$ in $H^{2}(\mathbb{R}^{N})$. Thus it follows that
\begin{equation}\label{eq9803}
\|u_{n}\|_{2}^{2}=\|v_{n}^{1}(\cdot+y_{n})+z^{1}\|_{2}^{2}=	\|v_{n}^{1}\|_{2}^{2}+\|z^{1}\|_{2}^{2}+o_{n}(1)\textcolor[rgb]{0.7,0.04,0.8}{,}
\end{equation}

\begin{equation}\label{eq9804}
\|\Delta u_{n}\|_{2}^{2}=\|\Delta v_{n}^{1}(\cdot+y_{n})+\Delta z^{1}\|_{2}^{2}=	\|\Delta v_{n}^{1}\|_{2}^{2}+\|\Delta z^{1}\|_{2}^{2}+o_{n}(1).
\end{equation}
By Brezis-Lieb lemma, we get
$$\|u_{n}\|_{p}^{p}=\|v_{n}^{1}\|_{p}^{p}+\|z^{1}\|_{p}^{p}+o_{n}(1)\quad\mathrm{and}\quad\|u_{n}\|_{q}^{q}=\|v_{n}^{1}\|_{q}^{q}+\|z^{1}\|_{q}^{q}+o_{n}(1).$$
It follows that
\begin{equation}\label{eq9805}
m_{p,q}(c)=\lim\limits_{n\rightarrow\infty}E_{p,q}(u_{n})=\lim\limits_{n\rightarrow\infty}E_{p,q}(u_{n}(\cdot+y_{n}))=\lim\limits_{n\rightarrow\infty}E_{p,q}(v_{n}^{1})+E_{p,q}(z^{1}).
\end{equation}
We claim that $\lim\limits_{n\rightarrow\infty}E_{p,q}(v_{n}^{1})\geq0$. Indeed,
since $Q_{p,q}(u_{n})=0$ and $Q_{p,q}(z^{1})=0$, it then follows that
$\lim\limits_{n\rightarrow\infty}Q_{p,q}(v_{n}^{1})=0$. Then, for any $2+\frac{8}{N}\leq q<p<4^*$, we have
$$\lim\limits_{n\rightarrow\infty}E_{p,q}(v_{n}^{1})=\lim\limits_{n\rightarrow\infty}(E_{p,q}(v_{n}^{1})-\frac{1}{2}Q_{p,q}(v_{n}^{1}))=\lim\limits_{n\rightarrow\infty}\big[\mu(\frac{\gamma_{q}}{2}-\frac{1}{q})\|v_{n}^{1}\|_{q}^{q}+(\frac{\gamma_{p}}{2}-\frac{1}{p})\|v_{n}^{1}\|_{p}^{p}\big]\geq0.$$
Now we set $c_{1}:=\|z^{1}\|_{2}^{2}\in(0,c]$. Since $\lim\limits_{n\rightarrow\infty}E_{p,q}(v_{n}^{1})\geq0$ and $z^{1}\in\mathcal{Q}_{p,q}(c_{1})$, it follows from \eqref{eq9805} that
$$m_{p,q}(c)=\lim\limits_{n\rightarrow\infty}E_{p,q}(v_{n}^{1})+E_{p,q}(z^{1})\geq E_{p,q}(z^{1})\geq m_{p,q}(c_{1}).$$
Recalling that $m_{p,q}(c)$ is non-increasing in $c>0$, then we have
\begin{equation}\label{eq9806}
m_{p,q}(c)=E_{p,q}(z^{1})=m_{p,q}(c_{1})	
\end{equation}
and
\begin{equation}\label{eq9807}
	\lim\limits_{n\rightarrow\infty}E_{p,q}(v_{n}^{1})=0.
\end{equation}

At last, we need to show that $c_{1}=c$. First similar in Lemma \ref{dea}, by \eqref{eq9802} and the fact $Q_{p,q}(z^1)=0$, we get $\omega>0$. If $c_{1}<c$, taking into account \eqref{eq9802}, \eqref{eq9806} and Lemma \ref{dea}, we would have
$$E_{p,q}(z^{1})=m_{p,q}(c_{1})>m_{p,q}(c),$$
which contradicts \eqref{eq9806}. Therefore, $c_{1}=\|z^{1}\|_{2}^{2}=c$ and $\|v_{n}^{1}\|_{2}^{2}\rightarrow0$. By Lemma \ref{pll}, we have
$$0=\lim\limits_{n\rightarrow\infty}\int_{\mathbb{R}^{N}}|v_{n}^{1}|^{p}dx=\lim\limits_{n\rightarrow\infty}\int_{\mathbb{R}^{N}}|v_{n}^{1}|^{q}dx,$$
it follows that
$$\lim\limits_{n\rightarrow\infty}\int_{\mathbb{R}^{N}}|\Delta v_{n}^{1}|^{2}dx=0.$$
Thus, $u_{n}(\cdot+y_{n}^{1})\rightarrow z^{1}$ strongly in $H^{2}(\mathbb{R}^{N})$. At this point, this lemma is proved.
\end{proof}

\begin{pot}
By Lemma \ref{cpq} and Lemma \ref{pse}, we have a bounded Palais-Smale sequence $\{u_{n}\}\subset\mathcal{Q}_{p,q}(c)$ for the constrained functional $E_{p,q}|_{S(c)}$ at the level $m_{p,q}(c)>0$. And then Lemma \ref{wcs} applies and provides the existence of a ground state $u\in S(c)$ at the level $m_{p,q}(c)$.
\end{pot}

\section{Ground States In Sobolev Critical Case}\label{ground states2}

In this section we will establish the existence of normalized ground states in Sobolev critical case. The main difficuly and challenge is to recover the compactness. To do this, we need to give a upper bound for $m_{4^*,q}(c)$.

\begin{lemma} \label{ub}
	Let $N\geq5$, $c>0$, $\mu>0$ and $2+\frac{8}{N}\leq q < 4^*$. If $q=2+\frac{8}{N}$, we further assume that $\mu c^{\frac{4}{N}}<\frac{N+4}{NC^q_{N,q}}$.  If $N = 5$, we further assume that $q > \frac{22}{3}$. Then $m_{4^*,q}(c) < \frac{2}{N}\mathcal{S}^{\frac{N}{4}}$, where $\mathcal{S}$ is the best constant in the Sobolev embedding $H^{2}(\mathbb{R}^{N})\hookrightarrow L^{4^*}(\mathbb{R}^{N})$.
\end{lemma}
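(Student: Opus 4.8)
The plan is to build an explicit competitor in $S(c)$ from (truncated) extremals of the embedding $H^{2}\hookrightarrow L^{4^*}$ and to exploit the fibering map $t\mapsto u_{t}$ from Lemma~\ref{uni} together with the characterization $m_{4^*,q}(c)=\inf_{u\in S(c)}\max_{t>0}E_{4^*,q}(u_{t})$ established in the course of Lemma~\ref{dea}. The first observation is that $\tfrac{2}{N}\mathcal{S}^{N/4}$ is exactly the ground-state level of the \emph{purely critical} problem: since $\gamma_{4^*}=1$, on the Pohozaev set of $\tfrac12\|\Delta u\|_{2}^{2}-\tfrac{1}{4^*}\|u\|_{4^*}^{4^*}$ a normalized extremal $U$ satisfies $\|\Delta U\|_{2}^{2}=\|U\|_{4^*}^{4^*}=\mathcal{S}^{N/4}$, and hence its energy is $(\tfrac12-\tfrac1{4^*})\mathcal{S}^{N/4}=\tfrac2N\mathcal{S}^{N/4}$. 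Thus the whole point is to show that the extra subcritical term $\tfrac{\mu}{q}\|u\|_{q}^{q}$ \emph{strictly lowers} this level.

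Concretely, let $U_{\varepsilon}(x)=c_{N}\big(\varepsilon/(\varepsilon^{2}+|x|^{2})\big)^{(N-4)/2}$ be the Talenti-type extremals, set $u_{\varepsilon}=\varphi\,U_{\varepsilon}$ with $\varphi$ a fixed cut-off equal to $1$ near the origin, and normalize $v_{\varepsilon}=\sqrt{c}\,u_{\varepsilon}/\|u_{\varepsilon}\|_{2}\in S(c)$. Since $m_{4^*,q}(c)\le\max_{t>0}E_{4^*,q}((v_{\varepsilon})_{t})$, it suffices to estimate this maximum as $\varepsilon\to0$. The ingredients are the standard expansions
\begin{align*}
\|\Delta u_{\varepsilon}\|_{2}^{2}&=\mathcal{S}^{N/4}+O(\varepsilon^{N-4}),\qquad
\|u_{\varepsilon}\|_{4^*}^{4^*}=\mathcal{S}^{N/4}+O(\varepsilon^{N}),\\
\|u_{\varepsilon}\|_{2}^{2}&\sim\varepsilon^{N-4}\ (5\le N\le 7),\qquad
\|u_{\varepsilon}\|_{q}^{q}\sim\varepsilon^{\,N-q(N-4)/2}\ \ (q>\tfrac{N}{N-4}).
\end{align*}
Because the purely critical part of $\max_{t}$ is scale invariant under the $L^{2}$-normalization, it contributes $\tfrac2N\big(\|\Delta u_{\varepsilon}\|_{2}^{2}\big)^{N/4}\big(\|u_{\varepsilon}\|_{4^*}^{4^*}\big)^{-(N-4)/4}=\tfrac2N\mathcal{S}^{N/4}+O(\varepsilon^{N-4})$, independently of the normalizing constant. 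A direct computation of the maximizer gives $t_{\varepsilon}\sim\varepsilon^{(N-4)/2}\to0$, and the bound $E_{4^*,q}((v_{\varepsilon})_{t_{\varepsilon}})\le \tfrac2N\mathcal{S}^{N/4}+O(\varepsilon^{N-4})-\tfrac{\mu}{q}t_{\varepsilon}^{\gamma_{q}q}\|v_{\varepsilon}\|_{q}^{q}$ reduces the proof to comparing the order of the gain $t_{\varepsilon}^{\gamma_{q}q}\|v_{\varepsilon}\|_{q}^{q}$ with the loss $O(\varepsilon^{N-4})$.

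This order comparison is the decisive step. Collecting exponents, with $\|v_{\varepsilon}\|_{q}^{q}=(c/\|u_{\varepsilon}\|_{2}^{2})^{q/2}\|u_{\varepsilon}\|_{q}^{q}$ and $\gamma_{q}q=N(q-2)/4$, the gain turns out to be of order $\varepsilon^{\theta}$ and the sign of $\theta-(N-4)$ is governed by the quantity $(N-4)\big[q(N-8)-2N\big]+32$; the strict inequality $m_{4^*,q}(c)<\tfrac2N\mathcal{S}^{N/4}$ holds precisely when this expression is negative. For $N\ge 6$ this is automatic on the entire admissible range $2+\tfrac8N\le q<4^*$ (the expression is affine in $q$ and negative at both endpoints), while for $N=5$ it reduces to $22-3q<0$, i.e. $q>\tfrac{22}{3}$, which is exactly the standing hypothesis. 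In the borderline mass-critical case $q=2+\tfrac8N$ the exponent $\gamma_{q}q=2$ coincides with that of $\|\Delta v_{\varepsilon}\|_{2}^{2}$, so one first absorbs the subcritical term into the quadratic coefficient; the hypothesis $\mu c^{4/N}<\frac{N+4}{NC^{q}_{N,q}}$ keeps the reduced coefficient positive, so that the maximum is still attained at a positive level and the induced correction again dominates the truncation error.

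I expect the genuine obstacle to be the careful tracking of $t_{\varepsilon}$ and of the subcritical and mass integrals in the low dimensions $5\le N\le 8$, where $U_{\varepsilon}$ just fails to be square-integrable and the truncation loss $\varepsilon^{N-4}$ is small. It is exactly this competition between the subcritical gain and the critical truncation error that forces the dimensional restriction $q>\tfrac{22}{3}$ when $N=5$, and everything else in the statement is arranged so that the gain wins.
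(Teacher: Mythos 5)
Your proposal is correct and follows essentially the same route as the paper: the paper also tests $m_{4^*,q}(c)$ with truncated Talenti bubbles $u_\epsilon$ normalized to mass $c$, bounds the fibering maximizer $t_\epsilon$ through the Pohozaev constraint, and reduces everything to comparing the subcritical gain (of the order of $\|v_\epsilon\|_q^q$) with the truncation loss $O(\epsilon^{N-4})$, arriving at exactly your criterion (automatic for $N\geq6$, and $q>\frac{22}{3}$ when $N=5$). The only difference is the gauge: the paper normalizes by the mass-preserving rescaling $u\mapsto t^{N/4}u(\sqrt{t}\,\cdot)$, so that $\|\Delta v_\epsilon\|_2$ and $\|v_\epsilon\|_{4^*}$ stay bounded and $t_\epsilon\in[t_0,t_1]$, whereas your multiplicative normalization places the same competitor on the same fiber with $t_\epsilon\sim\epsilon^{(N-4)/2}$ — note only that your ``gain $\sim\epsilon^{\theta}$'' bookkeeping is literally a power law just for $5\leq N<8$ (it is logarithmic for $N=8$ and of order one for $N>8$), although the resulting criterion is correct in all cases.
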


\begin{proof}
	Define $v_\epsilon(x) = \left(\sqrt[4]{c^{-1}\|u_{\epsilon}\|_2^2}\right) ^{\frac{N-4}{2}}u_{\epsilon}(\sqrt[4]{c^{-1}\|u_{\epsilon}\|_2^2}x)$, where $u_{\epsilon}$ is given by Appendix. Then
	$$
	\int_{\mathbb{R}^N}|v_\epsilon|^2dx = c, \int_{\mathbb{R}^N}|\Delta v_\epsilon|^2dx = \int_{\mathbb{R}^N}|\Delta u_\epsilon|^2dx, \int_{\mathbb{R}^N}|v_\epsilon|^{4^*}dx = \int_{\mathbb{R}^N}|u_\epsilon|^{4^*}dx,
	$$
	and for $q \in [2+\frac{8}{N}, 4^*)$,
	\begin{eqnarray}
	\int_{\mathbb{R}^N}|v_\epsilon|^{q}dx &=& \left(\sqrt[4]{c^{-1}\|u_{\epsilon}\|_2^2}\right) ^{\frac{N-4}{2}q-N}\int_{\mathbb{R}^N}|u_\epsilon|^{q}dx \nonumber \\
	&\geq& K_1c^{\frac{N}{4}-\frac{N-4}{8}q}
	\left\{
	\begin{array}{cc}
	1, & N > 8, \\
	|\log \epsilon|^{\frac{q-4}{2}}, & N = 8, \\
	\epsilon^{\frac{8-N}{4}(N-\frac{N-4}{2}q)}, & N=6,7, \\
	\epsilon^{\frac{3}{4}(N-\frac{N-4}{2}q)}, & N=5, q > 5, \\
	\epsilon^{\frac{15}{8}}|\log \epsilon|, & N=5, q = 5, \\
	\epsilon^{\frac{5}{8}q-\frac{5}{4}}, & N=5, \frac{18}{5} \leq q < 5,
	\end{array}
	\right.
	\end{eqnarray}
	for some $K_1 > 0$ and $\epsilon$ small enough, where the last inequality follows by Lemma \ref{lem App.1}. Next we use $v_\epsilon$ to estimate $m_{4^*,q}(c)$. By direct calculations, one has
	\begin{eqnarray} \label{eq 4.2}
	&& E_{4^*,q}((v_\epsilon)_{t_\epsilon}) \nonumber \\
	&=& \frac{1}{2}t_\epsilon^{2}\|\Delta v_\epsilon\|_{2}^{2}-\frac{\mu}{q}t_\epsilon^{\frac{N(q-2)}{4}}\|v_\epsilon\|_{q}^{q}-\frac{1}{4^*}t_\epsilon^{\frac{N(4^*-2)}{4}}\|v_\epsilon\|_{4^*}^{4^*} \nonumber \\
	&\leq& \frac{1}{2}t_\epsilon^{2}\left(\mathcal{S}^{\frac{N}{4}}+O(\epsilon^{N-4})\right) -\frac{1}{4^*}t_\epsilon^{4^*}\left(\mathcal{S}^{\frac{N}{4}}+O(\epsilon^{N})\right) \nonumber \\
	&-& \frac{\mu}{q}t_\epsilon^{\frac{N(q-2)}{4}}K_1c^{\frac{N}{4}-\frac{N-4}{8}q}
	\left\{
	\begin{array}{cc}
	1, & N > 8, \\
	|\log \epsilon|^{\frac{q-4}{2}}, & N = 8, \\
	\epsilon^{\frac{8-N}{4}(N-\frac{N-4}{2}q)}, & N=6,7, \\
	\epsilon^{\frac{3}{4}(N-\frac{N-4}{2}q)}, & N=5, q > 5, \\
	\epsilon^{\frac{15}{8}}|\log \epsilon|, & N=5, q = 5, \\
	\epsilon^{\frac{5}{8}q-\frac{5}{4}}, & N=5, \frac{18}{5} \leq q < 5,
	\end{array}
	\right.
	\end{eqnarray}
	where $t_\epsilon = t_{v_\epsilon}$ is given by Lemma \ref{uni} and $(v_\epsilon)_{t_\epsilon} \in\mathcal{Q}_{4^*,q}(c)$. The definition of $m_{4^*,q}(c)$ yields that $m_{4^*,q}(c) \leq E_{4^*,q}((v_\epsilon)_{t_\epsilon})$.

    We claim that there exist $t_0, t_1 > 0$ independent of $\epsilon$ such that $t_\epsilon \in [t_0,t_1]$ for $\epsilon > 0$ small. Suppose by contradiction that $t_\epsilon \rightarrow 0$ or $t_\epsilon \rightarrow \infty$ as $\epsilon \rightarrow 0$. The fact that $(v_\epsilon)_{t_\epsilon} \in\mathcal{Q}_{4^*,q}(c)$ yields that
    \begin{equation} \label{eq 4.3}
    \|\Delta v_\epsilon\|_{2}^{2} = \mu\gamma_{q}t_\epsilon^{\frac{N(q-2)}{4}-2}\|v_\epsilon\|_{q}^{q} + \gamma_{4^*}t_\epsilon^{\frac{N(4^*-2)}{4}-2}\|v_\epsilon\|_{4^*}^{4^*}.
    \end{equation}
    When $q > 2 + \frac{8}{N}$, the left side of \eqref{eq 4.3} is $\mathcal{S}^{\frac{N}{4}}+O(\epsilon^{N-4}) > 0$ for small $\epsilon$ while the right side of \eqref{eq 4.3} tends to $0$ as $t_\epsilon \rightarrow 0$ and tends to infinity as $t_\epsilon \rightarrow \infty$, which is impossible. When $q = 2 + \frac{8}{N}$ and $t_\epsilon \rightarrow \infty$, we find a similar contradiction since the right side of \eqref{eq 4.3} tends to infinity. Furthermore, when $q = 2 + \frac{8}{N}$,  Gagliardo-Nirenberg inequality shows that
    \begin{equation}
    	\|v_\epsilon\|_{q}^{q}\leq C^{q}_{N,q}\|\Delta v_\epsilon\|_{2}^{2}c^{\frac{4}{N}}.
    \end{equation}
    We further assume that $\mu c^{\frac{4}{N}}<\frac{N+4}{NC^q_{N,q}}$. Therefore,
    \begin{equation}
    \frac{N}{N+4}\mu\|v_\epsilon\|_{q}^{q} < \|\Delta v_\epsilon\|_{2}^{2} = \mathcal{S}^{\frac{N}{4}}+O(\epsilon^{N-4}).
    \end{equation}
    The above inequality, together with \eqref{eq 4.3}, enables us to show that it is impossible that $t_\epsilon \rightarrow 0$. Thus, the claim holds.

    In \eqref{eq 4.2}, when $N = 5$, $q > \frac{22}{3}$, or $N \geq 6$, $O(\epsilon^{N-4})$ can be controlled by the last term for $\epsilon > 0$ small enough. Hence,
    $$
    m_{4^*,q}(c) \leq E_{4^*,q}((v_\epsilon)_{t_\epsilon}) < \sup_{t \geq 0}\left( \frac{1}{2}t^2 - \frac{1}{4^*}t^{4^*}\right)\mathcal{S}^{\frac{N}{4}} = \frac{2}{N}\mathcal{S}^{\frac{N}{4}}.
    $$
    The proof is complete.
\end{proof}

\begin{proof}[Proof of Theorem \ref{Thm1.2}]
	Similarly to the discussions in Section \ref{ground states1}, we can obtain a bounded Palais-Smale sequence (in $H^{2}(\mathbb{R}^{N})$) $\{u_{n}\}\subset{\mathcal{Q}_{4^*,q}(c)}$ for the constrained functional $E_{4^*,q}|_{S(c)}$ at the level $m_{4^*,q}(c)$. By Lemma \ref{ub} and Lemma \ref{cpq}, $0 < m_{4^*,q}(c) < \frac{2}{N}\mathcal{S}^{\frac{N}{4}}$. After a suitable translation in $\mathbb{R}^{N}$ and up to the extraction of a subsequence, we  claim that $\{u_{n}\}$ is non-vanishing. Indeed, if not, then applying Lemma \ref{pll}, we infer that
    $$
    \lim\limits_{n\rightarrow\infty}\int_{\mathbb{R}^{N}}|u_{n}|^{q}dx = 0.
    $$
    Since $u_{n}$ is bounded in $H^{2}(\mathbb{R}^{N})$, passing to a subsequence if necessary, we may assume that
    $$
    \lim_{n \rightarrow \infty}\|\Delta u_{n}\|_{2}^2 = \lim_{n \rightarrow \infty}\|u_{n}\|_{4^*}^{4^*} = m \in [0,\infty).
    $$
    By Lemma \ref{cpq}, $m > 0$. Then, by the Sobolev inequality, $\mathcal{S}m^{\frac{2}{4^*}} \leq m$, implying that $m \geq \mathcal{S}^{\frac{4^*}{4^*-2}} = \mathcal{S}^{\frac{N}{4}}$. Therefore, we derive a self-contradictory inequality
    $$
    \frac{2}{N}\mathcal{S}^{\frac{N}{4}} > m_{4^*,q}(c) = \lim_{n \rightarrow \infty}E_{4^*,q}(u_n) = \frac{2}{N}m \geq \frac{2}{N}\mathcal{S}^{\frac{N}{4}}.
    $$
    Thus, the claim holds.

    Up to a subsequence, there exists $\{y_{n}^{1}\}\subset\mathbb{R}^{N}$ and $z^{1}$ such that $u_{n}(\cdot+y_{n}^{1})\rightharpoonup z^{1} \neq 0$ in $H^{2}(\mathbb{R}^{N})$, $u_{n}(\cdot+y_{n}^{1})\rightarrow z^{1}$ in $L_{loc}^{r}(\mathbb{R}^{N})$ for all $r\in(2,4^{*})$ and $u_{n}(\cdot+y_{n}^{1})\rightarrow z^{1}$ almost everywhere in $\mathbb{R}^{N}$, where $z^{1}\in B(c)=\{u\in H^{2}(\mathbb{R}^{N}), 0 < \|u\|_{2}^{2}\leq c\}$. Let $v_{n}^{1}:=u_{n}-z^{1}(\cdot-y_{n}^{1})$ for every $n\in\mathbb{N}^+$. Mimicking the proof of Lemma \ref{wcs}, we can prove that $\|z^{1}\|_{2}^{2}=c$, $Q_{4^*,q}(v_{n}^{1})\rightarrow0$, $E_{4^*,q}(v_{n}^{1})\rightarrow0$ and $\|v_{n}^{1}\|_{2}^{2}\rightarrow0$. By Lemma \ref{pll}, we have
    $$
    \lim\limits_{n\rightarrow\infty}\int_{\mathbb{R}^{N}}|v_{n}^{1}|^{q}dx = 0.
    $$
    Then,
    $$
    \|\Delta v_{n}^{1}\|_{2}^2 - \|v_{n}^{1}\|_{4^*}^{4^*} = Q_{4^*,q}(v_{n}^{1}) + \mu\gamma_{q}\|v_{n}^{1}\|_{q}^{q} \rightarrow0,
    $$
    $$
    \frac{1}{2}\|\Delta v_{n}^{1}\|_{2}^2 - \frac{1}{4^*}\|v_{n}^{1}\|_{4^*}^{4^*} = E_{4^*,q}(v_{n}^{1}) + \frac{\mu}{q}\gamma_{q}\|v_{n}^{1}\|_{q}^{q} \rightarrow0,
    $$
    imply that $\|\Delta v_{n}^{1}\|_{2}^2 \rightarrow0, \|v_{n}^{1}\|_{4^*}^{4^*}\rightarrow0$. Thus, $u_{n}(\cdot+y_{n}^{1})\rightarrow z^{1}$ strongly in $H^{2}(\mathbb{R}^{N})$, and we complete the proof.
\end{proof}

\section{Non-existence result when $q = 2+\frac{8}{N}$, $\mu c^{\frac{4}{N}} \geq \frac{N+4}{NC^q_{N,q}}$} \label{non-existence}

In this section, we write $E_{p,q}^\mu, Q_{p,q}^\mu, \mathcal{Q}_{p,q}^\mu(c), m_{p,q}^\mu(c)$ in place of $E_{p,q}, Q_{p,q}, \mathcal{Q}_{p,q}(c), m_{p,q}(c)$.

\begin{lemma}\label{uni2}
	Let $N\geq5$, $c>0$, $\mu>0$ and $2+\frac{8}{N}= q<p\leq4^*$. We further assume that $\mu c^{\frac{4}{N}} \geq \frac{N+4}{NC^q_{N,q}}$.
	\begin{description}
		\item $(i)\ $ For any $u\in S(c)$ such that $\frac{\|\Delta u\|_{2}^2}{\|u\|_{q}^q} > \frac{N}{N+4}\mu$, there exists a unique $t_{u}\in(0,+\infty)$ for which $u_{t_{u}}\in\mathcal{Q}_{p,q}^\mu(c)$. Moreover, $t_{u}$ is the unique critical point of $E_{p,q}^\mu(u_{t})$ such that $E_{p,q}^\mu(u_{t_{u}})=\max_{t\in(0,+\infty)}E_{p,q}^\mu(u_{t}).$
		\item $(ii)\ $ For any $u\in S(c)$ such that $\frac{\|\Delta u\|_{2}^2}{\|u\|_{q}^q} \leq \frac{N}{N+4}\mu$, $u_{t}\notin\mathcal{Q}_{p,q}^\mu(c), \forall t \in (0,\infty)$. Moreover, $\mathcal{Q}_{p,q}^\mu(c) = \{u_{t_{u}}: u\in S(c), \frac{\|\Delta u\|_{2}^2}{\|u\|_{q}^q} > \frac{N}{N+4}\mu\}.$
		\item $(iii)\ \mathcal{Q}_{p,q}^\mu(c)\not=\emptyset$ if $5 \leq N \leq 8$.
		\item $(iv)\ $ If $\mathcal{Q}_{p,q}^\mu(c)\not=\emptyset$, then $m_{p,q}^\mu(c)=0$.
	\end{description}	
\end{lemma}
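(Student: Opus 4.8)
The plan is to reduce all four assertions to the behaviour of the one-variable function $t\mapsto Q_{p,q}^\mu(u_t)$ along the fibre $u_t(x)=t^{N/4}u(\sqrt t x)$. The decisive feature of the mass-critical exponent $q=2+\frac8N$ is that $\frac{N(q-2)}{4}=2$ and $\mu\gamma_q=\frac{N}{N+4}\mu$, so the kinetic term and the $q$-term scale identically and combine:
\[
Q_{p,q}^\mu(u_t)=A_u\,t^2-\gamma_p\|u\|_p^p\,t^{\frac{N(p-2)}{4}},\qquad A_u:=\|\Delta u\|_2^2-\mu\gamma_q\|u\|_q^q,
\]
with $\frac{N(p-2)}{4}>2$ because $p>q$. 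The ratio condition separating $(i)$ from $(ii)$ is precisely the sign of $A_u$, since $\frac{\|\Delta u\|_2^2}{\|u\|_q^q}>\frac{N}{N+4}\mu\Leftrightarrow A_u>0$.

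For $(i)$, assuming $A_u>0$, I would factor $Q_{p,q}^\mu(u_t)=t^2\big(A_u-\gamma_p\|u\|_p^p\,t^{\frac{N(p-2)}{4}-2}\big)$; the bracket decreases strictly from $A_u>0$ to $-\infty$, giving the unique zero $t_u=\big(A_u/(\gamma_p\|u\|_p^p)\big)^{4/(N(p-2)-8)}$. Using $\frac{d}{dt}E_{p,q}^\mu(u_t)=t^{-1}Q_{p,q}^\mu(u_t)$ as in Lemma \ref{uni}, $E_{p,q}^\mu(u_t)$ increases on $(0,t_u)$ and decreases afterwards, so $t_u$ is the unique critical point and the global maximiser. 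For $(ii)$, if $A_u\le0$ then both terms of $Q_{p,q}^\mu(u_t)$ are $\le0$ with the $p$-term strictly negative, so $Q_{p,q}^\mu(u_t)<0$ for all $t>0$ and no fibre point lies in $\mathcal{Q}_{p,q}^\mu(c)$. The characterisation follows by testing $t=1$: any $w\in\mathcal{Q}_{p,q}^\mu(c)$ satisfies $A_w=\gamma_p\|w\|_p^p>0$, hence $w$ obeys the hypothesis of $(i)$ and $t_w=1$, while the reverse inclusion is $(i)$ itself.

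For $(iii)$, by the characterisation in $(ii)$ it suffices to produce one $u\in S(c)$ with $A_u>0$. I would use the $L^2$-normalised biharmonic bubbles $v_\epsilon$ of Lemma \ref{ub}, for which $\|\Delta v_\epsilon\|_2^2=\mathcal{S}^{N/4}+O(\epsilon^{N-4})$ is bounded and bounded away from $0$, whereas the (sharp, two-sided) asymptotics of the Appendix give $\|v_\epsilon\|_q^q\to0$ as $\epsilon\to0$ exactly when $5\le N\le8$ (the $\epsilon$-powers, and the negative log-power at $N=8$, all tend to $0$; for $N>8$ the corresponding quantity stays bounded below, which is what forces the dimensional restriction). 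Hence $\|\Delta v_\epsilon\|_2^2/\|v_\epsilon\|_q^q\to+\infty$, so $A_{v_\epsilon}>0$ for $\epsilon$ small and $\mathcal{Q}_{p,q}^\mu(c)\neq\emptyset$. For $(iv)$, I first note that on $\mathcal{Q}_{p,q}^\mu(c)$ the constraint $Q_{p,q}^\mu(u)=0$, i.e. $A_u=\gamma_p\|u\|_p^p$, reduces the energy to $E_{p,q}^\mu(u)=\big(\frac{\gamma_p}{2}-\frac1p\big)\|u\|_p^p$ with $\frac{\gamma_p}{2}-\frac1p>0$ since $p>2+\frac8N$; thus $E_{p,q}^\mu>0$ on $\mathcal{Q}_{p,q}^\mu(c)$ and $m_{p,q}^\mu(c)\ge0$. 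To see the infimum is $0$, I substitute $t_w$ into $E_{p,q}^\mu(w_{t_w})=\big(\frac{\gamma_p}{2}-\frac1p\big)t_w^{N(p-2)/4}\|w\|_p^p$ to get the scale-invariant expression $E_{p,q}^\mu(w_{t_w})=\kappa\,A_w^{\beta}/(\|w\|_p^p)^{\beta-1}$ with $\kappa>0$ and $\beta=\frac{N(p-2)}{N(p-2)-8}>1$, which tends to $0$ along any sequence with $A_{w_n}\to0^+$ and $\|w_n\|_p$ bounded below. Such a sequence exists: take $w_0$ with $A_{w_0}>0$ (available since $\mathcal{Q}_{p,q}^\mu(c)\neq\emptyset$) and the Gagliardo--Nirenberg extremal $Q$, which under $\mu c^{4/N}\ge\frac{N+4}{NC^q_{N,q}}$ satisfies $A_Q=\|Q\|_q^q\big(\tfrac{1}{C^q_{N,q}c^{4/N}}-\tfrac{N}{N+4}\mu\big)\le0$; joining them by a continuous path of nonzero functions in $S(c)$ and applying the intermediate value theorem yields $w_n$ with $A_{w_n}\to0^+$ while $\|w_n\|_p$ stays bounded below. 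Hence $m_{p,q}^\mu(c)=0$.

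The main obstacle is part $(iii)$: the delicate two-sided asymptotics of $\|v_\epsilon\|_q^q$ supplied by the Appendix, whose degeneration exactly at $N=8$ is what imposes $5\le N\le8$. A secondary point, in $(iv)$, is ensuring that $\|w_n\|_p$ does not collapse as $A_{w_n}\to0^+$; this is guaranteed by keeping the interpolating path inside a fixed compact set of nonzero profiles, on which $\|\cdot\|_p$ is continuous and strictly positive.
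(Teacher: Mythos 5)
Your proposal is correct and follows essentially the same route as the paper: the same factorization $Q_{p,q}^\mu(u_t)=t^2\bigl(A_u-\gamma_p\|u\|_p^p t^{\frac{N(p-2)}{4}-2}\bigr)$ for (i)--(ii), the same bubbles $v_\epsilon$ with the Appendix asymptotics for (iii), and for (iv) the same scheme of a normalized interpolation between a function with $A_u>0$ and the Gagliardo--Nirenberg extremal (where $A\leq 0$ under $\mu c^{4/N}\geq\frac{N+4}{NC^q_{N,q}}$), producing a sequence with $A_{w_n}\to 0^+$ whose projections onto $\mathcal{Q}_{p,q}^\mu(c)$ have vanishing energy. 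The only cosmetic difference is that you package the paper's two claims ($t_n\to 0$, hence $E\to 0$) into the single scale-invariant formula $E_{p,q}^\mu(w_{t_w})=\kappa\,A_w^{\beta}/(\|w\|_p^p)^{\beta-1}$, which is an equivalent computation.
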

\begin{proof}
	First we let
	$$
	Q_{p,q}^\mu(u_{t})=t^{2}\left( \|\Delta u\|_{2}^{2}-\frac{N}{N+4}\mu\|u\|_{q}^{q} - \gamma_{p}t^{\frac{N(p-2)}{4}-2}\|u\|_{p}^{p}\right).
	$$
	When $\frac{\|\Delta u\|_{2}^2}{\|u\|_{q}^q} > \frac{N}{N+4}\mu$, we complete the proof of $(i)$ similar to the one of Lemma \ref{uni}. When $\frac{\|\Delta u\|_{2}^2}{\|u\|_{q}^q} \leq \frac{N}{N+4}\mu$, it is easy to see that $Q_{p,q}^\mu(u_{t}) < 0$ for all $t > 0$, and we  complete the proof of $(ii)$.
	
	When $5 \leq N \leq 8$, for all $\mu > 0$, we can always choose $u \in S(c)$ such that $\frac{\|\Delta u\|_{2}^2}{\|u\|_{q}^q} > \frac{N}{N+4}\mu$. Indeed, we consider $v_\epsilon$ given in the proof of Lemma \ref{ub}. The proof of Lemma \ref{ub} shows that $\|v_{\epsilon}\|_{2}^{2} = c$, $\|v_{\epsilon}\|_{q}^{q} \rightarrow 0$ if $5 \leq N \leq 8$ and $\|\Delta v_{\epsilon}\|_{2}^{2} \rightarrow \mathcal{S}^{\frac{N}{4}}$ as $\epsilon \rightarrow 0$. Thus $v_\epsilon$ satisfies our condition for $\epsilon$ small enough. 
	Then we know that $\mathcal{Q}_{p,q}^\mu(c)\not=\emptyset$.\vspace{2em}

\vspace{2em}Finally, for $\mu > 0$ such that $\mathcal{Q}_{p,q}^\mu(c)\not=\emptyset$, we can choose $u \in S(c)$ such that $\frac{\|\Delta u\|_{2}^2}{\|u\|_{q}^q} > \frac{N}{N+4}\mu$. Noticing that
	\begin{equation}
	E_{p,q}^\mu(u)= (\frac{\gamma_{p}}{2}-\frac{1}{p})\|u\|_{p}^{p}, \forall u \in \mathcal{Q}_{p,q}^\mu(c),
	\end{equation}
	we obtain that $m_{p,q}^\mu(c)\geq0$ immediately. We will show that $m_{p,q}^\mu(c)\leq0$ to complete the proof.

	Claim 1: There exists $\{w_n\} \subset S(c)$ such that $w_n \rightarrow w_\infty$ in $H^2$, $\frac{\|\Delta w_n\|_{2}^2}{\|w_n\|_{q}^q} > \frac{N}{N+4}\mu$ and $\frac{\|\Delta w_\infty\|_{2}^2}{\|w_\infty\|_{q}^q} = \frac{N}{N+4}\mu$.
	
	By the proof of $(iii)$, we can take $\phi_1 \in S(c)$ such that $\frac{\|\Delta \phi_1\|_{2}^2}{\|\phi_1\|_{q}^q} > \frac{N}{N+4}\mu$. Furthermore, we can take a minimizer $\phi_2$ of the Gagliardo-Nirenberg inequality \eqref{eq05} such that $\|\phi_2\|_{2}^2 = c$. Since $\mu c^{\frac{4}{N}} \geq \frac{N+4}{NC^q_{N,q}}$, we have
	$$
	\|\Delta \phi_2\|_{2}^2 = \frac{1}{C_{N,q}^qc^{\frac{4}{N}}}\|\phi_2\|_{q}^q \leq \frac{N}{N+4}\mu\|\phi_2\|_{q}^q.
	$$
	Obviously, $\phi_1 \neq -\phi_2$. Hence, we can consider
	$$
	\rho_\tau = \sqrt{c}\frac{\tau\phi_1 + (1-\tau)\phi_2}{\|\tau\phi_1 + (1-\tau)\phi_2\|_2} \in S_c, \tau \in [0,1].
	$$
	By the continuity of $\rho_\tau$ in $H^2$, there exists $\tau_0 \in [0,1)$ such that $\|\Delta \rho_{\tau_0}\|_{2}^2 = \frac{N}{N+4}\mu\|\rho_{\tau_0}\|_{q}^q$, $\|\Delta \rho_{\tau}\|_{2}^2 > \frac{N}{N+4}\mu\|\rho_{\tau}\|_{q}^q$ for all $\tau > \tau_0$. Take $w_\infty = \rho_{\tau_0}$, $w_n = \rho_{\tau_n}$ with $\tau_n > \tau_0$, $\tau_n \rightarrow \tau_0$. Thus the Claim 1 is proved.
	
	Claim 2: $t_n = t_{w_n} \rightarrow 0$ where $t_{w_n}$ is given by $(i)$.
	
	By the proof of $(i)$, we have
	$$
	t_n^{\frac{N(p-2)}{4}-2} = \frac{\|\Delta w_n\|_{2}^{2}-\frac{N}{N+4}\mu\|w_n\|_{q}^{q}}{\gamma_{p}\|w_n \|_{p}^{p}} \rightarrow \frac{\|\Delta w_\infty\|_{2}^{2}-\frac{N}{N+4}\mu\|w_\infty\|_{q}^{q}}{\gamma_{p}\|w_\infty\|_{p}^{p}} = 0.
	$$
	Since $\frac{N(p-2)}{4}-2 > 0$, we complete the proof of Claim 2.
	
	Claims 1, 2 yield that $E_{p,q}^\mu((w_n)_{t_n}) \rightarrow 0$. Noticing that $(w_n)_{t_n} \in \mathcal{Q}_{p,q}^\mu(c)$, we get
	$$
	m_{p,q}^\mu(c) = \inf_{u \in \mathcal{Q}_{p,q}^\mu(c)}E_{p,q}^\mu(u) \leq \lim_{n \rightarrow \infty}E_{p,q}^\mu((w_n)_{t_n}) = 0.
	$$
	The proof is complete.
\end{proof}	

\begin{proof}[Proof of Theorem \ref{Thm1.3}]
	Suppose the contrary that for $\mu c^{\frac{4}{N}} \geq \frac{N+4}{NC^q_{N,q}}$, $m_{p,q}^\mu(c)$ is attained by some $u_{\mu,c}$. By Lemma \ref{uni2} $(ii)$, either $u_{\mu,c} \in \mathcal{Q}_{p,q}^\mu(c)$ or $\|\Delta u_{\mu,c}\|_{2}^2=\frac{N}{N+4}\mu\|u_{\mu,c}\|_{q}^q$, $\| u_{\mu,c}\|_{2}^2 = c$. If $u_{\mu,c} \in \mathcal{Q}_{p,q}^\mu(c)$, then
	$$
	0 = m_{p,q}^\mu(c) = E_{p,q}^\mu(u_{\mu,c}) = (\frac{\gamma_{p}}{q\gamma_{q}}-\frac{1}{p})\|u_{\mu,c}\|_{p}^{p},
	$$
	in a contradiction with $u_{\mu,c} \in S_c$. If $\|\Delta u_{\mu,c}\|_{2}^2=\frac{N}{N+4}\mu\|u_{\mu,c}\|_{q}^q$, then
    $$
    0 = m_{p,q}^\mu(c) = E_{p,q}^\mu(u_{\mu,c}) = -\frac{1}{p}\|u_{p,q}\|_{p}^{p},
    $$
    which is impossible since $\| u_{\mu,c}\|_{2}^2 = c$.
\end{proof}
\color{black}

\section{Instability} \label{insta}
In this section, we consider the instability of standing waves associated with the ground states. We will consider our problem in $H^{2}_{rad}(\mathbb{R}^{N})$. In fact, we need to use the local virial identity to prove the instability by blowup and it is not clear how to control errors term without radial symmetry.

\begin{remark}
 For now, we haven't removed the radial symmetry assumption. The strategy here was introduced in \cite{OGA}, where Ogawa and Tsutsumi showed blowup for radial solutions for mass supercritical NLS with radial $u_{0}\in H^1(\mathbb{R}^{N})$ with infinite variance ($i.e.$ we may not have $xu_{0}\not\in L^2(\mathbb{R}^{N})$). We requires a careful analysis of the time evolution for the localized virial identity $M_{\varphi_{R}}[u(t)]$,  which is essential in the proof of blowup result.
\end{remark}
 Firstly, from \cite{PAU} we recall the local well-posedness of the Cauchy problem \eqref{eq01} holds for $2+\frac{8}{N}\leq q<p\leq4^*$.

\begin{proposition}\label{lwp}
Let $2+\frac{8}{N}\leq q<p\leq4^*$. For any $u_{0}\in H^{2}(\mathbb{R}^{N})$, there exists $T>0$ and a unique solution $u(t)\in C([0,T);H^{2}(\mathbb{R}^{N}))$ to \eqref{eq01}, which satisfies the conservation laws along the time, that is, for any $t\in[0,T)$
$$\|u(t)\|_{2}=\|u_{0}\|_{2},\ E_{p,q}(u(t))=E_{p,q}(u_{0}).$$
Moreover, for the Sobolev subcritical case, we have the following blowup alternative: either $T=\infty$, or $\lim_{t\rightarrow T}\|\Delta u(t)\|_{2}=\infty$. For the Sobolev critical case $p=4^*$, we have a blowup alternative that involves a critical Strichartz norm in space time, $i.e.$ it holds that $0<T<\infty$ and
\begin{equation}\label{eq01060}
\int_{0}^{T}\int_{\mathbb{R}^{N}}|u(t,x)|^{\frac{2(N+4)}{N-4}}dxdt=+\infty.	
\end{equation}
\end{proposition}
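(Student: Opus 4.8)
The plan is to recast \eqref{eq01} in its Duhamel form and solve it by a fixed-point argument based on the Strichartz estimates for the biharmonic propagator $e^{-it\Delta^2}$. Writing
$$\psi(t)=e^{-it\Delta^2}\psi_0+i\int_0^t e^{-i(t-s)\Delta^2}\big(\mu|\psi|^{q-2}\psi+|\psi|^{p-2}\psi\big)(s)\,ds,$$
I would look for a fixed point of the associated map $\Phi$ inside a ball of a space-time space $X_T:=C([0,T];H^2)\cap L^\gamma([0,T];W^{2,\rho})$ for suitably chosen biharmonic-admissible pairs $(\gamma,\rho)$. The first ingredient is the dispersive decay $\|e^{-it\Delta^2}\|_{L^1\to L^\infty}\lesssim|t|^{-N/4}$ and the homogeneous and retarded Strichartz estimates of Ben-Artzi et al.\ \cite{Hbe}, valid for pairs satisfying $\tfrac{4}{\gamma}+\tfrac{N}{\rho}=\tfrac{N}{2}$. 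Since $\Delta^2$ commutes with $\Delta$, applying these estimates to $\psi$ and to $\Delta\psi$ upgrades them to $H^2$-level bounds that control two derivatives of the solution.

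Next I would prove the nonlinear estimates. For $r\in\{q,p\}$ one estimates $|\psi|^{r-2}\psi$ in a dual Strichartz norm using H\"older in space and time together with the Sobolev embeddings $H^2(\mathbb{R}^N)\hookrightarrow L^s(\mathbb{R}^N)$, which hold for $2\le s\le 4^*$ precisely because $N\ge5$ and $2+\tfrac{8}{N}\le q<p\le 4^*$. As the nonlinearity is not smooth, a fractional Leibniz (Kato--Ponce) estimate is needed to distribute the two derivatives, reducing matters to products of $\psi$ and $\Delta\psi$ in Lebesgue norms that are controlled by the chosen Strichartz norms. In the Sobolev-subcritical range $p<4^*$ the time exponents can be selected so that a positive power $T^\theta$ is gained; this makes $\Phi$ a contraction on a small ball for $T=T(\|\psi_0\|_{H^2})$ small enough, giving a unique local solution whose existence time depends only on $\|\psi_0\|_{H^2}$.

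The delicate part is the energy-critical case $p=4^*$, where the top-order nonlinear estimate is scaling invariant and no power of $T$ is available. Here I would instead close the contraction using the smallness of $\|e^{-it\Delta^2}\psi_0\|$ in the critical space-time norm $L^{2(N+4)/(N-4)}_{t,x}$ over a short interval, which follows from the density of Schwartz functions and the Strichartz bound. Consequently the existence time depends on the \emph{profile} of $\psi_0$ and not merely on its $H^2$ norm, which is exactly why the continuation criterion takes the form \eqref{eq01060}: if this critical integral is finite on $[0,T)$ the solution extends past $T$, so a finite-time singularity forces its divergence. The subcritical blow-up alternative, by contrast, follows by iterating the local theory, since the existence time is bounded below by a function of $\|\Delta u(t)\|_2$.

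Finally, the conservation laws are obtained formally by pairing \eqref{eq01} with $\bar\psi$ (for the mass) and with $\partial_t\bar\psi$ (for the energy), and are made rigorous by regularizing the initial data and passing to the limit via the local theory just established. The main obstacle is therefore not the subcritical regime, which is routine once the Strichartz framework and the fractional Leibniz estimates are in place, but the construction of the perturbation, stability, and continuation theory in the scaling-invariant critical norm when $p=4^*$; this is the content imported from Miao et al.\ \cite{MIA} and Pausader \cite{PAU}, and it is what ultimately yields the critical-norm blow-up alternative \eqref{eq01060}.
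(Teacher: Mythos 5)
The paper does not prove Proposition \ref{lwp} at all: it is simply recalled from Pausader \cite{PAU} (with the critical-norm continuation criterion going back to \cite{MIA}), so there is no internal argument to compare against. Your sketch is a correct reconstruction of precisely the approach those cited works use --- Duhamel's formula plus biharmonic Strichartz estimates for admissible pairs $\frac{4}{\gamma}+\frac{N}{\rho}=\frac{N}{2}$, a contraction with a $T^{\theta}$ gain in the subcritical range (whence the $\|\Delta u(t)\|_{2}$ blow-up alternative), smallness of the free evolution in $L^{2(N+4)/(N-4)}_{t,x}$ in the critical case $p=4^*$ (whence the continuation criterion \eqref{eq01060}), and conservation laws by regularization --- so it matches the proof the paper implicitly relies on.
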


Next we recall the localized virial to \eqref{eq01} introduced in \cite{BOU}, which will play an important role to deduce the occurrence of blowup. Let $\varphi:\mathbb{R}^{N}\rightarrow\mathbb{R}$ be a radial function such that $D^i(\varphi)\in L^{\infty}(\mathbb{R}^{N})$, $1\leq i\leq6$,
\[\varphi:=
\begin{cases}
   \frac{r^{2}}{2}, \qquad\qquad r\leq1\\
    const\qquad\quad r\geq10,\\
    \end{cases}\]
and $\varphi''(r)\leq1$ for all $r\geq0$. We set $\varphi_{R}(r)=R^2\varphi(\frac{r}{R})$ for $R>0$. Then we record some properties of $\varphi_{R}$ which will be often used in the following.
\begin{lemma}\label{bzy}
For all $r\geq0$, we have
\begin{description}
\item $(i)\ 1-\varphi''_{R}(r)\geq0,\quad1-\frac{\varphi'_{R}(r)}{r}\geq0,$
\item $(ii)\ N-\Delta\varphi_{R}(r)\geq0.$
\end{description}
\end{lemma}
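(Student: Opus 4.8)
The plan is to reduce everything to two elementary facts about the profile $\varphi$: that $\varphi''\le 1$ by hypothesis, and that $\varphi'(s)\le s$ for all $s\ge 0$, which I will derive. First I would record how the rescaling $\varphi_{R}(r)=R^{2}\varphi(r/R)$ acts on the derivatives. A direct computation gives $\varphi_{R}'(r)=R\,\varphi'(r/R)$ and $\varphi_{R}''(r)=\varphi''(r/R)$, so that $\varphi_{R}'(r)/r=\varphi'(r/R)/(r/R)$. Thus both quantities appearing in the lemma depend on $r$ only through $s:=r/R$, and it suffices to prove the inequalities for the fixed profile $\varphi$ itself.

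The first inequality in $(i)$ is then immediate: $1-\varphi_{R}''(r)=1-\varphi''(r/R)\ge 0$, since $\varphi''\le 1$ everywhere by assumption. For the second inequality I would use that $\varphi(r)=r^{2}/2$ near the origin, so $\varphi'(0)=0$, and integrate the bound $\varphi''\le 1$:
$$\varphi'(s)=\varphi'(0)+\int_{0}^{s}\varphi''(t)\,dt\le\int_{0}^{s}1\,dt=s,\qquad s\ge 0.$$
Dividing by $s>0$ gives $\varphi'(s)/s\le 1$, i.e. $1-\varphi_{R}'(r)/r\ge 0$ for $r>0$; at $r=0$ the quotient is understood as the limit $\varphi''(0)=1$, so the inequality persists by continuity.

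For $(ii)$ I would invoke the formula for the radial Laplacian, $\Delta\varphi_{R}(r)=\varphi_{R}''(r)+\tfrac{N-1}{r}\varphi_{R}'(r)$, and rewrite
$$N-\Delta\varphi_{R}(r)=\bigl(1-\varphi_{R}''(r)\bigr)+(N-1)\Bigl(1-\frac{\varphi_{R}'(r)}{r}\Bigr).$$
Since $N\ge 5$ ensures $N-1>0$, and both parenthesized terms are nonnegative by part $(i)$, the right-hand side is nonnegative, which is exactly the claim.

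There is no genuine obstacle here: the statement is a routine consequence of the defining properties of $\varphi$. The only points that require a little care are the behavior at $r=0$—where one uses $\varphi'(0)=0$, coming from $\varphi(r)=r^{2}/2$ on $[0,1]$, and interprets $\varphi_{R}'(r)/r$ by continuity—and the observation, noted above, that by scale invariance it is enough to verify the inequalities for $\varphi$ rather than for each $\varphi_{R}$ separately.
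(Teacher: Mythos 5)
Your proof is correct and follows essentially the same route as the paper: both arguments integrate the bound $\varphi''\le 1$ from $0$ (using $\varphi'(0)=0$) to get $\varphi'_R(r)\le r$, and both establish $(ii)$ via the identical decomposition $N-\Delta\varphi_{R}=\bigl(1-\varphi''_{R}\bigr)+(N-1)\bigl(1-\varphi'_{R}/r\bigr)$. The only cosmetic difference is that you first reduce to the unscaled profile $\varphi$ by the scaling identities, whereas the paper works with $\varphi_{R}$ directly; your extra care about the limit at $r=0$ is a minor refinement, not a change of method.
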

\begin{proof}
$(i)\ $ By the definition of $\varphi_{R}$, we have $\varphi_{R}'(r)=R\varphi'(\frac{r}{R})$ and $\varphi_{R}''(r)=\varphi''(\frac{r}{R})$. Since $\varphi''(r)\leq1$ for $r\geq0$, we get $\varphi_{R}''(r)\leq1$. Then integrating differential inequality $\varphi_{R}''\leq1$ on $[0,r]$,
 we have $\int_{0}^{r}\varphi_{R}''(s)ds\leq r$. It follows that $\varphi_{R}'(r)\leq r$, then we get our conclusion.

$(ii)\ $ By simple calculations, we can get $\Delta\varphi_{R}(r)=\varphi_{R}''(r)+\varphi_{R}'(r)\frac{N-1}{r}$. Hence, according to $(i)$, it follows that
\begin{align*}
N-\Delta\varphi_{R}(r)&=N-\varphi_{R}''(r)-\varphi_{R}'(r)\frac{N-1}{r}\\
&=1-\varphi_{R}''(r)+N-1-\varphi_{R}'(r)\frac{N-1}{r}\\
&=1-\varphi_{R}''(r)+(N-1)(1-\frac{\varphi_{R}'(r)}{r})\geq0\\
\end{align*}
 \end{proof}

\begin{lemma}\label{bzy2}
We have
\begin{description}
\item $(i)$ \[\nabla\varphi_{R}(r)=R\varphi'_{R}(\frac{r}{R})\frac{x}{|x|}=
\begin{cases}
x,\qquad r\leq R\\
0,\qquad r\geq10R,\\	
\end{cases}\]
\item $(ii)$
$$\|\nabla^{j}\varphi_{R}\|_{L^{\infty}}\leq R^{2-j},\quad\mathrm{for}\ 0\leq j\leq6,$$
\item $(iii)$
\[\mathrm{supp}(\nabla^{j}\varphi_{R})\subset
\begin{cases}
\{|x|\leq10R\},\qquad\qquad\mathrm{for}\ j=1,2\\
\{R\leq|x|\leq10R\},\qquad\mathrm{for}\ 3\leq j\leq6.\\	
\end{cases}\]
\end{description}
\end{lemma}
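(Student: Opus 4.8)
The plan is to derive all three assertions from the single scaling identity
$$\nabla^{j}\varphi_{R}(x)=R^{2-j}\,(\nabla^{j}\varphi)\!\left(\frac{x}{R}\right),\qquad 0\le j\le 6,$$
which I would obtain by differentiating $\varphi_{R}(x)=R^{2}\varphi(x/R)$ exactly $j$ times, each application of the chain rule contributing one factor $R^{-1}$ so that the prefactor $R^{2}$ becomes $R^{2-j}$. Once this identity is available, each part reduces to reading off the behaviour of the fixed profile $\varphi$ on the two regions where it is given explicitly, namely $\varphi(s)=s^{2}/2$ for $s\le 1$ and $\varphi\equiv\mathrm{const}$ for $s\ge 10$.

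For part $(i)$, I would use the radial form $\nabla\varphi_{R}(x)=\varphi_{R}'(|x|)\frac{x}{|x|}$ together with $\varphi_{R}'(r)=R\varphi'(r/R)$. On $\{|x|\le R\}$ one has $|x|/R\le 1$, hence $\varphi'(|x|/R)=|x|/R$ and therefore $\nabla\varphi_{R}(x)=R\cdot\frac{|x|}{R}\cdot\frac{x}{|x|}=x$; on $\{|x|\ge 10R\}$ one has $|x|/R\ge 10$, hence $\varphi'(|x|/R)=0$ and $\nabla\varphi_{R}(x)=0$. This yields both stated boundary values.

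For part $(ii)$, the scaling identity gives $\|\nabla^{j}\varphi_{R}\|_{L^{\infty}}=R^{2-j}\|\nabla^{j}\varphi\|_{L^{\infty}}$, and since $\varphi$ is bounded (being quadratic near the origin and constant at infinity) and $D^{i}\varphi\in L^{\infty}(\mathbb{R}^{N})$ for $1\le i\le 6$ by hypothesis, each norm $\|\nabla^{j}\varphi\|_{L^{\infty}}$ is a finite constant depending only on $\varphi$, which delivers the bound $R^{2-j}$. For part $(iii)$, the scaling identity shows $\mathrm{supp}(\nabla^{j}\varphi_{R})=R\cdot\mathrm{supp}(\nabla^{j}\varphi)$. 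Because $\varphi$ is constant on $\{s\ge 10\}$, every derivative of order $j\ge 1$ vanishes there, so $\mathrm{supp}(\nabla^{j}\varphi)\subset\{|y|\le 10\}$, which rescales to $\{|x|\le 10R\}$ and settles $j=1,2$. For $3\le j\le 6$ I would add the observation that on $\{s\le 1\}$ one has $\varphi(x)=|x|^{2}/2$, whose Cartesian derivatives of order $\ge 3$ vanish identically, since $\partial_{i}\partial_{k}\partial_{l}\big(\tfrac12\sum_{m}x_{m}^{2}\big)=0$; hence $\nabla^{j}\varphi\equiv 0$ on $\{|y|\le 1\}$ as well, giving $\mathrm{supp}(\nabla^{j}\varphi)\subset\{1\le|y|\le 10\}$, which rescales to $\{R\le|x|\le 10R\}$.

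As this is essentially a scaling-and-support bookkeeping statement, I do not anticipate a genuine obstacle. The only point requiring care is to perform the repeated differentiation of the composite $\varphi(x/R)$ in Cartesian coordinates rather than radial ones, so that the powers of $R$ from the chain rule are tracked correctly and the vanishing of the third- and higher-order derivatives of $|x|^{2}/2$ is made transparent.
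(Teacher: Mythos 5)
Your proof is correct and is precisely the verification the paper leaves to the reader (its own proof of this lemma is just ``These properties can be checked easily from the definition of $\varphi$''): the scaling identity $\nabla^{j}\varphi_{R}(x)=R^{2-j}(\nabla^{j}\varphi)(x/R)$ combined with the explicit form of $\varphi$ on $\{s\le 1\}$ and $\{s\ge 10\}$, and the vanishing of third- and higher-order derivatives of $|y|^{2}/2$, is the intended argument. One small caveat on $(ii)$: your identity gives $\|\nabla^{j}\varphi_{R}\|_{L^{\infty}}=R^{2-j}\|\nabla^{j}\varphi\|_{L^{\infty}}$, i.e.\ the stated bound only up to a constant depending on $\varphi$ (so really $\lesssim R^{2-j}$ unless one normalizes $\varphi$ so that $\|\nabla^{j}\varphi\|_{L^{\infty}}\le 1$); this is harmless, since every later use of the lemma in the paper is through $\lesssim$ and $O(\cdot)$ estimates.
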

\begin{proof}
These properties can be checked easily from the definition of $\varphi$.
\end{proof}
For $u\in H^{2}(\mathbb{R}^{N})$, we define the localized virial $M_{\varphi_{R}}$ of $u$ by
\begin{equation}\label{eq6001}
M_{\varphi_{R}}[u]:=\big<u,-i(\nabla\varphi_{R}\cdot\nabla+\nabla\cdot\nabla\varphi_{R})u\big>=2Im\int_{\mathbb{R}^{N}}\overline{u}\nabla\varphi_{R}\cdot\nabla udx	.
\end{equation}
By Cauchy-Schwartz inequality and H\"older inequality, we have $|M_{\varphi_{R}}[u]|\lesssim
R\|u\|^{\frac{3}{2}}_{2}\|\Delta u\|^{\frac{1}{2}}_{2}$. In particular, the localized virial is well-defined in $H^{2}(\mathbb{R}^{N})$.

The following lemma reveals a key point on the evolution of this quantity.
\begin{lemma}\label{vir}
Let $2+\frac{8}{N}\leq q<p\leq4^*$ and $R>0$. Suppose that $u\in C([0,T), H^{2}_{rad}(\mathbb{R}^{N}))$ is a solution to \eqref{eq01}. Then for any $t\in[0,T)$, we have the following inequality:
\begin{align*}
\frac{d}{dt}M_{\varphi_{R}}[u(t)]&\leq2N(q-2)E_{p,q}(u)-(N(q-2)-8)\int_{\mathbb{R}^{N}}|\Delta u(t)|^{2}-\frac{2N(p-q)}{p}\int_{\mathbb{R}^{N}}|u(t)|^{p}\\
&+O(R^{-4}+R^{-2}\|\Delta u(t)\|_{2}+R^{-\frac{(q-2)(N-1)}{2}}\|\Delta u\|_{2}^{\frac{q-2}{4}}+R^{-\frac{(p-2)(N-1)}{2}}\|\Delta u\|_{2}^{\frac{p-2}{4}})\\
&=8Q_{p,q}(u(t))+O(R^{-4}+R^{-2}\|\Delta u(t)\|_{2}+R^{-\frac{(q-2)(N-1)}{2}}\|\Delta u\|_{2}^{\frac{q-2}{4}}+R^{-\frac{(p-2)(N-1)}{2}}\|\Delta u\|_{2}^{\frac{p-2}{4}}). \\	
\end{align*}
\end{lemma}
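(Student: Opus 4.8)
The plan is to differentiate the localized virial $M_{\varphi_R}[u(t)]$ directly along the flow of \eqref{eq01} and then split the resulting expression into an exact ``quadratic--weight'' part that reproduces $8Q_{p,q}(u(t))$ and a collection of remainder terms supported where $\varphi_R$ departs from $\tfrac{|x|^2}{2}$. Writing $\Gamma_{\varphi_R}=-i(\nabla\varphi_R\cdot\nabla+\nabla\cdot\nabla\varphi_R)$ so that $M_{\varphi_R}[u]=\langle u,\Gamma_{\varphi_R}u\rangle$, I would use $i\partial_t u=\Delta^2 u-\mu|u|^{q-2}u-|u|^{p-2}u$ to obtain
$$\frac{d}{dt}M_{\varphi_R}[u(t)]=\big\langle u,i[\Delta^2,\Gamma_{\varphi_R}]u\big\rangle+(\text{nonlinear terms}),$$
and evaluate the commutator $[\Delta^2,\Gamma_{\varphi_R}]$ by repeated integration by parts. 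This produces a linear (biharmonic) contribution whose coefficients are derivatives of $\varphi_R$ of order up to six, together with nonlinear contributions whose coefficients are $\Delta\varphi_R$ and $\nabla\varphi_R$ paired against $F(u)=\tfrac{\mu}{q}|u|^q+\tfrac1p|u|^p$ and its derivative.

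The second step is to isolate the main term. On $\{|x|\le R\}$ one has $\varphi_R(x)=\tfrac{|x|^2}{2}$, hence $\partial^2_{jk}\varphi_R=\delta_{jk}$, $\Delta\varphi_R=N$, and all derivatives of order $\ge3$ vanish; there the identity collapses to the global biharmonic virial identity, whose value is exactly $8Q_{p,q}(u)$. I would therefore organize the exact identity as
$$\frac{d}{dt}M_{\varphi_R}[u(t)]=8Q_{p,q}(u(t))+\mathcal D_R[u]+\mathcal E_R[u],$$
where $\mathcal D_R[u]$ collects the deficit integrals built from the nonnegative quantities $1-\varphi_R''(r)$ and $N-\Delta\varphi_R(r)$ (Lemma \ref{bzy}); these enter with a sign that makes $\mathcal D_R[u]\le0$, so that discarding them yields the stated inequality, while $\mathcal E_R[u]$ collects the genuinely localized remainders. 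The algebraic identity $8Q_{p,q}(u)=2N(q-2)E_{p,q}(u)-(N(q-2)-8)\|\Delta u\|_2^2-\tfrac{2N(p-q)}{p}\|u\|_p^p$, which follows at once from the definitions of $\gamma_q,\gamma_p$ and $E_{p,q}$, then yields both displayed forms of the conclusion simultaneously.

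The third step is to estimate $\mathcal E_R[u]$. By Lemma \ref{bzy2}(iii) every derivative of $\varphi_R$ of order $\ge3$ is supported in the annulus $\{R\le|x|\le10R\}$, and $\|\nabla^j\varphi_R\|_{L^\infty}\le R^{2-j}$ by Lemma \ref{bzy2}(ii). The purely biharmonic remainders are then controlled using the conservation of mass $\|u\|_2^2=c$ together with the interpolation $\|\nabla u\|_2\lesssim\|u\|_2^{1/2}\|\Delta u\|_2^{1/2}$: the sixth--order term yields the $O(R^{-4})$ contribution and the cross terms carrying $\nabla^3\varphi_R,\nabla^4\varphi_R$ yield the $O(R^{-2}\|\Delta u\|_2)$ contribution. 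For the nonlinear remainders, which are supported on $\{|x|\ge R\}$, I would invoke the radial Strauss decay estimate $|u(x)|\lesssim|x|^{-\frac{N-1}{2}}\|u\|_2^{1/2}\|\nabla u\|_2^{1/2}$ valid for $u\in H^2_{rad}(\mathbb{R}^N)$; bounding $\int_{|x|\ge R}|u|^q\le\|u\|_{L^\infty(|x|\ge R)}^{q-2}\|u\|_2^2$ and combining with the interpolation above produces precisely the factor $R^{-\frac{(q-2)(N-1)}{2}}\|\Delta u\|_2^{\frac{q-2}{4}}$, and the analogous computation for the $p$--term gives $R^{-\frac{(p-2)(N-1)}{2}}\|\Delta u\|_2^{\frac{p-2}{4}}$.

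The main obstacle I anticipate lies in the linear commutator computation: unlike the classical Schr\"odinger case, $[\Delta^2,\Gamma_{\varphi_R}]$ generates many terms with coefficients up to $\nabla^6\varphi_R$ and mixed derivatives of $u$ up to third order, so one must integrate by parts with care to re-express everything in terms of $\|\Delta u\|_2^2$, sign--definite deficits, and annular remainders with the correct powers of $R$. Keeping track of the exact constants so that the leading coefficient is precisely $8$ (matching $8\|\Delta u\|_2^2$ in $8Q_{p,q}$), and checking that the discarded deficit integrals genuinely carry the nonpositive sign furnished by Lemma \ref{bzy}, is the delicate bookkeeping; by contrast, once this identity is in hand the remainder estimates are routine applications of the radial Sobolev inequality, Lemma \ref{gni}, and mass conservation.
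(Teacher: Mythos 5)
Your proposal follows essentially the same route as the paper's proof: the same commutator computation for $[\Delta^2,i\Gamma_{\varphi_R}]$, the same use of the radial Hessian formula and Lemma \ref{bzy} to discard the sign-definite second-order (kinetic) deficits, the same interpolation bounds giving the $O(R^{-4}+R^{-2}\|\Delta u\|_2)$ linear remainders, the same Strauss-inequality treatment of the nonlinear tails on $\{|x|\geq R\}$, and the same algebraic identity identifying the main term with $8Q_{p,q}(u)$. One small correction to your bookkeeping: the deficit built from $N-\Delta\varphi_R$ multiplies the focusing nonlinearities with the \emph{unfavorable} (nonnegative) sign, so it cannot be placed in $\mathcal{D}_R[u]\le 0$ and discarded---it is precisely the exterior term that your step 3 (and the paper) must estimate via the Strauss inequality, which is why those $R^{-\frac{(q-2)(N-1)}{2}}\|\Delta u\|_2^{\frac{q-2}{4}}$ and $R^{-\frac{(p-2)(N-1)}{2}}\|\Delta u\|_2^{\frac{p-2}{4}}$ error terms appear in the statement at all.
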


\begin{proof}
First, we rewrite $M_{\varphi_{R}}[u]$ as
$$M_{\varphi_{R}}[u]=\big<u(t),\Gamma_{\varphi_{R}}u(t)\big>\quad\mathrm{with}\ \Gamma_{\varphi_{R}}=-i(\nabla\varphi_{R}\cdot\nabla+\nabla\cdot\nabla\varphi_{R}).$$	
Recall that if u satisfies the equation $i\partial_{t}u=Hu$, then
$$\frac{d}{dt}\big<u,Au\big>=\big<u,[H,iA]u\big>,$$
where $[H,A]=HA-AH$ is a commutator operator. Then by taking the time derivative and using that $i\partial_{t}u$ is given in \eqref{eq01}, we get that
\begin{equation}\label{eq1301}
\frac{d}{dt}M_{\varphi_{R}}[u(t)]=A_{R}^{1}[u(t)]+B_{R}^{2}[u(t)]+B_{R}^{3}[u(t)]	
\end{equation}
with
$$A_{R}^{1}[u]=:\big<u(t),[\Delta^{2}u+i\Gamma_{\varphi_{R}}]u(t)\big>,$$
$$B_{R}^{2}[u]=:\big<u(t),[-\mu|u|^{q-2}u+i\Gamma_{\varphi_{R}}]u(t)\big>\quad\mathrm{and}\quad B_{R}^{3}[u]=:\big<u(t),[-|u|^{p-2}u+i\Gamma_{\varphi_{R}}]u(t)\big>.$$
For convenience, we state some identities to be used in the next. By using the fact that $\Delta A+A\Delta=2\partial_{k}A\partial_{k}+[\partial_{k},[\partial_{k},A]]$, we observe that
\begin{equation}\label{eq1302}
[\Delta^{2},i\Gamma_{\varphi_{R}}]=\Delta[\Delta,i\Gamma_{\varphi_{R}}]+[\Delta,i\Gamma_{\varphi_{R}}]\Delta=2\partial_{k}[\Delta,i\Gamma_{\varphi_{R}}]\partial_{k}+[\partial_{k},[\partial_{k},[\Delta,i\Gamma_{\varphi_{R}}]]].
\end{equation}
Then by simple calculations, it follows that
\begin{equation}\label{eq1303}
[\Delta,i\Gamma_{\varphi_{R}}]=[\Delta,\nabla\varphi_{R}\cdot\nabla+\nabla\cdot\nabla\varphi_{R}]=4\partial_{k}(\partial^{2}_{kl}\varphi_{R})\partial_{l}+\Delta^{2}\varphi_{R}.
\end{equation}
Combining \eqref{eq1302} and \eqref{eq1303}, we obtain the identity
\begin{equation}\label{eq1304}
[\Delta^{2},i\Gamma_{\varphi_{R}}]=8	\partial^{2}_{kl}(\partial^{2}_{lm}\varphi_{R})\partial^{2}_{mk}+4\partial_{k}(\partial^{2}_{kl}\Delta\varphi_{R})\partial_{l}+2\partial_{k}(\Delta^{2}\varphi_{R})\partial_{k}+\Delta^{3}\varphi_{R}.
\end{equation}
We recall that the Hessian of sufficiently regular and radial function $f:\mathbb{R}^N\rightarrow\mathbb{C}$ is given by
\begin{equation}\label{eq1305}
\partial^{2}_{kl}f=(\delta_{kl}-\frac{x_{k}x_{l}}{r^2})\frac{\partial_{r}f}{r}+\frac{x_{k}x_{l}}{r^2}\partial^{2}_{r}f.
\end{equation}
Applying \eqref{eq1305} to $\varphi_{R}(r)$ and $u(t,r)$, a calculation combined with integration by parts yields that
\begin{align*}
	8\big<u,8\partial^{2}_{kl}(\partial^{2}_{lm}\varphi_{R})\partial^{2}_{mk}u\big>&=8\int_{\mathbb{R}^{N}}(\partial^{2}_{kl}\overline{u})(\partial^{2}_{lm}\varphi_{R})(\partial^{2}_{mk}u)\\
	&=8\int_{\mathbb{R}^{N}}(\partial_{r}^{2}\varphi_{R}|\partial_{r}^{2}u|^{2}+\frac{d-1}{r^{2}}\frac{\partial_{r}\varphi_{R}}{r}|\partial_{r}u|^{2})\\
	&=8\int_{\mathbb{R}^{N}}|\Delta u|^{2}-(1-\partial_{r}^{2}\varphi_{R})|\partial_{r}^{2}u|^{2}-(1-\frac{\partial_{r}\varphi_{R}}{r})\frac{d-1}{r^{2}}|\partial_{r}u|^{2}.\\
\end{align*}
Noticing that $\int_{\mathbb{R}^{N}}|\Delta u|^{2}=\int_{\mathbb{R}^{N}}|\partial_{r}^{2}u|^{2}+\frac{d-1}{r^{2}}|\partial_{r}u|^{2}$ for radial  $u\in H^2(\mathbb{R}^{N})$, according to the inequalities in Lemma \ref{bzy},  we have
\begin{equation}\label{eq1306}
8\big<u,8\partial^{2}_{kl}(\partial^{2}_{lm}\varphi_{R})\partial^{2}_{mk}u\big>\leq8\int_{\mathbb{R}^{N}}|\Delta u|^{2}.
\end{equation}
For the rest terms in \eqref{eq1304}, direct arguments yield that
\begin{eqnarray}\label{eq1307}
\begin{array}{cc}
&|\big<u,\partial_{k}(\partial^{2}_{kl}\Delta\varphi_{R})\partial_{l}u\big>|\lesssim\|\partial^{2}_{kl}\Delta\varphi_{R}\|_{L^\infty}\|\nabla u\|^{2}_{2}\lesssim R^{-2}\|\nabla u\|^{2}_{2}\lesssim R^{-2}\|u\|_{2}\|\Delta u\|_{2},\\
&|\big<u,\partial_{k}(\Delta^{2}\varphi_{R})\partial_{k}u\big>|\lesssim\|\Delta^{2}\varphi_{R}\|_{L^\infty}\|\nabla u\|^{2}_{2}\lesssim R^{-2}\|\nabla u\|^{2}_{2}\lesssim R^{-2}\|u\|_{2}\|\Delta u\|_{2},\\
&|\big<u,\Delta^{3}\varphi_{R}u\big>||\lesssim\|\Delta^{3}\varphi_{R}\|_{L^\infty}\|u\|^{2}_{2}\lesssim R^{-4}\|u\|^{2}_{2}.\\
	\end{array}	
\end{eqnarray}
By \eqref{eq1306} and \eqref{eq1307}, we conclude that
\begin{equation}\label{eq1308}
A_{R}^{1}[u(t)]\leq8\int_{\mathbb{R}^{N}}|\Delta u(t)|^{2}+O(R^{-4}+R^{-2}\|\Delta u(t)\|_{2}).	
\end{equation}
Next, we turn to the combined nonlinearity. By using the identity $\nabla(|u|^{q})=\frac{q}{q-2}\nabla(|u|^{q-2})|u|^{2}$, we note that integration by parts yields that
\begin{align*}
	B_{R}^{2}[u]&=:-\big<u,[\mu|u|^{q-2}u,\nabla\varphi_{R}\cdot\nabla+\nabla\cdot\nabla\varphi_{R}]u\big>=2\mu\int_{\mathbb{R}^{N}}|u|^{2}\nabla\varphi_{R}\nabla(|u|^{q-2})\\
	&=-\mu\frac{2(q-2)}{q}\int_{\mathbb{R}^{N}}\Delta \varphi_{R}|u|^{q}.\\
\end{align*}
By the definition of $\varphi_{R}(r)$, we get that $\Delta\varphi_{R}(r)-N\equiv0$ for $r\leq R$, it follows that
\begin{align*}
B_{R}^{2}[u]&=\mu\frac{-2N(q-2)}{q}\int_{\mathbb{R}^{N}}|u|^{q}-\mu\frac{2(q-2)}{q}\int_{|x|\geq R}(\Delta \varphi_{R}-N)|u|^{q}\\	
&=\mu\frac{-2N(q-2)}{q}\int_{\mathbb{R}^{N}}|u|^{q}+O(R^{-\frac{(q-2)(N-1)}{2}}\|\Delta u\|_{2}^{\frac{q-2}{4}}),
\end{align*}
where the last term follows from the fact $\|\Delta\varphi_{R}(r)-N\|_{L^{\infty}}\lesssim1$ and applying the Strauss inequality, which gives us
$$\int_{|x|\geq R}|u|^{q}\lesssim\|u\|_{2}^{2}\|u\|_{L^{\infty}}^{q-2}\lesssim R^{-\frac{(q-2)(N-1)}{2}}\|u\|_{2}^{\frac{q+2}{2}}\|\nabla u\|_{2}^{\frac{q-2}{2}}\lesssim R^{-\frac{(q-2)(N-1)}{2}}\|u\|_{2}^{\frac{3q+2}{4}}\|\Delta u\|_{2}^{\frac{q-2}{4}}.$$
Similarly, we can also obtain that
\begin{align*}
B_{R}^{3}[u]&=\frac{-2N(p-2)}{p}\int_{\mathbb{R}^{N}}|u|^{p}-\frac{2(p-2)}{p}\int_{|x|\geq R}(\Delta \varphi_{R}-N)|u|^{p}\\	
&=\frac{-2N(p-2)}{p}\int_{\mathbb{R}^{N}}|u|^{p}+O(R^{-\frac{(p-2)(N-1)}{2}}\|\Delta u\|_{2}^{\frac{p-2}{4}}).
\end{align*}
Finally, from the above, we conclude that
\begin{align*}
\frac{d}{dt}M_{\varphi_{R}}[u(t)]&\leq2N(q-2)E_{p,q}(u)-(N(q-2)-8)\int_{\mathbb{R}^{N}}|\Delta u(t)|^{2}-\frac{2N(p-q)}{p}\int_{\mathbb{R}^{N}}|u(t)|^{p}\\
&+O(R^{-4}+R^{-2}\|\Delta u(t)\|_{2}+R^{-\frac{(q-2)(N-1)}{2}}\|\Delta u\|_{2}^{\frac{q-2}{4}}+R^{-\frac{(p-2)(N-1)}{2}}\|\Delta u\|_{2}^{\frac{p-2}{4}})\\
&=8Q_{p,q}(u(t))+O(R^{-4}+R^{-2}\|\Delta u(t)\|_{2}+R^{-\frac{(q-2)(N-1)}{2}}\|\Delta u\|_{2}^{\frac{q-2}{4}}+R^{-\frac{(p-2)(N-1)}{2}}\|\Delta u\|_{2}^{\frac{p-2}{4}}).
\end{align*}
This completes the proof.
\end{proof}
\subsection{Sobolev Subcritical case: $2+\frac{8}{N}\leq q<p<4^*$}

\begin{proof}[Proof of Theorem \ref{orbi}]
	By Theorem \ref{Thm1.1}, let $u\in S(c)$ be a ground state solution to \eqref{eq02} at the level $m_{p,q}(c)>0$. Since we work in the space	 $H^{2}_{rad}(\mathbb{R}^{N})$, then $u$ is radially symmetric. By Lemma \ref{uni}, we can have $\|u_{s}-u\|_{H^2}\rightarrow0$	and $E_{p,q}(u_{s})\rightarrow m_{p,q}(c)$ as $s\rightarrow1^+$. Then we set $\psi_{0}=u_{s}$. By Proposition \ref{lwp}, we know that there exists a unique solution $\psi(t)\in C([0,T);H_{rad}^{2}(\mathbb{R}^{N}))$ to \eqref{eq01} with the initial datum $\psi_{0}=u_{s}$, where $T$ is the maximal time of existence. In view of Definition \ref{inst}, to prove that $u$ is strongly unstable, we need to show that $\psi(t)$ blows up in a finite time. We divide the proof into five steps:

\noindent Step $1$: $\mathcal{M}_{p,q}(c)$ is nonempty, where $\mathcal{M}_{p,q}(c)$ is defined as follows:
\begin{equation}\label{eq602}
\mathcal{M}_{p,q}(c)=\{v\in S(c), E_{p,q}(v)<E_{p,q}(u),Q_{p,q}(v)<0\}.	
\end{equation}

By Lemma \ref{uni} and the conservation laws, it follows that $\psi_{0}=u_{s}\in\mathcal{M}_{p,q}(c)$ for any $s>1$, which means $\mathcal{M}_{p,q}(c)\not=\emptyset$.

\noindent Step $2$: We claim that $\mathcal{M}_{p,q}(c)$ is invariant under the flow of \eqref{eq01}.

Since $\psi_{0}\in \mathcal{M}_{p,q}(c)$, by Proposition \ref{lwp}, there exist $T>0$ and a unique solution $\psi(t)\in C([0,T);H_{rad}^{2}(\mathbb{R}^{N}))$ to \eqref{eq01} with the initial datum $\psi_{0}$. First, by conservation law, we know that $E_{p,q}(\psi(t))=E_{p,q}(\psi_{0})<E_{p,q}(u)$. Then we show that $Q_{p,q}(\psi(t))<0$. If not, by the continuity, there exists a $t_{0}\in(0,T)$ such that $Q_{p,q}(\psi(t_{0}))=0$. Hence
$$m_{p,q}(c)\leq E_{p,q}(\psi(t_{0}))<E_{p,q}(u)=m_{p,q}(c),$$
which is a contradiction.

\noindent Step $3$: We claim that there exists a constant $a>0$ such that $Q_{p,q}(\psi(t))\leq-a$ for all $t\in[0,T)$.

For convenience, we shall write $\psi=\psi(t)$. Since $Q_{p,q}(\psi(t))<0$, by Lemma
\ref{uni}, there exists a $s_{\psi}\in (0,1)$ such that $Q_{p,q}(\psi_{s_{\psi}})=0$. Recalling that $s\rightarrow E_{p,q}(\psi_{s})$ is concave on $[s_{\psi},\infty)$, it follows that
$$E_{p,q}(\psi_{s_{\psi}})-E_{p,q}(\psi)\leq(s_{\psi}-1)\frac{d}{ds}E_{p,q}(\psi_{s})|_{s=1}=(s_{\psi}-1)Q_{p,q}(\psi).$$
Since $Q_{p,q}(\psi)<0$ and $m_{p,q}(c)\leq E_{p,q}(\psi_{s_{\psi}})$, by the conservation of energy, we have
$$Q_{p,q}(\psi)<(1-s_{\psi})Q_{p,q}(\psi)\leq E_{p,q}(\psi)-E_{p,q}(\psi_{s_{\psi}})\leq E_{p,q}(\psi_{0})-m_{p,q}(c):=-a<0.$$
\noindent Step $4$: Let us show that there exist a $\delta>0$ such that
\begin{equation}\label{eq603}
	\frac{d}{dt}M_{\varphi_{R}}[\psi(t)]\leq-\delta\|\Delta\psi(t)\|_{2}^{2}\quad\mathrm{for}\ t\in[0,T)
\end{equation}
and a $t_{0}>0$ such that
\begin{equation}\label{eq604}
M_{\varphi_{R}}[\psi(t)]<0,\quad\mathrm{for}\ t>t_{0}.
\end{equation}
First, we claim that there exists a constant $C$ such that
\begin{equation}
\|\Delta \psi\|_{2}^{2}\geq C,
\end{equation}
for $t\in[0,T)$. If not, there existed $t_{k}\subset[0,T)$ such that $\|\Delta \psi(t_{k})\|_{2}^{2}\rightarrow0$. By the Gagliardo-Nirenberg inequality and conservation law, we have
$$\|\psi(t_{k})\|_{q}^{q}\leq C^{q}_{N,q}\|\Delta\psi(t_{k})\|_{2}^{q\gamma_{q}}\|\psi(t_{k})\|_{2}^{q(1-\gamma_{q})}\rightarrow0$$
and
$$\|\psi(t_{k})\|_{p}^{p}\leq C^{p}_{N,p}\|\Delta\psi(t_{k})\|_{2}^{p\gamma_{q}}\|\psi(t_{k})\|_{2}^{p(1-\gamma_{p})}\rightarrow0$$
as $k\rightarrow\infty$. It follows that
$$Q_{p,q}(\psi(t_{k}))=\|\Delta\psi(t_{k})\|_{2}^{2}-\mu\gamma_{q}\|\psi(t_{k})\|_{q}^{q}-\gamma_{p}\|\psi(t_{k})\|_{p}^{p}\rightarrow0,$$
which is a contradiction to the fact $Q_{p,q}(\psi(t))\leq-a$ for all $t\in[0,T)$. Since the solution $\psi$ is radial, we recall that

\begin{align*}
\frac{d}{dt}M_{\varphi_{R}}[\psi(t)]&\leq8\|\Delta\psi(t)\|_{2}^{2}-8\mu\gamma_{q}\|\psi(t)\|_{q}^{q}-8\gamma_{p}\|\psi(t)\|_{p}^{p}\\
&+O(R^{-4}+R^{-2}\|\Delta\psi(t)\|_{2}+R^{-\frac{(q-2)(N-1)}{2}}\|\Delta \psi(t)\|_{2}^{\frac{q-2}{4}}+R^{-\frac{(p-2)(N-1)}{2}}\|\Delta\psi(t)\|_{2}^{\frac{p-2}{4}}),
\end{align*}
for all $t\in[0,T)$ and $R>1$. Then we apply the Young inequality to obtain for any $\xi>0$,

$$R^{-2}\|\Delta\psi(t)\|_{2}\leq C\xi\|\Delta\psi(t)\|^{2}_{2}+\xi^{-1}R^{-4},$$
$$R^{-\frac{(q-2)(N-1)}{2}}\|\Delta\psi(t)\|_{2}^{\frac{q-2}{4}}\leq C\xi\|\Delta \psi(t)\|^{2}_{2}+\xi^{-\frac{q-2}{10-q}}R^{-\frac{4(q-2)(N-1)}{10-q}},$$
and
$$R^{-\frac{(p-2)(N-1)}{2}}\|\Delta\psi(t)\|_{2}^{\frac{p-2}{4}}\leq C\xi\|\Delta\psi(t)\|^{2}_{2}+\xi^{-\frac{p-2}{10-p}}R^{-\frac{4(p-2)(N-1)}{10-p}}.$$
Therefore, we obtain
\begin{align*}
\frac{d}{dt}M_{\varphi_{R}}[\psi(t)]&\leq8\|\Delta\psi(t)\|_{2}^{2}-8\mu\gamma_{q}\|\psi(t)\|_{q}^{q}-8\gamma_{p}\|\psi(t)\|_{p}^{p}\\
&+C\xi\|\Delta\psi(t)\|^{2}_{2}+O(R^{-4}+\xi^{-\frac{q-2}{10-q}}R^{-\frac{4(q-2)(N-1)}{10-q}}+\xi^{-\frac{p-2}{10-p}}R^{-\frac{4(p-2)(N-1)}{10-p}}),
\end{align*}
for all $t\in[0,T)$, any $R>1$, any $\xi>0$ and some constant $C>0$. For simplicity, we denote $\mathcal{K}=R^{-4}+\xi^{-\frac{q-2}{10-q}}R^{-\frac{4(q-2)(N-1)}{10-q}}+\xi^{-\frac{p-2}{10-p}}R^{-\frac{4(p-2)(N-1)}{10-p}}$.

When $2+\frac{8}{N}<q<p<4^*$, we consider two cases. Before that, we set
\begin{equation}\label{eq2100}
\rho=\frac{4N(q-2)|E_{p,q}(\psi_{0})|+2}{N(q-2)-8}.	
\end{equation}

\noindent $Case\ 1:$ Assume that $t\in[0,T)$ such that
 \begin{equation}\label{eq1401}
\|\Delta\psi(t)\|^{2}_{2}\leq\rho.	
 \end{equation}
Then we have
$$
	\frac{d}{dt}M_{\varphi_{R}}[\psi(t)]\leq-8a+C\xi\rho+O(\mathcal{K})\quad\mathrm{for}\ t\in[0,T).
$$
Taking $\xi>0$ small enough and $R>1$ large enough depending on $\xi$, it follows that
\begin{equation}\label{eq1402}
	\frac{d}{dt}M_{\varphi_{R}}[\psi(t)]\leq-4a\leq\frac{-4a}{\rho}\|\Delta\psi(t)\|_{2}^{2}\quad\mathrm{for}\ t\in[0,T).
\end{equation}
where $a$ is determined in the last step.

\noindent $Case\ 2:$ Assume that $t\in[0,T)$ such that
 \begin{equation}\label{eq1401}
 \|\Delta\psi(t)\|^{2}_{2}>\rho.	
 \end{equation}
In this case by the conservation law, it follows that
\begin{align*}
Q_{p,q}(\psi(t))&=2N(q-2)E_{p,q}(\psi(t))-(N(q-2)-8)\|\Delta \psi(t)\|_{2}^{2}-\frac{2N(p-q)}{p}\|\psi(t)\|_{p}^{p}\\
&\leq\rho\frac{N(q-2)-8}{2}-1-\frac{N(q-2)-8}{2}\rho-\frac{N(q-2)-8}{2}\|\Delta \psi(t)\|_{2}^{2}.\\
\end{align*}
 Thus we get that
\begin{equation}\label{eq2101}
\frac{d}{dt}M_{\varphi_{R}}[\psi(t)]\leq-1-\frac{N(q-2)-8}{2}\|\Delta \psi(t)\|_{2}^{2}+C\xi\|\Delta u(t)\|^{2}_{2}+O(\mathcal{K}).	
\end{equation}
Since $2+\frac{8}{N}<q<p<4^*$ and $N\geq5$, we first choose $\xi>0$ small enough such that
$$\frac{N(q-2)-8}{2}-C\xi>\frac{N(q-2)-8}{4}.$$
Then taking $R>1$ large enough depending on $\xi$, we can get that
$$-1+O(\mathcal{K})\leq0.$$
Hence, we can obtain that
\begin{equation}\label{eq2200}
\frac{d}{dt}M_{\varphi_{R}}[\psi(t)]\leq	-\frac{N(q-2)-8}{4}\|\Delta \psi(t)\|_{2}^{2}.
\end{equation}

When $2+\frac{8}{N}=q<p<4^*$, we can still get
\begin{equation}\label{eq2000}
	\frac{d}{dt}M_{\varphi_{R}}[\psi(t)]\leq-\delta\|\Delta\psi(t)\|_{2}^{2}\quad\mathrm{for}\ t\in[0,T).
\end{equation}
In fact, we also need to distinguish into two cases. First we denote
\begin{equation}\label{eq230217}
\rho_{0}=\frac{8\beta_{2}\sigma|E_{p,q}(\psi_{0})|+C_{\eta}\frac{8\mu\sigma(\beta_{2}-\beta_{1})}{q}\|\psi_{0}\|_{2}^{2}+2}{\beta_{2}\sigma-4},	
\end{equation}
where $\beta_{1}=\frac{N(q-2)}{2}$, $\beta_{2}=\frac{N(p-2)}{2}$and $C_{\eta}$ will be chosen later. In particular, we can find a small constant $\epsilon$ such that $p>2+\frac{8+\epsilon}{N}$, it follows that $\sigma=\frac{8+\epsilon}{(p-2)N}<1$. If $\|\Delta\psi(t)\|^{2}_{2}\leq\rho_{0}$, we can get the same conclusion as above.

Now we assume that $\|\Delta\psi(t)\|^{2}_{2}>\rho_{0}$. Then by the conservation of energy, we can get
\begin{align*}
Q_{p,q}(\psi(t))&=8\|\Delta\psi(t)\|^{2}_{2}-\frac{4\mu\beta_{1}}{q}\|\psi(t)\|_{q}^{q}-\frac{4\beta_{2}}{p}\|\psi(t)\|_{p}^{p}\\
&=4\beta_{2}\sigma E_{p,q}(\psi_{0})+(8-2\beta_{2}\sigma)\|\Delta\psi(t)\|^{2}_{2}+(\frac{4\mu\beta_{2}\sigma}{q}-\frac{4\mu\beta_{1}}{q})\|\psi(t)\|_{q}^{q}+(\frac{4\beta_{2}\sigma}{p}-\frac{4\beta_{2}}{p})\|\psi(t)\|_{p}^{p}\\
&\leq4\beta_{2}\sigma E_{p,q}(\psi_{0})+(8-2\beta_{2}\sigma)\|\Delta\psi(t)\|^{2}_{2}+\frac{4\mu\sigma(\beta_{2}-\beta_{1})}{q}\|\psi(t)\|_{q}^{q}-\frac{4\beta_{2}(1-\sigma)}{p}\|\psi(t)\|_{p}^{p}.\\
\end{align*}
By the Young inequality, we can obtain that
\begin{align*}
\frac{4\mu\sigma(\beta_{2}-\beta_{1})}{q}\|\psi(t)\|_{q}^{q}&=\frac{4\mu\sigma(\beta_{2}-\beta_{1})}{q}\int_{\mathbb{R}^{N}}|\psi(t)|^{q}dx=\frac{4\mu\sigma(\beta_{2}-\beta_{1})}{q}\int_{\mathbb{R}^{N}}|\psi|^{\frac{2(p-q)}{p-2}}|\psi|^{q-\frac{2(p-q)}{p-2}}dx\\
&\leq\frac{4\mu\sigma(\beta_{2}-\beta_{1})}{q}\big[\frac{p-q}{p-2}\int_{\mathbb{R}^{N}}(\eta^{-\frac{q-\frac{2(p-q)}{p-2}}{p}}|\psi|^{\frac{2(p-q)}{p-2}})^{\frac{p-2}{p-q}}dx\\
&+\frac{q-\frac{2(p-q)}{p-2}}{p}\int_{\mathbb{R}^{N}}(\eta^{\frac{q-\frac{2(p-q)}{p-2}}{p}}|\psi|^{q-\frac{2(p-q)}{p-2}})^{\frac{p}{q-\frac{2(p-q)}{p-2}}}dx\big]\\
&\leq\frac{4\mu\sigma(\beta_{2}-\beta_{1})}{q}\big[C_{\eta}\int_{\mathbb{R}^{N}}|\psi(t)|^{2}dx+\eta|\psi(t)|^{p}dx\big].\\	
\end{align*}
Choosing $\eta$ small enough such that
$$\frac{4\mu\sigma(\beta_{2}-\beta_{1})}{q}\eta<\frac{4\beta_{2}(1-\sigma)}{p},$$
which, together with the definition of $\rho_{0}$, implies that
\begin{align*}
Q_{p,q}(\psi(t))&\leq4\beta_{2}\sigma E_{p,q}(\psi_{0})+(8-2\beta_{2}\sigma)\|\Delta\psi(t)\|^{2}_{2}+\frac{4\mu\sigma(\beta_{2}-\beta_{1})}{q}\|\psi(t)\|_{q}^{q}-\frac{4\beta_{2}(1-\sigma)}{p}\|\psi(t)\|_{p}^{p}\\	
&\leq4\beta_{2}\sigma E_{p,q}(\psi_{0})-(2\beta_{2}\sigma-8)\|\Delta\psi(t)\|^{2}_{2}+\frac{4\mu\sigma(\beta_{2}-\beta_{1})}{q}C_{\eta}\|\psi\|^{2}_{2}\\
&\leq\frac{\beta_{2}\sigma-4}{2}\rho_{0}-1-\frac{\beta_{2}\sigma-4}{2}\rho_{0}-\frac{3(\beta_{2}\sigma-4)}{2}\|\Delta\psi(t)\|^{2}_{2}.\\
\end{align*}
By a similar argument above, we can obtain that
\begin{equation}\label{eq2201}
\frac{d}{dt}M_{\varphi_{R}}[\psi(t)]\leq	-\frac{\beta_{2}\sigma-4}{2}\|\Delta \psi(t)\|_{2}^{2}.
\end{equation}	
Noticing that
$$|\frac{d}{dt}M_{\varphi_{R}}[\psi(t)]|\geq\min\{\frac{4a}{\rho},\frac{N(q-2)-8}{4},\frac{4a}{\rho_{0}},\frac{\beta_{2}\sigma-4}{2}\}\|\Delta \psi(t)\|_{2}^{2}$$
and
$$M_{\varphi_{R}}[\psi(t)]=M_{\varphi_{R}}[\psi(0)]+\int_{0}^{t}\frac{d}{ds}M_{\varphi_{R}}[\psi(s)]ds,$$
we get that there exists a $t_{0}>0$ such that for any $t>t_{0}$, \eqref{eq604} holds.

\noindent Step $5$: We now conclude that the solution $\psi(t)$ with the initial data $\psi_{0}=u_{s}$ blows up in finite time. Suppose by contradiction that $T=\infty$. Then integrating \eqref{eq603} on $[t_{0},t]$, we have that
\begin{equation}\label{eq605}
M_{\varphi_{R}}[\psi(t)]\leq-\int_{t_{0}}^{t}\delta\|\Delta \psi(t)\|_{2}^{2}dx.	
\end{equation}
Now using the Cauchy-Schwartz's inequality, we get from the definition of $M_{\varphi_{R}}[\psi(t)]$ that
\begin{equation}\label{eq606}
|M_{\varphi_{R}}[\psi(t)]|^{4}\leq C\|\Delta \psi(t)\|_{2}^{2}.	
\end{equation}
Thus combining \eqref{eq605} and \eqref{eq606}, we have for some $C>0$,
\begin{equation}\label{eq607}
M_{\varphi_{R}}[\psi(t)]\leq-C\int_{t_{0}}^{t}|M_{\varphi_{R}}[\psi(t)]|^4dx	
\end{equation}
Setting $k(t)=\int_{t_{0}}^{t}|M_{\varphi_{R}}[\psi(\tau)]|^{4}d\tau$, we get from \eqref{eq607} that
\begin{equation}\label{eq608}
k'(t)\geq Ck(t)^{4}. 	
\end{equation}
Then integrating differential inequality \eqref{eq608} on $[t_{0},t]$, we obtain that
$$M_{\varphi_{R}}[\psi(t)]\leq-Ck(t)\leq\frac{-Ck(t_{0})}{(1-3Ck^{3}(t_{0})(t-t_{0}))^{\frac{1}{3}}}.$$
This shows $M_{\varphi_{R}}[\psi(t)]\rightarrow-\infty$ as $t$ tends to some limit time $t^*$. Hence the solution $\psi(t,x)$ cannot exist globally. Recalling the blowup alternative we conclude our proof.
\end{proof}

\subsection{Sobolev critical case: $2+\frac{8}{N}\leq q<p=4^*$}

\begin{proof}[Proof of Theorem \ref{orbi2}]
	By Theorem \ref{Thm1.2}, let $u\in S(c)$ be a ground state solution to \eqref{eq02} at the level $m_{4^*,q}(c)>0$. Similar in Sobolev subcritical case, $u$ is radially symmetric since we work in the space $H^{2}_{rad}(\mathbb{R}^{N})$. By Lemma \ref{uni}, we can have $\|u_{s}-u\|_{H^2}\rightarrow0$	and $E_{4^*,q}(u_{s})\rightarrow m_{4^*,q}(c)$ as $s\rightarrow1^+$. Then we set $\psi_{0}=u_{s}$. By Proposition \ref{lwp}, we know that there exists a unique solution $\psi(t)\in C([0,T);H_{rad}^{2}(\mathbb{R}^{N}))$ to \eqref{eq01} with the initial datum $\psi_{0}=u_{s}$. According to Definition \ref{inst},  we need to show that $\psi(t)$ blows up in a finite time. As before, we also divide the proof into five steps:

\noindent Step $1$: $\mathcal{M}_{4^*,q}(c)$ is nonempty, where $\mathcal{M}_{4^*,q}(c)$ is defined as follows:
\begin{equation}\label{eq0105}
\mathcal{M}_{4^*,q}(c)=\{v\in S(c), E_{4^*,q}(v)<E_{4^*,q}(u),Q_{4^*,q}(v)<0\}.	
\end{equation}

\noindent Step $2$: $\mathcal{M}_{4^*,q}(c)$ is invariant under the flow of \eqref{eq01}.

\noindent Step $3$: There exists a constant $a^*>0$ such that $Q_{4^*,q}(\psi(t))\leq-a^*$ for all $t\in[0,T)$.

\noindent For the first three steps, we omit the proof since it turns out to be the same as that of the Sobolev subcritical case.

\noindent Step $4$: There exist a $\delta^*>0$ such that
\begin{equation}\label{eq01051}
	\frac{d}{dt}M_{\varphi_{R}}[\psi(t)]\leq-\delta^*\|\Delta\psi(t)\|_{2}^{2},\quad\mathrm{for}\ t\in[0,T)
\end{equation}
and a $t_{0}>0$ such that
\begin{equation}\label{eq01052}
M_{\varphi_{R}}[\psi(t)]<0,\quad\mathrm{for}\ t>t_{0}.
\end{equation}
First, we claim that there exists a costant $C$ such that
\begin{equation}\label{eq01053}
\|\Delta \psi\|_{2}^{2}\geq C,
\end{equation}
for $t\in[0,T)$. If not, there existed $t_{k}\subset[0,T)$ such that $\|\Delta \psi(t_{k})\|_{2}^{2}\rightarrow0$. By \eqref{eq230213}, the Gagliardo-Nirenberg inequality and conservation law, we have
$$\|\psi(t_{k})\|_{q}^{q}\leq C^{q}_{N,q}\|\Delta\psi(t_{k})\|_{2}^{q\gamma_{q}}\|\psi(t_{k})\|_{2}^{q(1-\gamma_{q})}\rightarrow0$$
and
$$\|\psi(t_{k})\|^{4^*}_{4^*}\leq\mathcal{S}^{\frac{4^*}{2}}\|\Delta\psi(t_{k})\|^{4^*}_{2}\rightarrow0$$
as $k\rightarrow\infty$. It follows that
$$Q_{4^*,q}(\psi(t_{k}))=\|\Delta\psi(t_{k})\|_{2}^{2}-\mu\gamma_{q}\|\psi(t_{k})\|_{q}^{q}-\|\psi(t_{k})\|_{4^*}^{4^*}\rightarrow0,$$
which is a contradiction to the fact $Q_{4^*,q}(\psi(t))\leq-a^*$ for $t\in[0,T)$. Then we recall that

\begin{align*}
\frac{d}{dt}M_{\varphi_{R}}[\psi(t)]&\leq8\|\Delta\psi(t)\|_{2}^{2}-8\mu\gamma_{q}\|\psi(t)\|_{q}^{q}-8\|\psi(t)\|_{4^*}^{4^*}\\
&+O(R^{-4}+R^{-2}\|\Delta\psi(t)\|_{2}+R^{-\frac{(q-2)(N-1)}{2}}\|\Delta\psi(t)\|_{2}^{\frac{q-2}{4}}+R^{-\frac{(4^*-2)(N-1)}{2}}\|\Delta\psi(t)\|_{2}^{\frac{4^*-2}{4}}),
\end{align*}
for all $t\in[0,T)$ and $R>1$. Then we apply the Young inequality to obtain for any $\xi>0$,

$$R^{-2}\|\Delta\psi(t)\|_{2}\leq C\xi\|\Delta\psi(t)\|^{2}_{2}+\xi^{-1}R^{-4},$$
$$R^{-\frac{(q-2)(N-1)}{2}}\|\Delta\psi(t)\|_{2}^{\frac{q-2}{4}}\leq C\xi\|\Delta\psi(t)\|^{2}_{2}+\xi^{-\frac{q-2}{10-q}}R^{-\frac{4(q-2)(N-1)}{10-q}},$$
and
$$R^{-\frac{(4^*-2)(N-1)}{2}}\|\Delta\psi(t)\|_{2}^{\frac{4^*-2}{4}}\leq C\xi\|\Delta\psi(t)\|^{2}_{2}+\xi^{-\frac{4^*-2}{10-4^*}}R^{-\frac{4(4^*-2)(N-1)}{10-4^*}}(N\geq6),$$
$$R^{-\frac{(4^*-2)(N-1)}{2}}\|\Delta\psi(t)\|_{2}^{\frac{4^*-2}{4}}=R^{-16}\|\Delta\psi(t)\|_{2}^{2}(N=5).$$
Therefore, we obtain
\begin{align*}
\frac{d}{dt}M_{\varphi_{R}}[\psi(t)]&\leq8\|\Delta\psi(t)\|_{2}^{2}-8\mu\gamma_{q}\|\psi(t)\|_{q}^{q}-8\|\psi(t)\|_{4^*}^{4^*}\\
&+C\xi\|\Delta\psi(t)\|^{2}_{2}+O(R^{-4}+\xi^{-\frac{q-2}{10-q}}R^{-\frac{4(q-2)(N-1)}{10-q}}+\xi^{-\frac{4^*-2}{10-4^*}}R^{-\frac{4(4^*-2)(N-1)}{10-4^*}}) (N \geq 6),
\end{align*}
\begin{align*}
	\frac{d}{dt}M_{\varphi_{R}}[\psi(t)]&\leq8\|\Delta\psi(t)\|_{2}^{2}-8\mu\gamma_{q}\|\psi(t)\|_{q}^{q}-8\|\psi(t)\|_{4^*}^{4^*}\\
	&+(C\xi+R^{-16})\|\Delta\psi(t)\|^{2}_{2}+O(R^{-4}+\xi^{-\frac{q-2}{10-q}}R^{-\frac{4(q-2)(N-1)}{10-q}})(N = 5),
\end{align*}
for all $t\in[0,T)$, any $R>1$, any $\xi>0$ and some constant $C>0$. For simplicity, we denote $\mathcal{K}=R^{-4}+\xi^{-\frac{q-2}{10-q}}R^{-\frac{4(q-2)(N-1)}{10-q}}+\xi^{-\frac{4^*-2}{10-4^*}}R^{-\frac{4(4^*-2)(N-1)}{10-4^*}}$ if $N \geq 6$ and $\mathcal{K}= R^{-4}+\xi^{-\frac{q-2}{10-q}}R^{-\frac{4(q-2)(N-1)}{10-q}}$ if $N = 5$.

When $2+\frac{8}{N}<q<p=4^*$, we consider two cases. Before that, we set
\begin{equation}\label{eq01054}
\rho^*=\frac{4N(q-2)|E_{4^*,q}(\psi_{0})|+2}{N(q-2)-8}.	
\end{equation}

\noindent $Case\ 1:$ Assume that $t\in[0,T)$ such that
 \begin{equation}\label{eq01055}
\|\Delta\psi(t)\|^{2}_{2}\leq\rho^*.	
 \end{equation}
Then we have
$$
	\frac{d}{dt}M_{\varphi_{R}}[\psi(t)]\leq-8a^*+(C\xi+R^{-16})\rho+O(\mathcal{K})\quad\mathrm{for}\ t\in[0,T).
$$
Taking $\xi>0$ small enough and $R>1$ large enough depending on $\xi$, it follows that
\begin{equation}\label{eq01056}
	\frac{d}{dt}M_{\varphi_{R}}[\psi(t)]\leq-4a^*\leq\frac{-4a^*}{\rho^*}\|\Delta\psi(t)\|_{2}^{2}\quad\mathrm{for}\ t\in[0,T),
\end{equation}
where $a^*$ is determined in Step 2.

\noindent $Case\ 2:$ Assume that $t\in[0,T)$ such that
 \begin{equation}\label{eq01057}
 \|\Delta\psi(t)\|^{2}_{2}>\rho^*.	
 \end{equation}
In this case by the conservation law, it follows that
\begin{align*}
Q_{4^*,q}(\psi(t))&=2N(q-2)E_{4^*,q}(\psi_{0})-(N(q-2)-8)\|\Delta \psi(t)\|_{2}^{2}-\frac{2N(4^*-q)}{4^*}\|\psi(t)\|_{4^*}^{4^*}\\
&\leq\rho^*\frac{N(q-2)-8}{2}-1-\frac{N(q-2)-8}{2}\rho^*-\frac{N(q-2)-8}{2}\|\Delta \psi(t)\|_{2}^{2}.\\
\end{align*}
 Thus we get that
\begin{equation}\label{eq01058}
\frac{d}{dt}M_{\varphi_{R}}[\psi(t)]\leq-1-\frac{N(q-2)-8}{2}\|\Delta \psi(t)\|_{2}^{2}+(C\xi+R^{-16})\|\Delta \psi(t)\|^{2}_{2}+O(\mathcal{K}).	
\end{equation}
Since $2+\frac{8}{N}<q<p=4^*$ and $N\geq5$, we first choose $\xi>0$ small enough and $R>1$ large enough depending on $\xi$ such that
$$C\xi + R^{-16} -\frac{N(q-2)-8}{2} < -\frac{N(q-2)-8}{4}.$$
Also, we can get that
$$-1+O(\mathcal{K})\leq0.$$
Hence, we can obtain that
\begin{equation}\label{eq01059}
\frac{d}{dt}M_{\varphi_{R}}[\psi(t)]\leq-\frac{N(q-2)-8}{4}\|\Delta \psi(t)\|_{2}^{2}.
\end{equation}

When $2+\frac{8}{N}=q<p=4^*$, we can still get
\begin{equation}\label{eq010510}
	\frac{d}{dt}M_{\varphi_{R}}[\psi(t)]\leq-\delta^*\|\Delta\psi(t)\|_{2}^{2}\quad\mathrm{for}\ t\in[0,T).
\end{equation}
In fact, we also need to distinguish into two cases. First we denote
$$\rho_{0}^*=\frac{8\beta^*_{2}\sigma^*|E_{4^*,q}(\psi_{0})|+C_{\eta}\frac{8\mu\sigma^*(\beta^*_{2}-\beta_{1})}{q}\|\psi_{0}\|_{2}^{2}+2}{\beta_{2}\sigma^*-4},$$
where $\beta^*_{2}=\frac{N(4^*-2)}{2}$and $C_{\eta}$ will be chosen later. In particular, we can find a small constant $\epsilon$ such that $4^*>2+\frac{8+\epsilon}{N}$, it follows that $\sigma^*=\frac{8+\epsilon}{(4^*-2)N}<1$. If $\|\Delta\psi(t)\|^{2}_{2}\leq\rho_{0}^*$, we can get the same conclusion as above.

Now we assume that $\|\Delta\psi(t)\|^{2}_{2}>\rho_{0}^*$. Then by the conservation of energy, we can get
\begin{align*}
Q_{4^*,q}(\psi(t))&=8\|\Delta\psi(t)\|^{2}_{2}-\frac{4\mu\beta_{1}}{q}\|\psi(t)\|_{q}^{q}-\frac{4\beta_{2}^*}{4^*}\|\psi(t)\|_{4^*}^{4^*}\\
&=4\beta_{2}^*\sigma^* E_{4^*,q}(\psi_{0})+(8-2\beta_{2}^*\sigma^*)\|\Delta\psi(t)\|^{2}_{2}+(\frac{4\mu\beta_{2}^*\sigma^*}{q}-\frac{4\mu\beta_{1}}{q})\|\psi(t)\|_{q}^{q}+(\frac{4\beta_{2}^*\sigma^*}{4^*}-\frac{4\beta_{2}^*}{4^*})\|\psi(t)\|_{4^*}^{4^*}\\
&\leq4\beta_{2}^*\sigma^* E_{4^*,q}(\psi_{0})+(8-2\beta_{2}^*\sigma^*)\|\Delta\psi(t)\|^{2}_{2}+\frac{4\mu\sigma^*(\beta_{2}^*-\beta_{1})}{q}\|\psi(t)\|_{q}^{q}-\frac{4\beta_{2}^*(1-\sigma^*)}{4^*}\|\psi(t)\|_{4^*}^{4^*}.\\
\end{align*}
By the Young inequality, we can obtain that
\begin{align*}
\frac{4\mu\sigma^*(\beta_{2}^*-\beta_{1})}{q}\|\psi(t)\|_{q}^{q}&=\frac{4\mu\sigma^*(\beta_{2}^*-\beta_{1})}{q}\int_{\mathbb{R}^{N}}|\psi(t)|^{q}dx=\frac{4\mu\sigma^*(\beta_{2}^*-\beta_{1})}{q}\int_{\mathbb{R}^{N}}|\psi|^{\frac{2(4^*-q)}{4^*-2}}|\psi|^{q-\frac{2(4^*-q)}{4^*-2}}dx\\
&\leq\frac{4\mu\sigma^*(\beta_{2}^*-\beta_{1})}{q}\big[\frac{4^*-q}{4^*-2}\int_{\mathbb{R}^{N}}(\eta^{-\frac{q-\frac{2(4^*-q)}{4^*-2}}{4^*}}|\psi|^{\frac{2(4^*-q)}{4^*-2}})^{\frac{4^*-2}{4^*-q}}dx\\
&+\frac{q-\frac{2(4^*-q)}{4^*-2}}{4^*}\int_{\mathbb{R}^{N}}(\eta^{\frac{q-\frac{2(4^*-q)}{4^*-2}}{4^*}}|\psi|^{q-\frac{2(4^*-q)}{4^*-2}})^{\frac{4^*}{q-\frac{2(4^*-q)}{4^*-2}}}dx\big]\\
&\leq\frac{4\mu\sigma^*(\beta_{2}^*-\beta_{1})}{q}\big[C_{\eta}\int_{\mathbb{R}^{N}}|\psi(t)|^{2}dx+\eta|\psi(t)|^{4^*}dx\big].\\	
\end{align*}
Choosing $\eta$ small enough such that
$$\frac{4\mu\sigma^*(\beta_{2}^*-\beta_{1})}{q}\eta<\frac{4\beta^*_{2}(1-\sigma^*)}{4^*},$$
which, together with the definition of $\rho_{0}^*$, implies that
\begin{align*}
Q_{4^*,q}(\psi(t))&\leq4\beta_{2}^*\sigma^* E_{4^*,q}(\psi_{0})+(8-2\beta_{2}^*\sigma^*)\|\Delta\psi(t)\|^{2}_{2}+\frac{4\mu\sigma^*(\beta_{2}^*-\beta_{1})}{q}\|\psi(t)\|_{q}^{q}-\frac{4\beta_{2}^*(1-\sigma^*)}{4^*}\|\psi(t)\|_{4^*}^{4^*}\\	
&\leq4\beta_{2}^*\sigma^* E_{4^*,q}(\psi_{0})-(2\beta_{2}^*\sigma^*-8)\|\Delta\psi(t)\|^{2}_{2}+\frac{4\mu\sigma^*(\beta_{2}^*-\beta_{1})}{q}C_{\eta}\|\psi\|^{2}_{2}\\
&\leq\frac{\beta_{2}^*\sigma^*-4}{2}\rho_{0}-1-\frac{\beta_{2}^*\sigma^*-4}{2}\rho_{0}-\frac{3(\beta_{2}^*\sigma^*-4)}{2}\|\Delta\psi(t)\|^{2}_{2}.\\
\end{align*}
By a similar argument above, we can obtain that
\begin{equation}\label{eq010511}
\frac{d}{dt}M_{\varphi_{R}}[\psi(t)]\leq	-\frac{\beta_{2}^*\sigma^*-4}{2}\|\Delta \psi(t)\|_{2}^{2}.
\end{equation}	
Noticing that
$$|\frac{d}{dt}M_{\varphi_{R}}[\psi(t)]|\geq\min\{\frac{4a^*}{\rho^*},\frac{N(q-2)-8}{4},\frac{4a^*}{\rho_{0}^*},\frac{\beta_{2}^*\sigma^*-4}{2}\}\|\Delta \psi(t)\|_{2}^{2}$$
and
$$M_{\varphi_{R}}[\psi(t)]=M_{\varphi_{R}}[\psi(0)]+\int_{0}^{t}\frac{d}{ds}M_{\varphi_{R}}[\psi(s)]ds,$$
we get that there exists a $t_{0}>0$ such that for any $t>t_{0}$, \eqref{eq604} holds.

\noindent Step $5$:
We now conclude that the solution $\psi(t)$ with the initial data $\psi_{0}=u_{s}$ blows up in finite time. Suppose by contradiction that $T=\infty$. Then integrating \eqref{eq603} on $[t_{0},t]$, we have that
\begin{equation}\label{eq6050}
M_{\varphi_{R}}[\psi(t)]\leq-\int_{t_{0}}^{t}\delta^*\|\Delta \psi(t)\|_{2}^{2}dx.	
\end{equation}
With this estimate at hand, in the same fashion as we did before
the solution $\psi(t,x)$ cannot exist globally. Recalling the blowup alternative we conclude our proof.
\end{proof}

\section{Appendix}\label{Apx}
 We consider the extremal function. Let
\begin{equation}
	U_{\epsilon}=R(\frac{\epsilon}{\epsilon^{2}+|x|^{2}})^{\frac{N-4}{2}},
\end{equation}
where $R=[(N-4)N(N^{2}-4)]^{\frac{N-4}{8}}$.
We know that the best constant $\mathcal{S}$ in the Sobolev embedding $H^{2}(\mathbb{R}^{N})\hookrightarrow L^{4^*}(\mathbb{R}^{N})$, $i.e.$
\begin{equation}\label{eq230213}
\mathcal{S}\|u\|^{2}_{4^*}\leq\|\Delta u\|^{2}_{2}	
\end{equation}
was attained by $U_{\epsilon}$. We also select a suitable constant in order that $U_{\epsilon}$ is the solution of the following critical equation
$$\Delta^{2}u=|u|^{4^*-2}u.$$
Then we have
\begin{equation}\label{eqA.1}
    \begin{cases}
   \|\Delta U_{\epsilon}\|_{2}^{2}=\|U_{\epsilon}\|_{4^*}^{4^*}\\
    \mathcal{S}\|U_{\epsilon}\|_{4^*}^{2}=\|\Delta U_{\epsilon}\|_{2}^{2}\\
    \end{cases}
\end{equation}
it follows that $\|U_{\epsilon}\|_{4^*}^{4^*}=\mathcal{S}^{\frac{N}{4}}$. Then let $\varphi(x)\in C_{0}^{\infty}(\mathbb{R}^{N})$  is a cut-off function satisfying
\begin{description}
	\item $(a)\ 0\leq\varphi(x)\leq1$ and $x\in\mathbb{R}^{N}$.
	\item $(b)\ \varphi(x)\equiv1$ in $B_{1}$, where $B_{s}$ denotes the ball in $\mathbb{R}^{N}$ of center at origin and radius $r$.
	\item $(c)\ \varphi(x)\equiv0$ in $\mathbb{R}^{N}\backslash B_{2}$.
\end{description}
\begin{lemma} \label{lem App.1}
Let $N\geq5$, setting $u_{\epsilon}(x)=U_{\epsilon}(x)\varphi(x)$ and denoting by $\omega$ the area of the unit sphere in $\mathbb{R}^{N}$. We have, for $N\geq5,$
\begin{description}
	\item [$(i)$]
$$\|\Delta u_{\epsilon}\|_{2}^{2}=\mathcal{S}^{\frac{N}{4}}+O(\epsilon^{N-4})\quad\mathrm{and}\quad \|u_{\epsilon}\|_{4^*}^{4^*}=\mathcal{S}^{\frac{N}{4}}+O(\epsilon^{N})$$
	\item [$(ii)$]For some constant $K>0$ and $N\geq5$,
    \[\|u_{\epsilon}\|_{q}^{q}=
    \begin{cases}
    K\epsilon^{N-\frac{(N-4)q}{2}}+o(\epsilon^{N-\frac{(N-4)q}{2}}),\qquad\qquad\qquad\qquad q>\frac{N}{N-4},\\
    K\epsilon^{\frac{N}{2}}|\log\epsilon|+O(\epsilon^{\frac{N}{2}}),\qquad\qquad\qquad\qquad\quad\qquad q=\frac{N}{N-4},\\
    K\epsilon^{\frac{(N-4)q}{2}}+o(\epsilon^{\frac{(N-4)q}{2}}),\qquad\qquad\qquad\qquad\qquad1\leq q<\frac{N}{N-4}.
    \end{cases}\]
 \item [$(iii)$] For some constant $K'>0$
    \[\|u_{\epsilon}\|_{2}^{2}=
    \begin{cases}
    K'\epsilon^{4}+o(\epsilon^{4}),\qquad\qquad\qquad\qquad\qquad\qquad\qquad N>8,\\
    K'\epsilon^{4}|\log\epsilon|+O(\epsilon^{4}),\qquad\qquad\qquad\ \qquad\quad\qquad N=8,\\
    K'\epsilon^{N-4}+o(\epsilon^{(N-4)}),\qquad\qquad\qquad\qquad\qquad5\leq N<8.
    \end{cases}\]
\end{description}
\end{lemma}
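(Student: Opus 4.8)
The plan is to reduce every quantity to an explicit radial integral against the unperturbed bubble $U_\epsilon$ and then exploit the exact scaling $U_\epsilon(x)=\epsilon^{-\frac{N-4}{2}}U_1(x/\epsilon)$, where $U_1(x)=R(1+|x|^2)^{-\frac{N-4}{2}}$. Under this scaling $\|\Delta U_\epsilon\|_2$ and $\|U_\epsilon\|_{4^*}$ are independent of $\epsilon$, and by \eqref{eqA.1} one has $\|U_\epsilon\|_{4^*}^{4^*}=\|\Delta U_\epsilon\|_2^2=\mathcal S^{\frac N4}$. This identity is the backbone of all three estimates.

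For $(i)$ I would split $\mathbb{R}^N=B_1\cup(B_2\setminus B_1)\cup(\mathbb{R}^N\setminus B_2)$. On $B_1$ we have $\varphi\equiv1$, hence $u_\epsilon=U_\epsilon$; on $\mathbb{R}^N\setminus B_2$ we have $u_\epsilon\equiv0$. Therefore
\[
\|\Delta u_\epsilon\|_2^2=\int_{B_1}|\Delta U_\epsilon|^2\,dx+\int_{B_2\setminus B_1}|\Delta(U_\epsilon\varphi)|^2\,dx,\qquad \int_{B_1}|\Delta U_\epsilon|^2\,dx=\mathcal S^{\frac N4}-\int_{\mathbb{R}^N\setminus B_1}|\Delta U_\epsilon|^2\,dx .
\]
Expanding $\Delta(U_\epsilon\varphi)=\varphi\Delta U_\epsilon+2\nabla U_\epsilon\cdot\nabla\varphi+U_\epsilon\Delta\varphi$ on the annulus and using that $U_\epsilon$ and its first and second derivatives are $O(\epsilon^{\frac{N-4}{2}})$ on $\{|x|\ge1\}$, the annulus contribution is $O(\epsilon^{N-4})$. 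The tail is handled by the change of variables $y=x/\epsilon$: $\int_{\mathbb{R}^N\setminus B_1}|\Delta U_\epsilon|^2\,dx=\int_{|y|\ge1/\epsilon}|\Delta U_1|^2\,dy$, and since $|\Delta U_1(y)|^2\sim|y|^{-2(N-2)}$ at infinity this is again $O(\epsilon^{N-4})$. This gives $\|\Delta u_\epsilon\|_2^2=\mathcal S^{\frac N4}+O(\epsilon^{N-4})$. The estimate for $\|u_\epsilon\|_{4^*}^{4^*}$ is analogous: $\|U_\epsilon\|_{4^*}^{4^*}=\mathcal S^{\frac N4}$ and the defect comes from $\{|x|\ge1\}$, where the same rescaling yields $O(\epsilon^N)$.

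For $(ii)$, the monotonicity $0\le\varphi\le1$ with $\varphi\equiv1$ on $B_1$ gives $\int_{B_1}U_\epsilon^q\le\|u_\epsilon\|_q^q\le\int_{B_2}U_\epsilon^q$, so it suffices to analyze $\int_{B_\rho}U_\epsilon^q$ for $\rho\in\{1,2\}$. Passing to polar coordinates and substituting $r=\epsilon s$ turns this into
\[
R^q\omega\,\epsilon^{N-\frac{(N-4)q}{2}}\int_0^{\rho/\epsilon}\frac{s^{N-1}}{(1+s^2)^{\frac{(N-4)q}{2}}}\,ds .
\]
The integrand decays like $s^{N-1-(N-4)q}$, so the behavior as $\epsilon\to0$ splits into exactly three regimes according to the sign of $N-(N-4)q$: if $q>\frac{N}{N-4}$ the integral converges to a finite positive constant (with an $O(\epsilon^{(N-4)q-N})$ tail), producing the power $\epsilon^{N-\frac{(N-4)q}{2}}$; if $q=\frac{N}{N-4}$ the integrand is $\sim s^{-1}$ at infinity, giving the factor $|\log\epsilon|$ together with the power $\epsilon^{\frac N2}$; and if $1\le q<\frac{N}{N-4}$ the integral diverges like $(\rho/\epsilon)^{N-(N-4)q}$, which after multiplication reproduces $\epsilon^{\frac{(N-4)q}{2}}$. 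In the last regime the leading contribution comes from $|x|\sim1$, so $K=R^q\int_{\mathbb{R}^N}|x|^{-(N-4)q}\varphi^q\,dx$ depends on $\varphi$ but stays strictly positive, which is all that is required. Finally, $(iii)$ is the special case $q=2$: since the relation of $2$ to $\frac{N}{N-4}$ flips at $N=8$, the three lines correspond respectively to $N>8$ (power $\epsilon^4$), $N=8$ (power $\epsilon^4|\log\epsilon|$), and $5\le N<8$ (power $\epsilon^{N-4}$).

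I expect the main obstacle to be the estimate in $(i)$ for $\|\Delta u_\epsilon\|_2^2$: one must control the cross terms $\nabla U_\epsilon\cdot\nabla\varphi$ and $U_\epsilon\Delta\varphi$ produced by the cutoff on $B_2\setminus B_1$ and confirm that both this annulus contribution and the tail of $\int|\Delta U_\epsilon|^2$ are genuinely $O(\epsilon^{N-4})$ and not of lower order. The remaining delicate point is isolating the sharp $|\log\epsilon|$ behavior in the critical case $q=\frac{N}{N-4}$ of $(ii)$, where the coefficient of the logarithm must be separated cleanly from the bounded remainder.
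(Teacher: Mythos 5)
Your proposal is correct and takes essentially the same route as the paper: the exact identities \eqref{eqA.1} for the bubble, cutoff errors confined to the annulus where $U_\epsilon$ and its first and second derivatives are $O(\epsilon^{\frac{N-4}{2}})$, rescaled radial integrals with the three regimes determined by the sign of $N-(N-4)q$, and dominated convergence in the subcritical regime, with $(iii)$ read off as the case $q=2$; your geometric splitting $B_1$/annulus/exterior and the sandwich $\int_{B_1}U_\epsilon^q\le\|u_\epsilon\|_q^q\le\int_{B_2}U_\epsilon^q$ are just reorganizations of the paper's four-term expansion of $\Delta(U_\epsilon\varphi)$ and its decomposition $\varphi^q=1+(\varphi^q-1)$. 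The one point to keep explicit in a full write-up is that the sandwich alone cannot pin the constant when $1\le q<\frac{N}{N-4}$ (the $\rho=1$ and $\rho=2$ bounds carry different leading constants), which you correctly resolve by passing, as the paper does, to the dominated-convergence limit $K=R^q\int_{\mathbb{R}^N}\varphi^q|x|^{-(N-4)q}\,dx>0$.
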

\begin{remark}
	Let $0\leq r\leq R$, then
	$$\int_{r\leq|x|\leq R}f(x)dx=\int_{r}^{R}\int_{\sum}f(rx')r^{n-1}dx'dr,$$
	where $\sum=\{x'\in\mathbb{R}^{N},|x'|=1\}$ and $f(x)$ is a continuous function.
\end{remark}
\begin{proof}
$(i)$
\[u_{\epsilon}=\varphi U_{\epsilon}=
\begin{cases}
    U_{\epsilon}, \qquad\qquad x\in B_{1}\\
    0\qquad\qquad\quad x\in\mathbb{R}^{N}\backslash B_{2},\\
    \end{cases}\]
By some calculations, we have
\begin{equation}\label{eq1234}
\Delta u_{\epsilon}=\Delta U_{\epsilon}\varphi+U_{\epsilon}\Delta\varphi+2\nabla U_{\epsilon}\cdot\nabla\varphi.	
\end{equation}
And more accurately, we have
$$\nabla U_{\epsilon}=R\epsilon^{\frac{N-4}{2}}(4-N)\frac{x}{(\epsilon^2+|x|^{2})^{\frac{N-2}{2}}}$$
and
$$\Delta U_{\epsilon}=R\epsilon^{\frac{N-4}{2}}(4-N)\big[\frac{2}{(\epsilon^2+|x|^{2})^{\frac{N-2}{2}}}+(2-N)\frac{|x|^{2}}{(\epsilon^2+|x|^{2})^{\frac{N}{2}}}\big].$$

Passing to radial coordinates and making a change of variable, we get
\begin{align*}
\int_{\mathbb{R}^{N}}|\Delta U_{\epsilon}|^{2}dx&=\epsilon^{N-4}R^{2}(4-N)^{2}\int_{\mathbb{R}^{N}}\frac{4}{(\epsilon^2+|x|^{2})^{N-2}}+(2-N)^{2}\frac{|x|^{4}}{(\epsilon^2+|x|^{2})^{N}}+4(2-N)\frac{|x|^{2}}{(\epsilon^2+|x|^{2})^{N-1}}dx\\	
&=\epsilon^{N-4}R^{2}(4-N)^{2}\omega\int_{0}^{\infty}\frac{4r^{N-1}}{(\epsilon^2+r^{2})^{N-2}}+(2-N)^{2}\frac{r^{N+3}}{(\epsilon^2+r^{2})^{N}}+4(2-N)\frac{r^{N+1}}{(\epsilon^2+r^{2})^{N-1}}dr\\
&=R^{2}(4-N)^{2}\omega\int_{0}^{\infty}\frac{4r^{N-1}}{(1+r^{2})^{N-2}}+(2-N)^{2}\frac{r^{N+3}}{(1+r^{2})^{N}}+4(2-N)\frac{r^{N+1}}{(1+r^{2})^{N-1}}dr.
\end{align*}
While for $\Delta u_{\epsilon}$, recalling \eqref{eq1234}, we have
$$\|\Delta u_{\epsilon}\|^{2}_{2}=\int_{\mathbb{R}^{N}}\big|\Delta U_{\epsilon}\varphi+U_{\epsilon}\Delta\varphi+2\nabla U_{\epsilon}\cdot\nabla\varphi\big|^{2}dx.$$
And it can be decomposed into four parts $I_{1}(\epsilon)$, $I_{2}(\epsilon)$, $I_{3}(\epsilon)$ and $I_{4}(\epsilon)$.
First, we deal with
$$I_{1}(\epsilon)=\int_{\mathbb{R}^{N}}| U_{\epsilon}\Delta\varphi||2\nabla U_{\epsilon}\cdot\nabla\varphi|+|\Delta U_{\epsilon}\varphi||2\nabla U_{\epsilon}\cdot\nabla\varphi|+|\Delta U_{\epsilon}\varphi||U_{\epsilon}\Delta\varphi|dx.$$
Since $\varphi(x)\in C_{0}^{\infty}(\mathbb{R}^{N})$ is cut-off function satisfying $(a),\ (b)$ and $(c)$, there exists constants $K',K''>0$ such that $|\nabla\varphi|\leq K'$ and $|\Delta\varphi|\leq K''$ in $B_{2}\backslash B_{1}$. It follows that the integral in $I_{1}(\epsilon)$ is converging and $I_{1}(\epsilon)=O(\epsilon^{N-4})$.
For the second part, we have
$$I_{2}(\epsilon)=\int_{\mathbb{R}^{N}}|\Delta\varphi|^{2}|U_{\epsilon}|^{2}dx=\epsilon^{N-4}R^{2}\int_{B_{2}\backslash B_{1}}\frac{|\Delta\varphi|^{2}}{(\epsilon^{2}+|x|^{2})^{N-4}}dx.$$
By the definition of $\varphi(x)$, similarly, we can easily get that the integral in $I_{2}(\epsilon)$ is converging and $I_{2}(\epsilon)=O(\epsilon^{N-4})$. While for the third part,
$$I_{3}(\epsilon)=4\int_{\mathbb{R}^{N}}|\nabla U_{\epsilon}|^2|\nabla\varphi\big|^{2}dx=4\epsilon^{N-4}R^{2}(4-N)^{2}\int_{B_{2}\backslash B_{1}}\frac{|x|^2|\nabla\varphi|^2}{(\epsilon^2+|x|^{2})^{N-2}}dx.$$
Similarly, according to the definition of $\varphi(x)$, it follows that the integral in $I_{3}(\epsilon)$ is converging and $I_{3}(\epsilon)=O(\epsilon^{N-4})$.
Finally, we concentrate on
 \begin{align*}
I_{4}(\epsilon)&=\epsilon^{N-4}R^{2}(4-N)^{2}\int_{\mathbb{R}^{N}}\frac{4|\varphi|^{2}}{(\epsilon^2+|x|^{2})^{N-2}}+(2-N)^{2}\frac{|x|^{4}|\varphi|^{2}}{(\epsilon^2+|x|^{2})^{N}}+4(2-N)\frac{|x|^{2}|\varphi|^{2}}{(\epsilon^2+|x|^{2})^{N-1}}dx\\
&=K_{1}(\epsilon)+K_{2}(\epsilon)+K_{3}(\epsilon).	
\end{align*}
Passing to radial coordinates, we get
$$K_{1}(\epsilon)=\epsilon^{N-4}R^{2}(4-N)^{2}\omega\int_{0}^{2}\frac{4|\varphi(r)|^{2}r^{N-1}}{(\epsilon^2+|r|^{2})^{N-2}}dr$$	
that can be decomposed as
\begin{align*}
K_{1}(\epsilon)&=\epsilon^{N-4}R^{2}(4-N)^{2}\omega\int_{0}^{2}\big[\frac{4r^{N-1}(|\varphi(r)|^{2}-1)}{(\epsilon^2+|r|^{2})^{N-2}}+\frac{4r^{N-1}}{(\epsilon^2+|r|^{2})^{N-2}}\big]dr\\
&=L_{1}(\epsilon)+L_{2}(\epsilon).\\
\end{align*}
Since $\varphi(r)\equiv1$ on $[0,1]$, the integral in $L_{1}(\epsilon)$ is converging and $L_{1}(\epsilon)=O(\epsilon^{N-4})$. Then by making a change of variable, we rewrite $L_{2}(\epsilon)$ as
$$L_{2}(\epsilon)=R^{2}(4-N)^{2}\omega\int_{0}^{\frac{2}{\epsilon}}\frac{4r^{N-1}}{(1+r^{2})^{N-2}}dr.$$
Hence we know that
$$K_{1}(\epsilon)=R^{2}(4-N)^{2}\omega\int_{0}^{\frac{2}{\epsilon}}\frac{4r^{N-1}}{(1+r^{2})^{N-2}}dr+O(\epsilon^{N-4}).$$
Similarly, we can also obtain
$$K_{2}(\epsilon)=R^{2}(4-N)^{2}(2-N)^{2}\omega\int_{0}^{\frac{2}{\epsilon}}\frac{r^{N+3}}{(1+r^{2})^{N}}dr+O(\epsilon^{N-4})$$
and
$$K_{3}(\epsilon)=4R^{2}(4-N)^{2}(2-N)\omega\int_{0}^{\frac{2}{\epsilon}}\frac{r^{N+1}}{(1+r^{2})^{N}}dr+O(\epsilon^{N-4})$$
Therefore, as $\epsilon\rightarrow0$, we get that
\begin{align*}
\|\Delta u_{\epsilon}\|_{2}^{2}&=R^{2}(4-N)^{2}\omega	\int_{0}^{\frac{2}{\epsilon}}\big[\frac{4r^{N-1}}{(1+r^{2})^{N-2}}+\frac{(2-N)^{2}r^{N+3}}{(1+r^{2})^{N}}+\frac{4(2-N)r^{N+1}}{(1+r^{2})^{N}}\big]dr+O(\epsilon^{N-4})\\
&=\|\Delta U_{\epsilon}\|_{2}^{2}+O(\epsilon^{N-4}).\\
\end{align*}

Then we deal with the second part of $(i)$. By simple calculations, we have
\begin{align*}
	\|u_{\epsilon}\|_{4^*}^{4^*}&=R^{4^*}\int_{\mathbb{R}^{N}}\varphi^{4^*}(x)(\frac{\epsilon}{\epsilon^{2}+|x|^{2}})^{N}dx\\
&=R^{4^*}\int_{\mathbb{R}^{N}}  \varphi^{4^*}(x)\frac{\epsilon^{N}}{(\epsilon^{2}+|x|^{2})^{N}}dx.
\end{align*}
Passing to radial coordinates, we get
$$\|u_{\epsilon}\|_{4^*}^{4^*}=R^{4^*}\omega\epsilon^{N}\int_{0}^{2}\varphi^{4^*}(r)\frac{r^{N-1}}{(\epsilon^{2}+r^{2})^{N}}dr$$
that can be decomposed as
\begin{align*}
\|u_{\epsilon}\|_{4^*}^{4^*}&=R^{4^*}\omega\epsilon^{N}\int_{0}^{2}\frac{(\varphi^{4^*}(r)-1)r^{N-1}}{(\epsilon^{2}+r^{2})^{N}}dr+R^{4^*}\omega\epsilon^{N}\int_{0}^{2}\frac{r^{N-1}}{(\epsilon^{2}+r^{2})^{N}}dr\\
&=I_{1}(\epsilon)+I_{2}(\epsilon).\\
\end{align*}
Since $\varphi(r)\equiv1$ on $[0,1]$, the integral in $I_{1}(\epsilon)$ is converging and thus $I_{1}(\epsilon)=O(\epsilon^{N})$. Now by making a change of variable, we rewrite $I_{2}(\epsilon)$ as
$$I_{2}(\epsilon)=R^{4^*}\omega\int_{0}^{\frac{2}{\epsilon}}\frac{r^{N-1}}{(1+r^{2})^{N}}dr.$$

Then we notice that
$$\|U_{\epsilon}\|_{4^*}^{4^*}=R^{4^*}\int_{\mathbb{R}^{N}}\frac{\epsilon^{N}}{(\epsilon^{2}+|x|^{2})^{N}}dx.$$
Passing to radial coordinates, we obtain
$$\|U_{\epsilon}\|_{4^*}^{4^*}=R^{4^*}\omega\int_{0}^{\infty}\frac{r^{N-1}}{(1+r^{2})^{N}}dr.$$
As $\epsilon\rightarrow0$, we find that $I_{2}(\epsilon)\rightarrow\|U_{\epsilon}\|_{4^*}^{4^*}=\mathcal{S}^{\frac{N}{4}}$. Hence we know that $\|u_{\epsilon}\|_{4^*}^{4^*}=\mathcal{S}^{\frac{N}{4}}+O(\epsilon^{N})$.

$(ii)$
Here we concentrate on point $(ii)$. We have
$$\|u_{\epsilon}\|_{q}^{q}=R^{q}\int_{\mathbb{R}^{N}}\varphi^{q}(x)(\frac{\epsilon}{\epsilon^{2}+|x|^{2}})^{\frac{(N-4)q}{2}}dx.$$
passing radial coordinates, we get
 $$\|u_{\epsilon}\|_{q}^{q}=R^{q}\omega\epsilon^{\frac{(N-4)q}{2}}\int_{0}^{2}\frac{\varphi^{q}(r)r^{N-1}}{(\epsilon^{2}+r^{2})^{\frac{(N-4)q}{2}}}dr$$
 that can be decomposed as
 \begin{align*}
 \|u_{\epsilon}\|_{q}^{q}&=R^{q}\omega\epsilon^{\frac{(N-4)q}{2}}\int_{0}^{2}\frac{(\varphi^{q}(r)-1)r^{N-1}}{(\epsilon^{2}+r^{2})^{\frac{(N-4)q}{2}}}dr+R^{q}\omega\epsilon^{\frac{(N-4)q}{2}}\int_{0}^{2}\frac{r^{N-1}}{(\epsilon^{2}+r^{2})^{\frac{(N-4)q}{2}}}dr\\
 &=I_{1}(\epsilon)+I_{2}(\epsilon).\\
 \end{align*}
 Since $\varphi(r)\equiv1$ on $[0,1]$, the integral in $I_{1}(\epsilon)$ is converging and thus $I_{1}(\epsilon)=O(\epsilon^{\frac{(N-4)q}{2}})$. Now by making a change of variable, we rewrite $I_{2}(\epsilon)$ as
$$I_{2}(\epsilon)=R^{q}\omega\epsilon^{N-\frac{(N-4)q}{2}}\int_{0}^{\frac{2}{\epsilon}}\frac{r^{N-1}}{(1+r^{2})^{\frac{(N-4)q}{2}}}dr.$$
The integral is converging, as $\epsilon\rightarrow0$, to a finite value if and only if $q>\frac{N}{N-4}$. Hence we have that, for some constant $K>0$,
$$I_{2}(\epsilon)=K\epsilon^{N-\frac{(N-4)q}{2}}+o(\epsilon^{N-\frac{(N-4)q}{2}}).$$
And recording that $I_{1}(\epsilon)=O(\epsilon^{\frac{(N-4)q}{2}})$, we have
$$\|u_{\epsilon}\|_{q}^{q}=K\epsilon^{N-\frac{(N-4)q}{2}}+o(\epsilon^{N-\frac{(N-4)q}{2}}).$$
Now assuming that $q=\frac{N}{N-4}$, and proceeding as before, we get that
 \begin{align*}
 \|u_{\epsilon}\|_{q}^{q}&=R^{q}\omega\epsilon^{\frac{N}{2}}\int_{0}^{2}\frac{(\varphi^{q}(r)-1)r^{N-1}}{(\epsilon^{2}+r^{2})^{\frac{N}{2}}}dr+R^{q}\omega\epsilon^{\frac{N}{2}}\int_{0}^{2}\frac{r^{N-1}}{(\epsilon^{2}+r^{2})^{\frac{N}{2}}}dr\\
 &=I_{1}(\epsilon)+I_{2}(\epsilon).\\
 \end{align*}
with $I_{1}(\epsilon)=O(\epsilon^{\frac{N}{2}})$. Also,
$$I_{2}(\epsilon)=R^{q}\omega\epsilon^{\frac{N}{2}}\int_{0}^{\frac{2}{\epsilon}}\frac{r^{N-1}}{(1+r^{2})^{\frac{N}{2}}}dr=R^{q}\omega\epsilon^{\frac{N}{2}}(|\log\epsilon|+O(1)).$$
Summarizing, for $q=\frac{N}{N-4}$ we obtain that
$$\|u_{\epsilon}\|_{q}^{q}=R^{q}\omega\epsilon^{\frac{N}{2}}|\log\epsilon|+O(\epsilon^{\frac{N}{2}}).$$
It remains to study the case $2\leq q<\frac{N}{N-4}$. We can observe that for all $r>0$,
$$\lim_{\epsilon\rightarrow0}\frac{\varphi^{q}(r)r^{N-1}}{(\epsilon^{2}+r^{2})^{\frac{N-4}{2}q}}=\frac{\varphi^{q}(r)}{r^{(N-4)q-(N-1)}}$$
and also that, for all $\epsilon>0$, for some constant $C$
$$\big|\frac{\varphi^{q}(r)r^{N-1}}{(\epsilon^{2}+r^{2})^{\frac{N-4}{2}q}}\big|\leq\frac{C}{r^{(N-4)q-(N-1)}}\in L^{2}([0,2]).$$
Then from lebesgue's theorem, we can get that
\begin{align*}
\|u_{\epsilon}\|_{q}^{q}&=R^{q}\omega\epsilon^{\frac{(N-4)q}{2}}\int_{0}^{2}\frac{\varphi^{q}(r)r^{N-1}}{(\epsilon^{2}+r^{2})^{\frac{(N-4)q}{2}}}dr\\
&=\epsilon^{\frac{(N-4)q}{2}}R^{q}\omega\int_{0}^{2}\frac{\varphi^{q}(r)}{r^{(N-4)q-(N-1)}}dr+o(\epsilon^{\frac{(N-4)q}{2}}).\\	
\end{align*}
This completes the proof.
\end{proof}

\section*{Acknowledgements}
X. J. Chang was supported by NSFC(11971095).


\end{document}